\title{An efficient randomized homotopy method to approximate eigenpairs of tensors}
\author{Paul Breiding\thanks{MPI MiS Leipzig, breiding@mis.mpg.de, partially funded by DFG grant BU 1371/2-2.}}
\date{}
\newcommand{\HC}{\mathbb{C}}
\newcommand{\HN}{\mathbb{N}}
\newcommand{\HP}{\mathbb{P}}
\newcommand{\HR}{\mathbb{R}}
\newcommand{\HS}{\mathbb{S}}
\newcommand{\Hf}{\mathbbm{f}}
\newcommand{\cC}{\mathcal{C}}
\newcommand{\cD}{\mathcal{D}}
\newcommand{\cE}{\mathcal{E}}
\newcommand{\cF}{\mathcal{F}}
\newcommand{\cG}{\mathcal{G}}
\newcommand{\cH}{\mathcal{H}}
\newcommand{\cL}{\mathcal{L}}
\newcommand{\cO}{\mathcal{O}}
\newcommand{\cP}{\mathcal{P}}
\newcommand{\cU}{\mathcal{U}}
\newcommand{\cZ}{\mathcal{Z}}
\newcommand{\set}[1]{\left\{#1\right\}}
\newcommand{\cset}[2]{\left\{#1\mid #2\right\}}
\newcommand{\norm}[1]{\left\lvert #1 \right\rvert}
\newcommand{\Norm}[1]{\left\lVert #1 \right\rVert}
\newcommand{\rank}{\mathrm{rk}\,}
\newcommand{\id}{\mathrm{id}}
\renewcommand{\d}{\mathrm{d}}
\newcommand{\myspaceS}{ \HS(\HC^n)\times\HC}
\newcommand{\myspaceC}{ \HC^n\times\HC}
\newcommand{\myspaceCwoz}{ (\HC^n\backslash\set{0})\times\HC}
\DeclareMathOperator*{\Prob}{\mathrm{Prob}}
\DeclareMathOperator*{\mean}{\mathbb{E}}
\newcommand{\polspace}{\cH^n}
\newcommand{\hV}{{\bf V}}
\newcommand{\hW}{{\bf W}}
\newcommand{\bSigma}{\mathbf{\Sigma}}
\newcommand{\hatSigma}{\widehat{\mathbf{\Sigma}}}
\newcommand{\hatV}{\widehat{{\bf V}}}
\newcommand{\hatW}{\widehat{{\bf W}}}
\newcommand{\hatpi}{\widehat{\pi}}
\newcommand{\hatmu}{\widehat{\mu}}
\newcommand\restr[2]{\ensuremath{\left.#1\right|_{#2}}}
\newcommand{\circleradius}{0.4 cm}
\newcommand{\axislength}{0.6 cm}
\newcommand{\deriv}[2]{\mathrm{D}_{#2}#1\,}
\newcommand{\derivk}[2]{\mathrm{D}^k_{#2}#1\,}
\newcommand{\Tang}[2]{\mathrm{T}_{#1} {#2}}
\numberwithin{equation}{section}
\numberwithin{figure}{section}
\theoremstyle{plain}
\newcounter{numbering} \numberwithin{numbering}{section}
\newtheorem{thm}[numbering]{Theorem}
\newtheorem{lemma}[numbering]{Lemma}
\newtheorem{prop}[numbering]{Proposition}
\newtheorem{cor}[numbering]{Corollary}
\theoremstyle{definition}
\newtheorem{dfn}[numbering]{Definition}
\theoremstyle{remark}
\newtheorem{rem}[numbering]{Remark}
\theoremstyle{plain}
\newtheorem*{claim}{Claim}
\crefname{equation}{}{}
\crefname{equation}{}{}
\crefname{figure}{Figure}{Figures}
\crefname{section}{Section}{Sections}
\crefname{lemma}{Lemma}{Lemmata}
\crefname{prop}{Proposition}{Propositions}
\crefname{thm}{Theorem}{Theorems}
\crefname{cor}{Corollary}{Corollaries}
\crefname{dfn}{Definition}{Definitions}
\crefname{notation}{Notations}{Notations}
\crefname{rem}{Remark}{Remarks}
\crefname{claim}{Claim}{claims}
\begin{document}
\maketitle
\noindent{\bf AMS subject classifications:} 15A18, 15A69; 15A72; 60D05; 65F15

\smallskip

\noindent{\bf Key words:}
tensors, eigenvalues, eigenvalue distribution,
computational algebraic geometry

\begin{abstract}
Let $A \in (\HC^{n})^{\otimes p}$ be a complex tensor of order $p$.
The pair $(v,\eta)\in\HC^n\times \HC$ is called an \emph{h-eigenpair} of $A$, if $v\neq0$ and it satisfies $Av^{p-1}=\eta^{p-2} v$, where~$Av^{p-1}$ is the contraction of $A$ by $v$ in all but the first modes. We describe a randomized algorithm to compute approximations of h-eigenpairs of complex tensors. Assuming random input, the average number of arithmetic operations it performs is polynomially bounded in the input size.
\end{abstract}
\section{Introduction}
Eigenpairs of matrices are a fundamental concept in many branches of mathematics and beyond. With the dawn of modern tensor analysis this concept has been generalized to \emph{eigenpairs of tensors} \cite{qi1,qi2,Lim2006}. Applications of eigenpairs of tensors range from mathematical disciplines like Markov Chains, Graph Theory, Optimization, Numerical Analysis and Geometry to other fields like Diffusion Tensor Imaging, Image Authenticity, Solid Mechanics or Quantum Physics. See \cite{Lim2013} for an overview. Note that in this reference several definitions of eigenpairs are mentioned, each generalizing its own particular aspect of matrix eigenpairs to higher order. In this article, however, we use the following definition: Suppose that $A=(a_{i_1,\ldots,i_{p}}) \in(\HC^{n})^{\otimes p}$ is an order $p$ tensor over the complex numbers. Then,~$A$ can be see as a \emph{multilinear map}
	\begin{equation*}
	M_A:  (\HC^n)^{\times (p-1)} \to \HC^n,\;\; (v_1,\ldots,v_{p-1})\mapsto  \begin{bmatrix} A(e_1,v_1,\ldots,v_{p-1}),&\ldots,& A(e_n,v_1,\ldots,v_{p-1})\end{bmatrix},
\end{equation*}
where $e_1,\ldots,e_n$ is the standard basis in $\HC^n$. A pair $(v,\lambda)\in(\HC^n\backslash\set{0})\times \HC$  with $\Norm{v}=1$ is called an \emph{eigenpair} of the tensor $A$, if
	\begin{equation}\label{1}
	Av^{p-1} := M_A(v,\ldots,v) = \lambda v.
\end{equation}\enlargethispage{\baselineskip}
If $(v,\lambda)$ is an eigenpair of $A$, we call $v$ \emph{eigenvector} and $\lambda$ \emph{eigenvalue} of $A$. The normalization is meaningful, because otherwise all $\lambda\in\HC$ would be eigenvalues of $A$.

In all of the aforementioned applications a main obstacle is the computational complexity of the tensor eigenpair problem \cite{Hastad1990, lim3}. One particular example of this is in the case of symmetric tensors: The \emph{spectral norm} of a symmetric tensor is a measure for entanglement of Bosons in quantum information theory; see \cite{entanglement} and \cite[Section 2.4]{Friedland2016}. For real symmetric tensors it is given by the largest real eigenvalue \cite{rank-one-approx,2011arXiv1110.5689F}, which is hard to compute. For this reason Friedland and Wang in \cite{Friedland2016} consider computing the spectral norm \emph{approximately}. They write the following.
\begin{quote}
\emph{"[Computing the spectral norm] will require solving a system of polynomial equations [...]. We conjecture [...] that [...] for most symmetric [tensors], the spectral norm~[...] is polynomial-time computable.}
\end{quote}
Motivated by this, we consider the computational complexity of numerically computing eigenpairs of general tensors (not only symmetric tensors). In this paper we show that from the numerical point of view the eigenpair problem is feasible. We provide a numerical algorithm to compute eigenpairs and this algorithm is efficient \emph{on the average}, meaning that we analyze the expected complexity of a randomized algorithm. Our main theorem is \cref{main_thm}, which informally can be stated as follows.
\begin{thm}[Informal version of \cref{main_thm}]
There is a randomized algorithm that on the average efficiently computes an eigenpair of a tensor.
\end{thm}

The basis of the present article is \cite{Armentano2015a}, in which the authors quote Demmel \cite[p. 22]{demmel}.
\begin{quote}
  \emph{
"So the problem of devising an algorithm [for the eigenvalue problem] that
is numerically stable and globally (and quickly!) convergent remains open."}
\end{quote}
The algorithm with which this problem is solved is given by Armentano in \cite{Armentano2014}. In that paper a condition number for the matrix eigenpair problem is defined and, by using this definition, in \cite{Armentano2015a} Armentano et al.\ provide a smoothed analysis for Armentano's algorithm. We extend their methods to general tensors $A\in(\HC^n)^{\otimes p}$, $p> 2$, solving Demmel's problem also for higher degrees (at least in the probabilistic understanding of 'quickly'). Our algorithm is a homotopy continuation algorithm that finds approximations of what we call \emph{h-eigenpairs} and that terminates almost surely.
The examination of complexity of this algorithm is by computing the expected value of the \emph{condition number of the eigenpair problem} (actually, of \emph{two} condition numbers; see \cref{sec:two_condition_numbers}). Thus, this work also contributes to \cite[Open Problem 14]{condition}:
\begin{quote}
  \emph{
"Provide probabilistic analyses of condition numbers for structured polynomial systems.\ [...]\ Such results would help to explain the success of numerical practice".}
\end{quote}
Previously, there have been proposed other algorithms, such as the Power Method \cite{Kolda2011}, to compute eigenpairs of tensors. However, the advantage of using homotopy continuation methods is twofold: The outputs are approximate zeros \cite{BSS}, i.e., points from where Newton's method converges quadratically fast to an actual solution, making it highly stable and, as mentioned above, we can provide a complexity analysis.

The attentative reader may ask why there is need to examine a homotopy method specifically for the eigenvalue problem. There already exists a thorough complexity analysis for homotopy continuation, see e.g., \cite[Part III]{condition} for a comprehensive study, and excellent implementations such as \cite{Bates}. The reason is that the aforementioned complexity analyses are based on the probabilistic examination of the \emph{condition number of polynomial equation solving} as defined in \cite{BSS} and with respect to this condition number the eigenpair problem is ill-posed; we will discuss this in detail in \cref{sec:1.2}. For the rest of this introductory section we now focus on explaining our approach.

It is convenient to see eigenpairs of tensors as eigenpairs of homogeneous polynomial systems as defined in \cite{distr}. To be precise, let $\cH_{n,d}$ denote the space of complex homogeneous polynomials of degree~$d$ in the~$n$ variables $X_1,\ldots,X_n$. Similar to \cref{1} we call $(v,\lambda)\in(\HC^n\backslash\set{0})\times \HC$ an eigenpair of $f\in(\cH_{n,d})^n$, if it satisfies the equation $f(v)=\lambda v$. There is a canonical surjective homomorphism $(\HC^{n})^{\otimes p}\to (\cH_{n,d})^n$ for $d=p-1$, called contraction map, that maps $A$ to~$f_A(X):=AX^d=M_A(X,\ldots,X)$, where $X=(X_1,\ldots,X_n)$. It is easy to see that
\begin{equation}\label{1.1}
Av^d=\lambda v, \text{ if and only if } f_A(v)=\lambda v.
\end{equation}
For any $s\in\HC^\times$, the pair $(v,\lambda)$ is an eigenpair of a polynomial system~$f$ if and only if $(sv,s^{d-1}\lambda)$ is an eigenpair of $f$. We call eigenpairs that are related in this way \emph{equivalent}. As already mentioned in \cite{sturmfels-cartwright} one can view equivalence classes of eigenpairs as elements in the \emph{weighted projective space} $\HP(1,\ldots,1,d-1)$. However, we lack a thorough analysis of this space and therefore avoid working with it. For this reason we aim at homogenizing the equation defining eigenpairs (although the following is not homogenizing in the classical sense we still find this naming appropriate): The \emph{punctured projective space} is defined as \begin{equation}\cP:=\HP(\HC^n\times \HC)\backslash\set{[0:1]}.\label{punctured_projective_space}\end{equation}
and we call $(v,\eta)\in\cP$ an \emph{h-eigenpair} of $f$, if $f(v)=\eta^{d-1} v$ (the name `h-eigenpair' is meant as an abbreviation of `homogeneous-equation-eigenpair'). Note that the equation defining h-eigenpairs is homogeneous of degree $d$. Using an auxiliary variable  $\ell$, we associate to $f$ the homogeneous polynomial system~$\cF_f\in(\cH_{n+1,d})^n$, where
\begin{equation}\label{dfn_F_f}
\cF_f:\myspaceC \to \HC^n, \;(X,\ell)\mapsto f(X)-\ell^{d-1}X.
\end{equation}
Clearly, $(v,\eta)$ is an h-eigenpair of $f$ if and only if $\cF_f(v,\eta)=0$. We say that $(w,\xi)\in\cP$ is an  \emph{approximate eigenpair} of $f$, if it satisfies two requirements: First, $(w,\xi)$ is an approximate zero of $\cF_f\in(\cH_{n+1,d})^n$ in the classical sense, see \cite[Section 14.1, Definition 1]{BSS}. Second, the associated zero~$(v,\eta)$ is an h-eigenpair of $f$, that is $(v,\eta)\in\cP$. If those two requirements are fullfilled, we call $(v,\eta)$ the \emph{associated eigenpair} of $(w,\xi)$. Note that the only difference from approximate eigenpairs to approximate zeros is that we exclude points in $\HP(\HC^n\times \HC)$ having the trivial solution~$[0:1]$ as associated zero. Trivial, because it yields the trivial and undesired solution $f(0)=1\cdot 0$. We conclude from \cite[Proposition~3.8]{distr} that the number of h-eigenpairs of a generic system $f$ is~$\cD(n,d):=d^n-1$.

Our model of complexity counts arithmetic operations, where taking square roots and drawing from a gaussian distribution are included. The main result is as follows. We will prove it at the end of \cref{sec:main_thm}.
\begin{thm}\label{main_thm}
There is a randomized algorithm that on input $f\in(\cH_{n,d})^n$ almost surely returns an approximate eigenpair of $f$. Its average number of arithmetic operations is $\cO(n^3+dn^\frac{5}{2}N^2)$, where $N=\dim_\HC \cH^n = n\binom{n+d-1}{n-1}$.
\end{thm}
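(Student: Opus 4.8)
The plan is to follow the blueprint of Beltrán--Pardo / Shub--Smale adaptive linear homotopy, but adapted to the eigenpair setting via the homogenization $\cF_f$ of \eqref{dfn_F_f}. The skeleton has three ingredients: (i) a way to sample a random starting system $g\in\cH_{n,d}^n$ together with one of its h-eigenpairs $(\zeta_0,\eta_0)$ exactly, i.e.\ with no path-tracking needed at the start; (ii) an adaptive-step-size Newton homotopy that connects $g$ to the input $f$ along the segment $f_t=(1-t)g+tf$ (lifted suitably to a path in $\sP$), whose number of steps is controlled by the integral of the square of a condition number $\mu$ along the path; and (iii) a probabilistic estimate showing that, for random input $f$, the expected value of this integral — hence the expected number of Newton steps — is polynomially bounded. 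Multiplying the expected number of steps by the cost of a single step (one evaluation of $\cF_f$ and its Jacobian plus one linear-algebra solve, which is where the $dnN$ and $n^{5/2}N^2$ terms come from) yields the stated bound.

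First I would set up the condition number $\munorm$ for the eigenpair problem: essentially the operator norm of the inverse of the (suitably normalized) derivative of $\cF_f$ restricted to the tangent space of $\sP$ at $(\zeta,\eta)$, mirroring \cite{armentano} in the case $d=1$. With this in hand, I would invoke the standard $\gamma$-theorem / approximate-zero machinery from \cite{BSS} to show that if the step sizes of the homotopy are chosen as a small constant divided by $(d^{3/2}\munorm^2\Norm{f_t})$, then an approximate zero at parameter $t$ is carried to an approximate zero at parameter $t+\Delta t$, and consequently the total number of steps along the path is $\cO\!\big(d^{3/2}\int_0^1 \munorm(f_t,(\zeta_t,\eta_t))^2\,\Norm{\dot f_t}\,\d t\big)$. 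This part is essentially bookkeeping: one transports the known homotopy-continuation estimates to the punctured projective space $\sP$, checking that the auxiliary variable $\Lambda$ does not spoil the geometry (this is exactly where $n\ge 2$, $d\ge 2$ are used).

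For the starting pair I would use the randomization trick: rather than fixing $g$, draw it from a distribution for which a random h-eigenpair can be produced directly — pick $(\zeta_0,\eta_0)\in\sP$ uniformly (w.r.t.\ the natural unitarily invariant measure) and then draw $g$ from the Gaussian distribution on $\cH_{n,d}^n$ conditioned on $\cF_g(\zeta_0,\eta_0)=0$; by the coarea formula this reproduces the correct Gaussian law on $g$ weighted by a Jacobian, and the weighting is exactly what makes the subsequent expectation computation tractable, as in the matrix case \cite{matrix_eigenpair}. The main obstacle — and the technical heart of the paper — is step (iii): bounding $\mean \int_0^1 \munorm(f_t,\cdot)^2\,\d t$. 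Here I expect to need a Smale-type "condition number theorem" relating $\munorm$ to the inverse distance to the ill-posed variety, plus an integral-geometry / coarea argument over the solution variety $\cV=\{(f,(\zeta,\eta)):\cF_f(\zeta,\eta)=0\}$ to convert the path integral into an expectation of $\munorm^2$ over $\cV$ under the natural measure, which one then estimates using the Gaussianity of $f$ and the known count $\cD(n,d)=d^n-1$ of h-eigenpairs. Controlling the tail of $\munorm^2$ (a second-moment bound) is the delicate point, since crude bounds give extra factors of $\cD(n,d)$, which is exponential in $n$; the polynomial bound in the theorem forces a more careful argument, presumably exploiting that the segment homotopy from a Gaussian $g$ to a Gaussian $f$ passes through Gaussian systems $f_t$ for each fixed $t$ (after rescaling), so the expectation of $\munorm^2$ at each fixed $t$ is governed by a single-system estimate rather than by a union bound over all $\cD(n,d)$ eigenpairs.
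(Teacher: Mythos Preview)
Your blueprint has the right three ingredients, but two of them are treated as ``bookkeeping'' when in fact they are where the work lies, and your description of both would fail as written.

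First, you take the affine segment $f_t=(1-t)g+tf$, but the paper runs the homotopy along the \emph{geodesic in} $\HS(\polspace)$, and this normalisation is load-bearing rather than cosmetic. The condition number $\mu$ that drives the step size is absolute, not scale-invariant in $f$ (see \cref{rem2}); the key geometric fact that h-eigenpairs stay bounded away from the forbidden point $[0:1]$ (\cref{useful_lemma2}: $|\eta|\le 1$) holds only when $\|f\|=1$; and both the higher-derivative estimate $\gamma\le \mu\,d^2\sqrt{2n}$ (\cref{Theorem_4.3}; note the exponent is $d^2\sqrt{n}$, not your $d^{3/2}$, because the term $\Lambda^{d-1}X$ costs an extra $\sqrt{n}$) and the Lipschitz bounds for $\mu$ (\cref{lipschitz_sec}) require $\|q_t\|=1$ throughout. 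Your aside that one must ``check that the auxiliary variable $\Lambda$ does not spoil the geometry'' is exactly the issue, and its resolution is this sphere restriction together with a second, scale-invariant condition number $\hatmu$ used only for the probabilistic analysis (\cref{condition_for_eigenpairs}(4), \cref{mu_expectation0}); a single $\munorm$ cannot do both jobs, cf.\ \cref{rem1}(5).

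Second, your sampling scheme---draw $(\zeta_0,\eta_0)$ uniformly, then condition $g$ on $\cF_g(\zeta_0,\eta_0)=0$---is essentially the density $\rho$, and the paper states outright that it cannot sample from $\rho$. The algorithm \textsc{draw-from-}$\rho^*$ instead (a) calls Beltr\'an--Pardo on a random system of $n-1$ equations in $n$ variables to get $\zeta$, (b) draws $\eta$ from a separate tailored density, and (c) uses a rejection step comparing $\|D\Hf(\zeta)+\eta^{d-1}P_\zeta^\perp\|_F$ to $\|D\Hf(\zeta)\|_F$; one then shows $\rho^*\le \rho/\Prob(\Omega^*)$ on the acceptance set and bounds $\Prob(\Omega^*)^{-1}=\cO(n)$. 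The $\cO(dnN)$ term in the theorem is precisely the cost of this sampling routine, not a per-step Newton cost as you suggest; each Newton step is $\cO(N)$, and $n^{5/2}N^2$ arises as $(\text{expected steps }=\cO(n^{5/2}N))\times(\text{cost per step }=\cO(N))$.
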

\begin{rem}
In \cref{approx_rem} we discuss that the algorithm of \cref{main_thm} not only approximates h-eigenpairs, but also approximates eigenvectors.
\end{rem}

\begin{figure}[t]
\begin{minipage}[c]{0.5\textwidth}
\begin{center}
\begin{tikzpicture}[scale=4,cap=round,>=latex]
\draw[->] (-\axislength,0cm) -- (\axislength,0cm) node[right,fill=white] {$\zeta$};
\draw[->] (0cm,0cm) -- (0cm,0.65cm) node[above,fill=white] {$\eta$};
\draw (0cm,0cm) -- (0:\circleradius) arc (0:180:\circleradius);
\draw[fill=gray] (0cm,0cm) -- (0:\circleradius) arc (0:45:\circleradius);
\draw[fill=gray] (0cm,0cm) -- (135:\circleradius) arc (135:180:\circleradius);
\draw[gray] (\circleradius,0) -- (\circleradius,\axislength) node[above=1pt] {$\underbrace{\Norm{v}=\Norm{w}=1}$};
\draw[->,red,thick] (0cm,0cm) -- (\circleradius,0.3cm)  node[right=1pt]{$(v,\eta)$};
\draw[->,blue,thick] (0cm,0cm) -- (\circleradius,0.5cm)  node[right=1pt]{$(w,\xi)$};
\draw[] (0cm,\circleradius)  node[above left=1pt]{$[0:1]$};
\filldraw (0cm,\circleradius)  circle(0.4pt);
\end{tikzpicture}
\end{center}
  \end{minipage}\hfill
  \begin{minipage}[c]{0.5\textwidth}
    \caption{\small If $\Norm{f}=1$, all of its h-eigenpairs are in the grey area. If \textcolor{blue}{$(v,\eta)$} is an approximation of \textcolor{red}{$(v,\eta)$} then the ratio $\norm{\eta}/\Norm{w}$ is bounded, so \textcolor{blue}{$(w,\xi)$} doesn't approximate the trivial solution $[0:1]$. See also \cref{real_geometry_prop}.
    } \label{fig1}
  \end{minipage}
\end{figure}
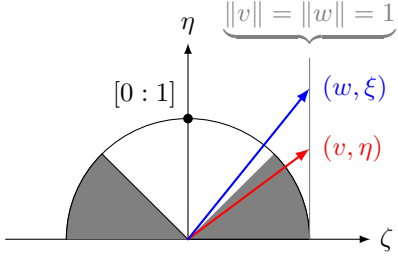

Next, we want to explain our basic ideas for constructing the algorithm that proves \cref{main_thm}. The problem of solving for h-eigenpairs consists of finding a zero of a (structured) system of $n$ homogeneous polynomials of degree~$d$ in the $n+1$ variables $X=(X_1,\ldots,X_n)$ and~$\ell$; i.e., $\cF_f(X,\ell) = f(X)-\ell^{d-1} X = 0.$
An algorithm to solve exactly those kind of polynomial systems is the \emph{adaptive linear homotopy algorithm} (ALH) introduced in \cite[Section~17]{condition}. It roughly works as follows. Let $\cG\in(\cH_{n+1,d})^n$ be a system of which a zero is known. If $\cF\in(\cH_{n+1,d})^n$ is the system to be solved, one connects $\cG$ with $\cF$ by a continuous path. This path is discretized and the zero of $\cG$ is continued along that discretized path using \emph{Newton's method}. The fineness of the discretization is determined by the \emph{condition number for polynomial equation solving}.

Algorithm ALH, however, is designed to solve general polynomial systems, not only systems of the special form, which $\cF_f(X,\ell)$ is of. It is clear that this algorithm must ignore the geometry of the h-eigenpair problem and this is what causes algorithm ALH to perform poorly on the space $\cset{\cF_f}{f\in (\cH_{n,d})^n}$; see the next subsection, where we discuss this issue in more detail. The crucial problem here is that Newton's method does not distinguish between eigenvectors and eigenvalues and simply sees eigenpairs as points in projective space. We need to make sure \emph{a priori} that we do not get too close to the trivial solution $[0:1]\in\HP^n$.

The idea is restricting the input space. Instead of having an algorithm that takes as input \emph{any} $f\in(\cH_{n,d})^n$, we only permit inputs from the unit sphere $\cset{f\in(\cH_{n,d})^n}{\Norm{f}=1}$, where~$\Norm{\cdot}$ denotes the norm induced by the Bombieri-Weyl inner product on~$(\cH_{n,d})^n$  (for details see \cref{sec:Preliminaries}). The following simple lemma is of great importance in the analysis of our analysis.
\begin{lemma}\label{useful_lemma2}
Let $f$ with $\Vert f \Vert = 1$ have the h-eigenpair $(v,\eta)$ and let $(v,\eta)\in\HC^n\times \HC$ also denote a representative. Then, $\norm{\eta}\leq \Norm{v}$.
\end{lemma}
\begin{proof}
Suppose $f=(f_1,\ldots,f_n)$. For all $i$ we have $\norm{f_i(v)}=\norm{\eta}^{d-1}\norm{v_i}$ and, by \cite[Lemma~16.5]{condition}, $\norm{f_i(v)}\leq \Norm{f_i} \Norm{v}^{d}$. Thus, $\norm{\eta}^{2(d-1)}\Norm{v}^2=\sum_{i=1}^n\norm{\eta}^{2(d-1)} \norm{v_i}^2 \leq \sum_{i=1}^n\Norm{f_i}^{2} \Norm{v}^{2d}=\Norm{v}^{2d}$,
which implies the claim.
\end{proof}

The fundamental importance of \cref{useful_lemma2} lies in implying that all h-eigenpairs of any $f$ of unit norm are sufficiently far away from the trivial solution $[0:1]$. Moreover, if we approximate carefully, all approximations of all h-eigenpairs of such an $f$ are sufficiently far away from~$[0:1]$; cf. \cref{fig1}. This restriction enables us to adapt the ideas from \cite[Section 16--17]{condition} and \cite{Armentano2015a,Armentano2014,Beltran2011,Buergisser2011,Armentano2015} to describe a homotopy method for the h-eigenpair problem.
\subsection{Why the homotopy method for general homogeneous polynomial systems is not a good choice}
\label{sec:1.2}

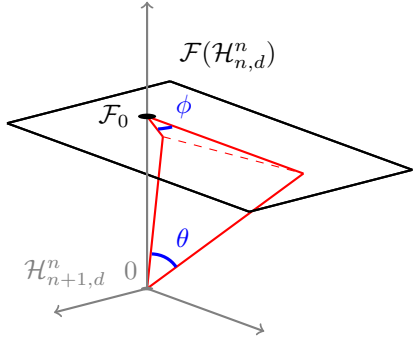
\begin{figure}[t]
\begin{minipage}[c]{0.5\textwidth}\begin{center}{
\tdplotsetmaincoords{72}{140}
\begin{tikzpicture}
		[scale = 0.8, tdplot_main_coords,
			affine/.style={thick,black},
			axis/.style={->,thick,gray}]

	\draw[-,red,thick] (0,0,0) -- (0.5,1,3);
	\draw[-,red,thick] (0,0,0) -- (0,4,3);
	\draw[-,red,thick] (0,0,3) -- (0.5,1,3);
	\draw[-,red,thick] (0,0,3) -- (0,4,3);
	\draw[dashed, red,thin] (0.5,1,3) -- (0,4,3);
	\foreach \x in {-1.5,2}
		\foreach \y in {-1.2,5}
		{
			\draw[affine] (\x,-1.2,3) -- (\x,5,3);
			\draw[affine] (-1.5,\y,3) -- (2,\y,3);
		}
	\draw[axis] (0,0,0) -- (2,0,0) node[anchor= north west]{};
	\draw[axis] (0,0,0) -- (0,3,0) node[anchor=west]{};
	\draw[axis] (0,0,0) -- (0,0,5) node[anchor=west]{};

	\fill[black,thick] (0,0,3) circle (0.15) node[anchor=east]{$\cF_0\;$};
	\fill[gray,thin] (0,0,0) circle (0.1) node[anchor=south east]{$0$};
	\fill[black,thin] (-1.5,-1.2,3) circle (0) node[anchor=south west]{$\cF(\cH_{n,d}^n)$};
	\fill[gray,thin] (2,-1,0) circle (0) node[anchor=south west]{$\cH_{n+1,d}^n$};

\tdplotdrawarc[blue,very thick]{(0,0,3)}{0.7}{65}{86}{anchor= south west}{$\phi$};
\tdplotsetthetaplanecoords{160}
\tdplotdrawarc[tdplot_rotated_coords, blue, very thick]{(0,0,0)}{0.6}{8}{58}
          {anchor=south west}{$\theta$}
\end{tikzpicture}}
\end{center}
 \end{minipage}\hfill
  \begin{minipage}[c]{0.45\textwidth}
\caption{\small The plot on the left shows a schematic picture of the affine space $\cF(\cH_{n,d}^n)$ within the ambient space $\cH_{n+1,d}^n$.
Although the angle \textcolor{blue}{$\phi$} between the two points in $\cF(\cH_{n,d}^n)$ is small the corresponding angle  \textcolor{blue}{$\theta$} with respect to $\cH_{n+1,d}^n$ can be large.
      \label{fig2}}
  \end{minipage}
\end{figure}

In this section we explain why the aforementioned algorithm ALH fails on solving the h-eigenpair problem efficiently. The complexity of algorithm ALH is measured in terms of the number of iterations it takes in $(\cH_{n+1,d})^n$ to get from one polynomial system to another. The length of each iteration step is proportional to the inverse of the \emph{condition number for polynomial equation solving}---the larger the condition number, the smaller the step size; see \cite[Algorithm~17.1]{condition}. The condition number is in fact defined with respect to the Bombieri-Weyl product on~$(\cH_{n+1,d})^n$; see \cite[(16.6)]{condition}. However, the inner product we choose for our structured set of polynomials
$\cF((\cH_{n,d})^n) = \cset{ \cF_f}{f\in(\cH_{n,d})^n} \subset (\cH_{n+1,d})^n$
is the inner product induced by the Bombieri-Weyl product on $(\cH_{n,d})^n$. Note that $\cF((\cH_{n,d})^n)$ is an affine linear subspace within~$(\cH_{n+1,d})^n$. This leads to situations where small angles in $\cF((\cH_{n,d})^n)$ may lead to large angles in the ambient space. In other words, although two systems $f$ and $g$ are close with respect to the angular measure in $\cF((\cH_{n,d})^n)$, the angular measure in $(\cH_{n+1,d})^n$ may detect them as being far from each other; c.f. \cref{fig2}. As a consequence, small perturbations in $f$ cause large perturbations in the zeros of $\cF_f$, which causes the condition number for polynomial equation solving to be large and ALH to choose an unnecessarily small step size. Nevertheless, small perturbations in $f$ do~\emph{not} cause large perturbations in the eigenvectors of $f$. This is why using the condition numbers from \cref{mu_def} for determining the step size in the homotopy methods yields an efficient algorithm for the eigenpair problem, while using the condition number for polynomial equation solving does not. In \cref{fig_experiment} we show the outcome of an experiment that compares both condition numbers.

\begin{figure}{t}
	\begin{center}
\includegraphics[scale = 0.4]{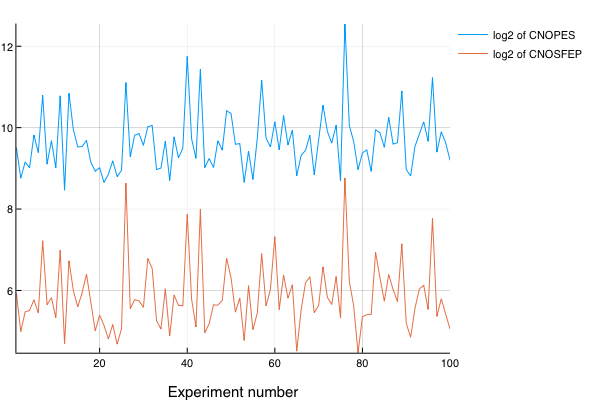}
\end{center}
\caption{The plot shows the outcome of following experiment: We sampled 100 i.i.d. samples from the uniform distribution on the unit sphere in $(\cH_{2,15})^2$. For each sample point $f$ we then computed an h-eigenpair $(v,\eta)$ using the \texttt{Julia} package \texttt{HomotopyContinuation.jl} \cite{BT}. The blue line in the plot shows the $\log_2$ of the condition number of polynomial equation solving (CNOPES) (see \cref{Proposition_1.4} for details) of each datum $(f,v,\eta)$. The red line in the plot shows the $\log_2$ of the condition number of solving for eigenpairs (CNPSFEP) as defined in \cref{mu_def} (1). The experiments are enumerated so we can compare the condition numbers for each datum. The plots were created using the package \texttt{Plots.jl}. \label{fig_experiment}}
\end{figure}

The following proposition quantifies \emph{how} ill-posed solving for eigenpairs is, when measuring its complexity in terms of the condition number for polynomial equation solving. We give a proof in \cref{sec_proof_ill_posed}. Note that the bound of $36\%$ in the proposition is obtained making very rough estimates in  the proof, so the actual probability should be even larger.
\begin{prop}\label{Proposition_1.4}
Let $f\in\cH_{n,d}^n$ and $(v,\eta)\in\cP$ be an h-eigenpair of $f$. Denote by $\mu(\cF_f,(v,\eta))$ the condition number for polynomial equation solving at $(\cF_f,(v,\eta))$ from \cite[Equation (16.6)]{condition}. If the coefficients of $f$ in the Bombieri-basis are standard normal random variables and $(v,\eta)$ is chosen uniformly at random from the $\cD(n,d)=d^n-1$ many h-eigenpairs of $f$, then
$$\Prob\set{\mu(\cF_f,(v,\eta)) > \frac{\sqrt{2}^{d-3}}{d}} > 0.36.$$
\end{prop}
In closing of this section, we remark that condition numbers are often used to detect singularities. The foregoing discussion suggests that for structured systems the respective structured condition number should be preferred for making trustworthy predictions.
\subsection{Organization}
The organization of the paper is as follows. We first establish in Section~\ref{se:geo-framework}
the geometric framework for the eigenpair problem.
Our concepts and notations are close to \cite[Section~16]{condition} and some of our results in \cite{distr} will be transferred to the h-eigenpair scenario. In Section \ref{sec:two_condition_numbers} we derive the two condition numbers of solving for h-eigenpairs using the geometric framework for condition numbers given in \cite[Section 14]{condition}. In Section  \ref{sec:main_thm} we describe the homotopy method that uses the condition number to approximate h-eigenpairs and construct from it a randomized algorithm, that proves \cref{main_thm}. In the remaining sections we give proofs.
\subsection{Acknowledgements}
The basis of this work was laid during the program "Algorithms and Complexity in Algebraic Geometry" at the Simons Institute for the Theory of Computing. We are grateful for the Simons Institute for the stimulating environment. The author was partially funded by the DFG grant BU 1371/2-2. This work is part of the author's PhD-thesis \cite{Breiding2017}. It would not have been possible without the support of Peter B\"urgisser. We are thankful for all the discussions and advice. Moreover, we want to thank Pierre Lairez for multiple discussions.
\section{Preliminaries and Notation}\label{sec:Preliminaries}
We denote h-eigenpairs with symbols $(v,\eta)$ and approximate eigenpairs with symbols $(w,\xi)$. Moreover, we will often use the same symbols for elements in $\cP$ and their representatives in~$(\HC^n\backslash\set{0})\times \HC$. On $\HC^n$ we choose the standard hermitian inner product $\langle x,y\rangle =x^T\overline{y}$. We furthermore denote the sphere in $\HC^n$ by $\HS(\HC^n):=\cset{x\in\HC^n}{\Norm{x}=1}$ and by $\HP(\HC^n)$ we denote the projective space over $\HC^n$. The real sphere is denoted by $\HS(\HR^n):=\cset{y\in\HR^n}{y^Ty=1}$. For $x,y\in\HC^n\backslash\set{0}$ we define $d_\HS(x,y)$ as the angle between the rays $\HR_{>0}x$ and $\HR_{>0}y$, where $x$ and $y$ are interpreted as elements in $\HR^{2n}$. Moreover, for $x,y\in\HP(\HC^n)$ we define the projective distance between $x$ and $y$ as  $d_\HP(x,y)$. In formulas:
$$\cos d_\HS(x,y)=  \frac{\Re(x^T\overline{y})}{\Norm{x}\Norm{y}}\quad\text{and}\quad \cos d_\HP(x,y)=\frac{\norm{x^T\overline{y}}}{\Norm{x}\Norm{y}}$$
It is well known that $d_\HS$ is the Riemannian distance on $\HS(\HC^n)$ and that $d_\HP$ is the Riemannian distance on $\HP(\HC^n)$. For a differentiable function between differentiable manifolds $f:A\to B$ we write $ \deriv{f}{x}$ for the derivative of $f$ at $x\in A$.

As in the introduction we denote by $\cH_{n,d}$ the space of homogeneous polynomials of degree~$d$ in $n$ variables.
The Bombieri-Weyl inner product on $\cH_{n,d}$ is defined as $\langle f,g\rangle = \sum_\alpha f_\alpha\overline{g_\alpha}$ for $f=\sum_\alpha f_\alpha \sqrt{\binom{d}{\alpha}}  X^\alpha$ and $g=\sum_\alpha g_\alpha \sqrt{\binom{d}{\alpha}}  X^\alpha$. For $f=(f_1,\ldots,f_n)$, $g=(g_1,\ldots,g_n)\in(\cH_{n,d})^n$ we define $\langle f,g\rangle:=\sum_{i=1}^n\langle f_i,g_i\rangle$. The Bombieri-Weyl norm is $\Norm{f}=\sqrt{\langle f,f\rangle}$.

In the following $n$ and $d$ are fixed and we abbreviate
$$\cH:= \cH_{n,d}.$$
For $x\in(\HC^n\backslash\set{0})$ let $Z(x):=\cset{f\in\cH^n}{f(x)=0}$. By \cite[Prop.~16.16]{condition} the space $\cH^n$ decomposes orthogonally as $\cH^n = C(x)\oplus L(x) \oplus R(x),$
where $R(x):=\cset{f\in\cH^n}{f(x)=0, \deriv{f}{x}=0}$ and $L(x):=R(x)^\perp \cap Z(x)$ and $C(x):=Z(x)^\perp$.

Let $W$ be a finite dimensional complex vector space with hermitian inner product and put $k:=\dim_\HC W$.
If $z\in W$ is distributed with density $\varphi_W(z) :=  \pi^{-k}\cdot \exp(-\lVert z\rVert^2),$ we say that $z$ is distributed according to the standard normal distribution on $W$ and write $z\sim N(W)$. If it is clear from the context which space is meant, we omit the subscript $W$ in $\varphi_W$.
\begin{lemma}\label{norm_inequality_gaussian}
Let $a,b\in \HC^m$ be fixed and $z\sim N_\HC(\HC^m)$. Then
\begin{enumerate}
	\item If $\Norm{a}\geq \Norm{b}$, we have
	$\Prob\set{ \Norm{z-a}\leq \Norm{z}} \leq \Prob\set{ \Norm{z-b}\leq \Norm{z}}.$
	\item For all $a$: $\Prob\limits\set{ \Norm{z-a}\leq \Norm{z}}\geq\frac{1}{\sqrt{\pi}}\,\frac{2}{\Norm{a} + \sqrt{\Norm{a}^2+8}}\, \exp\left(-\frac{\Norm{a}^2}{2}\right).$
\end{enumerate}
\end{lemma}
\begin{proof}
We have $\Norm{z-a}\leq \Norm{z}$, if and only if $\Norm{a}^2\leq 2\Re\langle z,a\rangle$. By unitary invariance we may assume that $a=(\Norm{a},0,\ldots,0)\in\HC^m$. Hence $\Re\langle z,a\rangle =\Norm{a} \Re\langle z,e_1\rangle$. Observe that $\Re\langle z,e_1\rangle$ is a real gaussian random variable with mean $0$ and variance $\frac{1}{2}$. This shows that
	\[\Prob\limits_{z\sim N_\HC(\HC^m)}\set{\Norm{z-a}\leq \Norm{z}}= \frac{1}{\sqrt{\pi}}\int_{\frac{\Norm{a}}{2}}^\infty \exp(-x^2) \d x.\]
From this equation we can deduce the first assertion. For the second assertion we use \cite[(7.1.13)]{abramowitz} on the right-hand side of the equation. This finishes the proof.
\end{proof}
For $1\leq i,j\leq m$ we denote by $A_{i,j}$ the $(i,j)$-entry of $A\in\HC^{m\times m}$ and by $A^{i,j}$ the matrix that is obtained from $A$ by removing the $i$-th row and the $j$-th column. The following is \cite[(39)]{Armentano2015a}.
\begin{lemma}\label{equation_random_matrix}
Let $A\sim N(\HC^{m\times m})$
Let $1\leq i\leq m$ and $A'$ be the matrix that is obtained by replacing the $i$-th row of $A$ by the (deterministic) $i$-th row of $\mean A$. Then, the absolute value of the expected determinant square can be expressed as $\mean \norm{\det A}^2 = \mean \norm{\det A'}^2 + \sum_{j=1}^m \mathrm{Var}(A_{i,j}) \mean\norm{\det A^{i,j}}^2.$
\end{lemma}
Recall that the Gamma function is defined by
$\Gamma(n):= \int_{t=0}^\infty t^{n-1} e^{-t} \d t$ for positive
real $n$. It is well known that $\Gamma(n+1)=n\Gamma(n)$ and,  if $n$ is a positive integer, that $\Gamma(n)=(n-1)!$.
\begin{lemma}\label{gamma_lemma} Let $n\geq 2$. We have $\Gamma(1/n)\leq \frac{\sqrt{\pi}}{2}n$ and $\Gamma(1+1/n)\leq \frac{\sqrt{\pi}}{2}$.
\end{lemma}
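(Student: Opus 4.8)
The two inequalities are not independent: the functional equation gives $\Gamma(1/n)=n\,\Gamma(1+1/n)$, so $\Gamma(1/n)\le\frac{\sqrt\pi}{2}n$ holds if and only if $\Gamma(1+1/n)\le\frac{\sqrt\pi}{2}$. The plan is therefore to prove the single inequality $\Gamma(1+1/n)\le\frac{\sqrt\pi}{2}$ for every integer $n\ge2$, from which the first statement follows immediately.

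Write $x:=1/n$, so that $x$ ranges over $(0,\tfrac12]$ and note that $\tfrac{\sqrt\pi}{2}=\Gamma(\tfrac32)=\Gamma(1+\tfrac12)$; the claim is thus exactly $\Gamma(1+x)\le\Gamma(1+\tfrac12)$ for all such $x$. The natural route is to prove the continuous statement $\Gamma(1+x)\le\Gamma(\tfrac32)$ for all $x\in(0,\tfrac12]$ by a monotonicity analysis of $t\mapsto\Gamma(t)$ on the interval $[1,\tfrac32]$: one computes the digamma function $\psi=\Gamma'/\Gamma$, uses its strict monotonicity to locate the unique critical point $t^\ast\in(1,2)$ of $\Gamma$ (where $\psi(t^\ast)=0$), splits $[1,\tfrac32]$ at $t^\ast$ into a decreasing piece and an increasing piece, and bounds $\Gamma$ on each piece by its endpoint values. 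A variant that avoids special functions is a direct estimate of $\Gamma(1+x)=\int_0^\infty t^{x}e^{-t}\,\d t$ against $\Gamma(\tfrac32)=\int_0^\infty t^{1/2}e^{-t}\,\d t$: split the $t$-integral at $t=1$, use $t^{x}\le t^{1/2}$ for $t\ge1$ (valid since $x\le\tfrac12$), and control $\int_0^1 t^{x}e^{-t}\,\d t$ separately.

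The main obstacle is precisely this last comparison, and it is a genuine one: as $x\to0^+$ one has $\Gamma(1+x)\to\Gamma(1)=1$, which \emph{exceeds} $\Gamma(\tfrac32)=\frac{\sqrt\pi}{2}\approx0.886$, and $\Gamma$ is therefore not bounded above by $\Gamma(\tfrac32)$ on all of $[1,\tfrac32]$. Consequently the clean monotonicity argument yields only the weaker bound $\Gamma(1+x)\le\max\{\Gamma(1),\Gamma(\tfrac32)\}=1$, i.e.\ $\Gamma(1+1/n)\le1$ and $\Gamma(1/n)\le n$. To recover the sharper constant $\frac{\sqrt\pi}{2}$ one would need to exploit that $n$ is an integer $\ge2$, so that $x\le\tfrac12$ stays away from $0$; but since $\Gamma(1+1/n)$ climbs back toward $1$ as $n$ grows, the stated inequality can only hold for a finite initial range of $n$ — it is an equality at $n=2$ and already slips at $n=3$, where $\Gamma(\tfrac13)\approx2.679>\tfrac{3\sqrt\pi}{2}\approx2.659$. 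A robust write-up should thus either restrict the constant $\frac{\sqrt\pi}{2}$ to the case $n=2$ and invoke $\Gamma(1+1/n)<1$ (equivalently $\Gamma(1/n)<n$) for $n\ge3$, or simply replace $\frac{\sqrt\pi}{2}$ by $1$; the downstream use in \cref{gammaeq1} is unaffected, since there one only needs $\Gamma(1/(d-1))+\Gamma(1+1/(d-1))/d+\tfrac12\le3d$, which already follows from $\Gamma(1/(d-1))\le d-1$.
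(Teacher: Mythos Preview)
Your analysis is correct, and in fact it exposes an error in the paper's own proof. The paper argues exactly along your lines: it uses the functional equation $\Gamma(1/n)=n\,\Gamma(1+1/n)$ and then invokes convexity of $\Gamma$ on $[1,3/2]$ to conclude $\Gamma(1+1/n)\le\max\{\Gamma(1),\Gamma(3/2)\}$. But the paper then asserts that this maximum equals $\sqrt{\pi}/2$, which is precisely the mistake you caught: $\Gamma(1)=1>\sqrt{\pi}/2=\Gamma(3/2)$, so the convexity bound only gives $\Gamma(1+1/n)\le 1$. Your counterexample at $n=3$ (where $\Gamma(4/3)\approx 0.893>\sqrt{\pi}/2\approx 0.886$) confirms the lemma is false as stated.

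Your diagnosis of the downstream impact is also right: in \cref{gammaeq1} the only place the lemma is used is to bound $\Gamma\!\big(\tfrac{1}{d-1}\big)+\tfrac{1}{d}\Gamma\!\big(1+\tfrac{1}{d-1}\big)+\tfrac12$ by a constant times $d$, and the weaker inequalities $\Gamma(1/(d-1))\le d-1$ and $\Gamma(1+1/(d-1))\le 1$ are amply sufficient for the conclusion $\le 3d$ there. So the fix you propose---replacing $\sqrt{\pi}/2$ by $1$ in the lemma---repairs the argument with no change to the rest of the paper.
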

\begin{proof}
We have $\Gamma(1/n) = n \Gamma(1+1/n)$. The Gamma function is convex, which shows that for $n\geq 2$ we have $\Gamma(1+1/n)\leq \max\set{\Gamma(1),\Gamma(3/2)}=~\sqrt{\pi}/2$.
\end{proof}
The \emph{upper incomplete Gamma function} is defined as $\Gamma(n,x):= \int_{x}^\infty t^{n-1} e^{-t} \d t,$ where $x\geq 0$.
The following is \cite[Lemma 2.5 and Proposition 2.6]{distr}.
\begin{lemma}\label{auxiliary_lemma}
\begin{enumerate}
\item
Let $x\geq 0$ and $n\geq 1$. Then $\Gamma(n,x) = (n-1)! \, e^{-x} \sum_{k=0}^{n-1} \frac{x^k}{k!}$.
\item We have $\mean_{A\sim N(\HC^{n\times n})} \lvert \det(A)\rvert^2 = n! = \Gamma(n-1).$
\item  For $t\in \HC$ we have $\mean_{A\sim N(\HC^{n\times n})} \lvert \det (A+tI_{n})\rvert^2 =  e^{\lvert t\rvert^2} \Gamma\left(n+1,\lvert t\rvert^2\right).$
	\end{enumerate}
\end{lemma}

For $k,l\in\HN$ let $\cL_k(\HC^l,\HC^n)$ denote the vector space of multilinear maps $(\HC^l)^k\to \HC^n$. The \emph{spectral norm} of $A\in\cL_k(\HC^l,\HC^n)$ is defined as
	\begin{equation}\label{spectral_norm}
	\Norm{A}:=\sup\limits_{v_1,\ldots,v_k\in\HS(\HC^l)} \Norm{A(v_1,\ldots,v_k)}.\end{equation}
If $A\in\cL_1(\HC^l,\HC^n)$ is linear we denote by $\Norm{A}_F$ its Frobenius norm. For all $A\in\cL_1(\HC^l,\HC^n)$ one has $\Norm{A}\leq\Norm{A}_F$. The spectral norm is \emph{submultiplicative}: For $A\in \cL_k(\HC^l,\HC^n), B\in\cL_1(\HC^n,\HC^{n'})$ we have $\Norm{BA}\leq \Norm{B}\Norm{A}$. Norms of projections play an important role in this work.

\begin{lemma}\label{projection_lemma}Let $u,v\in\HC^n\backslash\set{0}$.
\begin{enumerate}
\item
 Let $P:u^\perp\to v^\perp$ be the orthogonal projection from $u^\perp$ to $v^\perp$. We have $\Norm{P}\leq 1$. If $d_\HS(u,v)<\pi/2$, then $P$ is invertible and $\Norm{P^{-1}}= [\cos  d_\HS(u,v)]^{-1}$.
\item
Let $A\in\cL_1(\HC^{n},\HC^{n-1})$, such that $\ker A =\HC v$. Then $\restr{A}{v^\perp}^{-1}\restr{A}{u^\perp}: u^\perp\to v^\perp$ is the orthogonal projection $u^\perp\to v^\perp$.
\item
Let $A\in\cL_1(\HC^{n},\HC^{n-1})$, such that $\ker A =\HC v$. Then $\Vert\restr{A}{v^\perp}^{-1}\Vert\leq \Vert\restr{A}{u^\perp}^ {-1}\Vert$.
\end{enumerate}
\end{lemma}
\begin{proof}
For (1) see \cite[Lemma 16.40]{condition}. For (2) let $x\in u^\perp$ and $y\in v^\perp$, such that $\restr{A}{v^\perp}^{-1}\restr{A}{u^\perp}x=y$. Then $Ax=Ay$, which implies that $x-y\in\ker A = \HC v$.

To prove (3), write $\restr{A}{v^\perp}^{-1} = \restr{A}{v^\perp}^{-1}\restr{A}{u^\perp}\restr{A}{u^\perp}^ {-1}.$ Using the submultiplicativity of the spectral norm we obtain $\Vert\restr{A}{v^\perp}^{-1}\Vert\leq \Vert \restr{A}{v^\perp}^{-1}\restr{A}{u^\perp}\Vert\Vert\restr{A}{u^\perp}^ {-1}\Vert.$
From (1) and (2) we get $\Vert\restr{A}{v^\perp}^{-1}\restr{A}{u^\perp}\Vert\leq 1$, which shows the assertion.
\end{proof}
\section{Geometry of the eigenpair problem}\label{se:geo-framework}
Recall from \cref{dfn_F_f} that to $f\in\polspace$ we associate the system $\cF_f(X,\ell)=f(X)-\ell^{d-1}X$, where $\ell$ is an auxiliary variable. We also recall the definition of the punctured projective space from \cref{punctured_projective_space}.
\begin{dfn}\label{def_h_eigenpairs}
$\cP:=\HP(\HC^{n}\times\HC)\backslash\set{[0:\ldots:0:1]}$ is the punctured projective space.
\end{dfn}
Moreover, we recall the definition of h-eigenpairs from the introduction
\begin{dfn}
The pair $(v,\eta)\in\HP(\HC^n \times \HC)$ is an \emph{h-eigenpair} of $f$, if $\cF(v,\eta)=0$ and $(v,\eta)\in \cP$.
\end{dfn}
In the following
for $(f,(v,\eta))\in \cH^n\times\cP$ we write $(f,v,\eta):=(f,(v,\eta))$.
An essential notion to describe the geometry of  the h-eigenpair problem is the \emph{solution variety} $\hV$ defined as
\begin{equation}\label{solution_manifold_h}
\hV:=\cset{(f,v,\eta)\in \polspace\times \cP}{ \cF_f(v,\eta)=0}.
\end{equation}\enlargethispage{\baselineskip}
Furthermore, we define the set of well-posed h-eigenpair triples
$$\hW:=\cset{(f,v,\eta)\in \hV}{ \rank \deriv{\cF_f}{(v,\eta)}=n}$$
and the set of ill-posed triples~$\bSigma\,'=\hV\backslash \hW$.

The solution variety for the eigenpair problem $V = \cset{(f,v,\lambda)\in\polspace\times\myspaceS}{f(v)=\lambda v}$ was considered in \cite{distr}. We wish to compare $V$ and $\hV$ and therefore need an appropriate set of representatives for $\hV$. A common way of representing $\cP$ as a subset of~$\HP(\HC^{n}\times\HC)$ is to choose representatives in $\HS(\HC^{n+1})$. Here, however, we choose instead $\HS(\HC^{n})\times \HC$ as the set of representatives. This reflects the distinction between eigenvector and eigenvalue and enables a comparison between $V$ and $\hV$. Note that our choice is well-defined, because $[0:\ldots:0:1]\not \in \cP$. Henceforth, we define
\begin{align*}
\hatV&:=\cset{(f,v,\eta)\in \polspace\times \myspaceS}{ \cF_f(v,\eta)=0}\quad\text{and}\\
\hatW&:=\cset{(f,v,\eta)\in \hatV}{ \rank \deriv{\cF_f}{(v,\eta)}=n}
\end{align*}
and $\widehat{\bSigma}:=\hatV\backslash\hatW$.
It is easy to see that for $t\in\HC^\times$ we have
  \begin{equation}\label{scaling_f2}(f,v,\eta)\in \hatV \text{, if and only if } (t^{d-1}f,v, t\eta)\in \hatV.\end{equation}
Pictorially, the situation appears as follows.
		\begin{equation}
		\xymatrix{
		&\ar[dl]_{\pi_1}\ar[dr]^{\pi_2}\hV& & & &\ar[dl]_{\hatpi_1}\ar[dr]^{\hatpi_2}\hatV&\\
		\cH^n&&\cP&  & \cH^n&&\HS(\HC^n)\times \HC
		}. \label{projections}
		\end{equation}
Here, $\pi_1,\pi_2,\hatpi_1,\hatpi_2$ should all denote the respective projections. We define the \emph{extended eigendiscriminant variety} $\bSigma\subset\cH_d^n$ as the Zariski closure of $\pi_1(\bSigma\,')=\hatpi_1(\hatSigma\,')$.
    \begin{rem}
    In \cite[Section 3.4]{distr} we introduced the eigendiscriminant variety $\Sigma\subset\polspace$, whose name is taken from \cite[Section 4]{Abo2015}. From \cref{deriv_psi} below one can deduce that~$\bSigma$ is the union of $\Sigma$ and all $f\in\polspace$ that have $0$ as homogeneous eigenvalue.
    \end{rem}
The sets $V$ and $\hatV$ are connected by the surjective differentiable map
\begin{equation}\label{psi_h_eigenpairs}
\psi: \hatV \to V,\; (f,v,\eta)\mapsto (f,v,\eta^{d-1}),
\end{equation}
The following lemma is straight-forward to prove.
\begin{lemma}\label{deriv_psi}
Let $(f,v,\lambda)=\psi(f,v,\eta)$.
\begin{enumerate}
  \item The derivative of $\psi$ at $(f,v,\eta)$ is given by
    $\deriv{\psi}{(f,v,\eta)}(\dot{f},\dot{v},\dot{\eta}) = (\dot{f},\dot{v},(d-1)\eta^{d-2}\dot{\eta}).$
  \item We have $\deriv{\cF_f}{(f,v,\eta)} = \deriv{F_f}{(f,v,\eta^{d-1})}\;\deriv{\psi}{(f,v,\eta)}.$
\end{enumerate}
\end{lemma}
By combining \cref{deriv_psi} with \cite[Eq. (3.3)]{distr} (or simply by derivating $\cF(X,\ell)$) we get
  \begin{equation}\label{jacobian2}
  \deriv{\cF_{f}}{(v,\eta)}= \begin{bmatrix}\deriv{f}{v}-\eta^{d-1}I_n,& -(d-1)\eta^{d-2}v\end{bmatrix}.
\end{equation}
The following is implied by \cite[Proposition 3.8]{distr}.
\begin{prop}\label{prop_eigendiscriminant}
\begin{enumerate}
\item The set $\bSigma$ is a proper subvariety of $\cH$.
\item If $f\not\in \bSigma$, $f$ has $\cD(n,d)=d^n-1$ many h-eigenpairs.
\end{enumerate}
\end{prop}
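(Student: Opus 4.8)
The plan is to derive everything from \cite[Prop.~3.8]{distr}, which concerns the original eigenpair formulation (the system $F_f(X,\Lambda)=f(X)-\Lambda X$, the variety $V$, the discriminant $\Sigma$), and transport it along the map $\psi\colon\hatV\to V$, $(f,\zeta,\eta)\mapsto(f,\zeta,\eta^{d-1})$. I would first recall precisely what \cite[Prop.~3.8]{distr} gives: that $\Sigma\subsetneq\cH$ is a proper (Zariski-closed) subvariety, and that a system $f\notin\Sigma$ with $0$ not an eigenvalue has exactly $\#\{(v,\lambda)\in\sP\,:\,f(v)=\lambda v\}$ counted correctly, which in the eigenpair count is $d^n$ (including the possibility $\lambda=0$) — one then has to bookkeep the relation between eigenpairs $(v,\lambda)$ and h-eigenpairs $(\zeta,\eta)$ with $\eta^{d-1}=\lambda$ carefully. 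The key combinatorial point is the preimage structure of $\psi$ on the level of $\sP$: a fibre $\psi^{-1}(f,v,\lambda)$ over an eigenpair with $\lambda\neq 0$ consists of $d-1$ distinct h-eigenpairs (the $(d-1)$-st roots of $\lambda$, with $\zeta$ fixed), while the fibre over $\lambda=0$ is the single point $(f,v,0)$, but this is exactly the case that got thrown into $\hatSigma'$ via the term $\{(f,\zeta,\eta)\in\hatV:\eta=0\}$ identified in the excerpt. So for $f\notin\bSigma$ — where $\bSigma$ is defined as the Zariski closure of $\pi_1(\hatSigma')$, hence contains both $\Sigma$ and all $f$ having $0$ as eigenvalue — every h-eigenpair lies over an eigenpair with nonzero $\lambda$, and conversely every one of the $d^n-1$ eigenpairs with $\lambda\neq 0$ lifts to $d-1$ h-eigenpairs. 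That would naively give $(d^n-1)(d-1)$, which is wrong, so the count I actually need is: the number of eigenpairs with $\lambda\neq 0$ is $(d^n-1)/(d-1)\cdot$(something)? No — let me be careful: I would instead phrase \cite[Prop.~3.8]{distr} directly in terms of the \emph{number of eigenvalues}. An eigenvalue $\lambda\neq 0$ of a generic $f$ corresponds to a unique eigenvector up to scaling, so eigenpairs in $\sP$ with $\lambda\neq 0$ are in bijection with nonzero eigenvalues, of which (by \cite{distr}) there are $(d^n-1)/(d-1)$ when counted with the homogeneous weighting...

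Rather than gamble on the exact bookkeeping, the cleaner route — and the one I would take — is to \textbf{prove part (1) and part (2) largely independently}. For (1): $\bSigma$ is Zariski-closed by construction (it is a Zariski closure), so I only need properness, i.e. $\bSigma\neq\cH$. Since $\bSigma=\overline{\pi_1(\hatSigma')}=\Sigma\cup\{f:0\text{ is an eigenvalue of }f\}$ by the remark preceding the proposition, and $\Sigma\subsetneq\cH$ by \cite[Prop.~3.8]{distr}, it suffices to check that $\{f:0\text{ is an eigenvalue}\}$ is contained in a proper subvariety — equivalently, that the generic $f$ does not have $0$ as an eigenvalue. But $0$ is an eigenvalue of $f$ iff $f$ and the identity map $X\mapsto X$ share a common zero in $\HP^{n-1}$, i.e. iff $f$ vanishes at some common projective point; this fails generically (e.g. for $f=(X_1^d,\dots,X_n^d)$ the only common zero is $0$), so this locus is a proper subvariety, and $\bSigma\subsetneq\cH$. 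For (2): fix $f\notin\bSigma$. Then $f\notin\Sigma$, so by \cite[Prop.~3.8]{distr} the fibre $\pi_1^{-1}(f)$ in the eigenpair solution variety $V$ is finite of the expected cardinality, and additionally $0$ is not an eigenvalue of $f$, so every eigenpair $(f,v,\lambda)\in\pi_1^{-1}(f)$ has $\lambda\neq 0$; hence $\psi$ restricted to $\hatpi_1^{-1}(f)$ is an unramified covering of $\pi_1^{-1}(f)$ with all fibres of size $d-1$, and the count of h-eigenpairs is $(d-1)$ times the count of eigenpairs-with-nonzero-$\lambda$. Reconciling this with the claimed $\cD(n,d)=d^n-1$ pins down exactly which quantity \cite[Prop.~3.8]{distr} delivers, and I would state that as the input and cite it.

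The main obstacle is this last reconciliation — making the two counting conventions (eigenpairs in $\sP$ weighted as in \cite{distr}, versus h-eigenpairs in $\sP$) match up so that the output is cleanly $d^n-1$, and verifying there are no coincidences or multiplicities lost when passing through $\psi$ (i.e. that for generic $f$ all $d^n-1$ h-eigenpairs are simple and distinct, so the derivative-of-$F_f$ identity $DF_f(\zeta,\eta^{d-1})=\cF_f(\zeta,\eta)$ transfers the well-posedness $\rank=n$ faithfully). I expect \cite[Prop.~3.8]{distr} to already be stated in a form ($d^n-1$ h-eigenpairs, or equivalently after the $(d-1)$-fold lift) that makes this immediate, in which case the whole proof reduces to: (i) $\bSigma$ is closed and proper because $\Sigma$ is and the zero-eigenvalue locus is proper; (ii) for $f\notin\bSigma$, invoke \cite[Prop.~3.8]{distr} directly. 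I would keep the write-up to those two bullet points, citing \cite[Prop.~3.8]{distr} for the hard algebraic-geometry content (genericity of simplicity, the count) and supplying only the elementary argument that the zero-eigenvalue locus is a proper subvariety.
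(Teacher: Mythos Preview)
Your final plan is correct and matches the paper, which is even terser: the paper's entire proof is the single sentence ``The following is implied by \cite[Prop.~3.8]{distr},'' so both the properness of $\bSigma$ and the count $d^n-1$ are delegated to that citation without further argument.

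Your detour through the $(d-1)$-to-$1$ map $\psi$ would also work but is not needed. The confusion you flag as ``the main obstacle'' --- where you worry about getting $(d^n-1)(d-1)$ --- comes from mis-guessing which quantity \cite[Prop.~3.8]{distr} delivers: the number of equivalence classes of eigenpairs in the weighted projective space $\HP(1,\ldots,1,d-1)$ is $(d^n-1)/(d-1)$ (this is the Cartwright--Sturmfels count), and lifting each through $\psi$ by a factor of $d-1$ gives exactly $d^n-1$. Alternatively, one can bypass $\psi$ entirely and count h-eigenpairs directly as zeros of $\cF_f\in\cH_{n+1,d}^n$ in $\HP^n$: B\'ezout gives $d^n$, and subtracting the always-present trivial zero $[0:1]$ leaves $d^n-1$. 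Either route closes the bookkeeping cleanly. (Minor quibble: ``$0$ is an eigenvalue of $f$ iff $f$ and the identity map share a common zero in $\HP^{n-1}$'' is oddly phrased --- it just means $f$ has a nontrivial common zero, i.e.\ a point in $\HP^{n-1}$ --- but your conclusion that this locus is a proper subvariety is correct, and your example $f=(X_1^d,\ldots,X_n^d)$ verifies it.)
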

Next, we characterize the tangent space of the solution variety.
\begin{prop}\label{tangent_space}
  \begin{enumerate}
  \item The tangent space of $\hatV$ at $(f,v,\eta)\in\hatW$ equals
  	\[\cset{(\dot{f},\dot{v}+riv,\dot{\eta})\in \polspace \times (v^\perp\oplus \HR iv)\times \HC}{(\dot{v},\dot{\eta})=-\deriv{\cF_f}{(v,\eta)}|_{v^\perp\times\HC}^{-1}\dot{f}(v)}.\]
  \item The tangent space of $\hV$ at $(f,v,\eta)\in \hW$ equals
    	\[\cset{(\dot{f},\dot{v},\dot{\eta})\in \polspace \times (v,\eta)^\perp}{(\dot{v},\dot{\eta})=-\deriv{\cF_f}{(v,\eta)}|_{(v,\eta)^\perp}^{-1}\dot{f}(v)}.\]
  \end{enumerate}
\end{prop}
\begin{proof}
Let $(f,v,\lambda)=\psi(f,v,\eta)$. Then we have $\Tang{(f,v,\lambda)}{V} = \deriv{\psi}{(f,v,\eta)}\Tang{(f,v,\eta)}{\hatV}.$
Note that, unless $\eta=0$, by \cref{deriv_psi} (1) the linear map $ \deriv{\psi}{(f,v,\eta)}$ is invertible. The first assertion follows from combining \cite[Lemma 3.3]{distr} with \cref{deriv_psi}~(2). To show the second assertion let $\Pi:\hatV\to \hV$ denote the canonical projection. Recall from \cite[Section 14.2]{condition} that the tangent space $\Tang{(v,\eta)}{\cP}=\Tang{(v,\eta)}{\HP(\HC^n\times\HC)}$ can be identified with the orthogonal complement $(v,\eta)^\perp$. Hence, $\deriv{\Pi}{(f,v,\eta)}=\id_{\cH^n}\times p$, where $p:\HS(\HC^n)\times \HC \to (v,\eta)^\perp$ is the orthogonal projection. Applying this projection to the tangent space in (1) shows the tangent space in (2) has the declared form.
\end{proof}

Let $\cU(n)$ denote the group of unitary linear maps $\HC^n\to\HC^n$. We define a group action on $\polspace$ via $U.f:=U\circ f\circ U^{-1}$ and a group action on $\hV$ and $\hatV$ via \begin{equation}\label{group_action}U.(f,v,\eta):=(U.f,Uv,\eta),\end{equation} respectively. We note that $\cU(n)$ acts on $\polspace$ by isometries---see \cite[Theorem 16.3]{condition}---and that $\hW,\hatW$ both are $\cU(n)$-invariant.
Clearly, $\pi_1,\pi_2,\hatpi_1,\hatpi_2$ from \cref{projections} are all $\cU(n)$-equivariant.

As mentioned earlier in the introduction, the special structure of the polynomial system $\cF_f(X,\ell)$ requires us to bound the norm of $f$. We define
    \begin{align*}
    \hV_\HS&:=\cset{(f,v,\eta)\in \HS(\polspace)\times \cP}{ f(v)=\eta^{d-1}v},\\
    \text{and}\quad\hatV_\HS&:=\cset{(f,v,\eta)\in \HS(\polspace)\times \myspaceS}{ f(v)=\eta^{d-1}v},
    \end{align*}
and $\hW_\HS$, $\hatW_\HS$, correspondingly.
\section{Two condition numbers}\label{sec:two_condition_numbers}
The problem of solving for eigenpairs comes with two notions of condition, depending on how one defines the output. There are \emph{two} possible output spaces: $\cP$ and $\myspaceS$. Defining the output as elements in~$\cP$ respects the architecture of Newton's method (because Newton's method moves points in projective space), while defining the output as elements in $\myspaceS$ respects the geometry of the h-eigenpair problem. For each output space we will derive a condition number using the framework in \cite[Section 14]{condition}. Both definitions of condition numbers are equally important in the analysis of our algorithm and we will discuss later in this section for what purposes we will use each of them.

Recall from \cref{projections} the definitions of the projections $\pi_1,\pi_2,\hatpi_1,\hatpi_2$. If the derivative $\deriv{\pi_1}{(f,v,\eta)}$ has full rank, there exists a local solution map $S_{(f,v,\eta)} := \pi_2\circ \pi_1^{-1}$. Note that $\deriv{\pi_1}{(f,v,\eta)}$ has full rank, if and only if $(f,v,\eta)\in\hW$. Then, the \emph{condition number of solving for h-eigenpairs with output in $\cP$} at $(f,v,\eta)$ is defined as
    \begin{equation}\label{kappa1}\kappa(f,v,\eta):=
    \begin{cases}\Norm{\deriv{S_{(f,v,\eta)}}{f}},&\text{if } (f,v,\eta)\in \hW.\\ \infty,&\text{else.}\end{cases}\end{equation}
Similarly, if $(f,v,\eta)\in\hatW$, there exist a local solution map $\widehat{S}_{(f,v,\eta)} := \hatpi_2\circ \hatpi_1^{-1}$. The \emph{condition number of solving for h-eigenpairs with output in $\myspaceS$} at $(f,v,\eta)$ is defined as
    \begin{equation}\label{kappa2}\widehat{\kappa}(f,v,\eta):=
    \begin{cases}\Norm{\deriv{\widehat{S}_{(f,v,\eta)}}{f}},&\text{if } (f,v,\eta)\in \hatW.\\ \infty,&\text{else.}\end{cases}\end{equation}
The following proposition characterizes the two condition numbers.
\begin{prop}
We have
\vspace{-0.25cm}
  \begin{align*}
    \text{for } (f,v,\eta)\in\hW: \quad\kappa(f,v,\eta)
  &=  \frac{\Norm{v}^d}{\Norm{v,\eta}} \Norm{\restr{\big(\deriv{\cF_f}{(v,\eta)}}{(v,\eta)^\perp}\big)^{-1}},\\[0.1cm]
  \text{for } (f,v,\eta)\in\hatW:\quad \widehat{\kappa}(f,v,\eta)
    &=  \Norm{\big(\restr{\deriv{\cF_f}{(v,\eta)}}{v^\perp\times\HC}\big)^{-1}}.
    \end{align*}
\end{prop}
\begin{proof}
Let $(v,\eta)\in\myspaceCwoz$ denote a representative for $(v,\eta)\in\cP$. From \cref{tangent_space}~(2) we know that the tangent space of $\hW$ at $(f,v,\eta)$ equals
  	$$\cset{(\dot{f},\dot{v},\dot{\eta})\in \polspace \times (v,\eta)^\perp}{(\dot{v},\dot{\eta})=-\big(\deriv{\cF_f}{(v,\eta)}|_{(v,\eta)^\perp}\big)^{-1} \dot{f}(v)}.$$
This shows that $\deriv{S_{(f,v,\eta)}}{f}$ maps $\dot{f}$ to $-\big(\deriv{\cF_f}{(v,\eta)}|_{(v,\eta)^\perp}\big)^{-1} \dot{f}(v)$. By \cite[Eq. (14.13)]{condition} the norm on $(v,\eta)^\perp$ is $\Norm{w}_{(v,\eta)}= \frac{\Norm{w}}{\Norm{(v,\eta)}}$.
 Hence, we have
  \begin{equation*}
    \kappa(f,v,\eta)=
  \Norm{\deriv{S_{(f,v,\eta)}}{f}}
    = \Norm{v,\eta}^{-1} \max_{\lVert\dot{f}\rVert =1} \Norm{\big(\deriv{\cF_f}{(v,\eta)}|_{(v,\eta)^\perp}\big)^{-1} \dot{f}(v)}.
  \end{equation*}
By \cite[Lemma 16.6]{condition} the map $\polspace\to\HC^n$, $\dot{f}\mapsto \dot{f}(v)$ maps the unit ball in $\polspace$ onto the unit ball with radius $\Norm{v}^d$ in $\HC^n$. This yields the first assertion. For the second assertion we proceed in the exact same way, but using the identification of the tangent space from \cref{tangent_space} (1).
\end{proof}
Since our the algorithm computes \emph{approximate} solutions rather than actual solutions, we must establish a notion of condition number for all points $\cH^n\times\cP$, detached from the necessity of being points in the solution variety $\hV$. Thus, in what follows we will not work with the two condition numbers $\kappa$ and $\widehat{\kappa}$, but use the following modified condition numbers. Their definition is inspired by the relative condition number defined in \cite[(16.6)]{condition}. We also take over the notation~$\mu$ from \cite{condition} for our definition.
\begin{rem}
In the following definition for $(f,v,\eta)\in \hV$  we "almost" define $\mu(f,v,\eta)$ as $\kappa(f,v,\eta)$. The only difference is the scaling $\Norm{v}^{d-1}$ instead of $\Norm{v}^d\Norm{(v,\eta)}^{-1}$. The reason for our definition of~$\mu(f,v,\eta)$ is that it is easier to work with when proving the Lipschitz estimates in \cref{lipschitz_sec}.
\end{rem}
 \begin{dfn}\label{mu_def}
	 \begin{enumerate}
\item For $(f,v,\eta)\in \cH^n\times\cP$ we define
\begin{equation*}
\mu(f,v,\eta):=\begin{cases}
\Norm{v}^{d-1}\Norm{\big(\restr{\deriv{\cF_f}{(v,\eta)}}{(v,\eta)^\perp}\big)^{-1}},&\text{if } \restr{\deriv{\cF_f}{(v,\eta)}}{(v,\eta)^\perp} \text{ is invertible}\\
\infty, &\text{else.}
\end{cases}
\end{equation*}
\item For $(f,v,\eta)\in\cH^n\times\myspaceS$ we define
\begin{equation*}\label{mu_def2}
\hatmu(f,v,\eta):=\begin{cases}
\Norm{\big( \restr{\deriv{\cF_f}{(v,\eta)}}{v^\perp\times \HC}\big)^{-1}},&\text{if } \restr{\deriv{\cF_f}{(v,\eta)}}{v^\perp\times \HC} \text{ is invertible}\\
\infty, &\text{else.}
\end{cases}
\end{equation*}
\end{enumerate}
\end{dfn}
\begin{rem}
For all $t\in\HC^\times$ we have $\mu(f,v,\eta)=\mu(f,tv,t\eta)$. Hence, the definition of $\mu(f,v,\eta)$ is independent of the choice of a representative of $(v,\eta)\in\cP$ and is well defined on~$\cH^n \times \cP$.
\end{rem}
Note that $\mu(f,v,\eta)$ and $\hatmu(f,v,\eta)$ are \emph{absolute} condition numbers. It turns out that  under the restriction $f\in \HS(\cH^n)$ the condition number $\mu(f,v,\eta)$ can be used to describe a homotopy method for the eigenpair problem, see \cref{Theorem_4.4}. But $\mu(f,v,\eta)$ does not scale properly with~$f$, see \cref{rem2} for details (this is why we define $\mu(f,v,\eta)$ as an absolute condition number). By contrast, we can modify $\hatmu(f,v,\eta)$ as follows. If $\restr{\deriv{\cF_f}{(v,\eta)}}{v^\perp\times \HC}$ is invertible, define
\begin{equation}\label{nu_def}
\hatmu_{\mathrm{rel}}(f,v,\eta):=
\Norm{\begin{bmatrix} \Norm{f} I_{n}&0\\0&\Norm{f}^\frac{d-2}{d-1}\end{bmatrix} \big(\restr{\deriv{\cF_f}{(v,\eta)}}{v^\perp\times \HC}\big)^{-1}}
\end{equation}
and $\hatmu_{\mathrm{rel}}(f,v,\eta):=\infty$, otherwise
(here~$I_n$ denotes the $n\times n$ identity matrix). In \cref{2dmu}~(5) we show that $\hatmu_{\mathrm{rel}}(f,w,\xi)$
\emph{is} invariant under scaling of~$f$. We will make use of this property when giving a probabilistic analysis for our algorithm in \cref{mu_expectation0}. Summarizing, we need the condition number $\mu(f,v,\eta)$ to construct our algorithm and the condition number $\hatmu(f,v,\eta)$ to analyze our algorithm.

The interplay between the two condition numbers can only work properly, if they are of the overall same magnitude and if this magnitude does not differ much from the magnitudes of $\kappa(f,v,\eta)$ and $\widehat{\kappa}(f,v,\eta)$. This is why we need \cref{condition_for_eigenpairs} below. The requirement that $\Norm{f}$ is bounded is crucial in that proposition, because only then the ratio $\frac{\Norm{v}}{\Norm{\eta}}$ is bounded. We discussed this already in the introduction---\cref{fig1} and \cref{useful_lemma2} are to be highlighted here---and one can say that \cref{condition_for_eigenpairs} is the quantification of that discussion.
\begin{prop}\label{condition_for_eigenpairs} Let $f\in\HS(\cH^n)$.
  \begin{enumerate}
\item If $(f,v,\eta)\in\hW$, we have $\kappa(f,v,\eta) \leq \mu(f,v,\eta)\leq \sqrt{2}\,\kappa(f,v,\eta).$
\item If $(f,v,\eta)\in\hatW$, we have $\widehat{\kappa}(f,v,\eta) = \hatmu(f,v,\eta).$
\end{enumerate}
Let $\Pi:\myspaceS\to\cP$ denote the canonical projection.
\begin{enumerate}[resume]
\item We have $\frac{1}{\sqrt{2}}\,\hatmu(f,v,\eta)\leq \mu(f,\Pi(v,\eta)) \leq \hatmu(f,v,\eta).$
\item We have $\frac{1}{2}\,\widehat{\kappa}(f,v,\eta)\leq \kappa(f,\Pi(v,\eta)) \leq \widehat{\kappa}(f,v,\eta).$
\end{enumerate}
\end{prop}
\begin{proof}
The first claim follows from \cref{useful_lemma2}. The second claim is given by the respective definitions. It remains to prove the third assertion. Abusing notation we denote $(v,\eta):=\Pi(v,\eta)$. We have $$\mu(f,v,\eta) = \Norm{
\restr{\big(\deriv{\cF_f}{(v,\eta)}}{(v,\eta)^\perp}\big)^{-1}}\leq \Norm{
\restr{\deriv{\cF_f}{(v,\eta)}}{v^\perp \times \HC}^{-1}} = \hatmu(f,v,\eta),$$
the inequality by \cref{projection_lemma}(3). Note that we have $v^\perp \times \HC = (v,0)^\perp$. Then
	\begin{equation*}
	\hatmu(f,v,\eta) = \Norm{
\restr{\big(\deriv{\cF_f}{(v,\eta)}}{v^\perp\times \HC}\big)^{-1}}\leq \frac{1}{\cos \delta}\, \Norm{
\restr{\big(\deriv{\cF_f}{(v,\eta)}}{(v,\eta)^\perp}\big)^{-1}} = \frac{\mu(f,v,\eta)}{\cos \delta},
	\end{equation*}
by \cref{projection_lemma}, where
	$\cos \delta=\cos d_\HP((v,0),(v,\eta)) = \Norm{v}^2/(\Norm{v}\,\sqrt{\Norm{v}^2+\norm{\eta}^2}).$
We have $\Norm{v}=1$ and, since $\Norm{f}=1$, by \cref{useful_lemma2}, we also have $\norm{\eta}\leq~1$. This shows $\cos\delta \geq \frac{1}{\sqrt{2}}$. Finally, (4) is a corollary of (3).
\end{proof}
We finish this section by giving some useful properties of the condition numbers.
\begin{lemma}\label{2dmu}
\begin{enumerate}
  \item For $(f,v,\eta)\in\hatV$ we have
  $$1\leq d\,\left(\Norm{f}+\max\set{\norm{\eta}^{d-1},\norm{\eta}^{d-2}}\right)\Norm{\restr{(\deriv{\cF_{f}}{(v,\eta)}}{(v,\eta)^\perp})^{-1}}.$$
\item For $(f,v,\eta)\in\hW_\HS$ we have $1\leq 2d\mu(f,v,\eta)$.
\item If $t\to (f_t,v_t,\eta_t)$ is a curve in $\hW_\HS$, then $\Norm{\dot{v_t},\dot{\eta_t}}\leq \mu(f,v,\eta)\Norm{\dot{f_t}}$.
\item Let $U\in\cU(n)$. For all $(f,v,\eta)\in\hW$ we have $\mu(U.(f,v,\eta))=\mu(f,v,\eta)$ and for all $(f,v,\eta)\in\hatW$ we have $\hatmu(U.(f,v,\eta))=\hatmu(f,v,\eta)$
\item For all $(f,v,\eta)\in\hatV$ and all $s\in\HC^\times$ we have $\hatmu_{\mathrm{rel}}(s^{d-1}f,v,s\eta)= \hatmu_{\mathrm{rel}}(f,v,\eta).$
\end{enumerate}
\end{lemma}
\begin{proof}
Throughout the proof we choose a representative $(v,\eta)\in\myspaceS$.

If $(f,v,\eta)\not\in\hatW$, then $\restr{(\deriv{\cF_{f}}{(v,\eta)}}{(v,\eta)^\perp}$ is not invertible. Hence $\Vert{\restr{(\deriv{\cF_{f}}{(v,\eta)}}{(v,\eta)^\perp})^{-1}\Vert}=\infty$ and so the first claim is trivially true in this case. Assume now that $(f,v,\eta)\in\hatW$. From the submultiplicativity of the spectral norm we get
$$1\leq \Norm{\restr{(\deriv{\cF_{f}}{(v,\eta)}}{(v,\eta)^\perp})^{-1}} \Norm{\deriv{\cF_{f}}{(v,\eta)}}.$$
Using \cref{jacobian2} we see that
  \begin{align*}
  \Norm{\deriv{\cF_{f}}{(v,\eta)}}&=\Norm{\begin{bmatrix}\deriv{f}{v}-\eta^{d-1}I_n,& -(d-1)\eta^{d-2}v\end{bmatrix}}\nonumber
  \leq \Norm{\deriv{f}{v}} + \norm{\eta}^{d-1} +(d-1)\norm{\eta}^{d-2};
\end{align*}
  for the last line we have used the triangle inequality. From \cite[Lemma 16.46]{condition} we get the inequality $\Norm{\deriv{f}{v}}\leq d\Norm{f}$. Using that $d\geq 1$ shows the first assertion.

 The second assertion follows from the first by using \cref{useful_lemma2}: $\norm{\eta}\leq \Norm{v}=1$ for $\Norm{f}=1$.

For (3) let $S$ be the solution map from the beginning of \cref{sec:two_condition_numbers}. We take derivatives on both sides of $S_{(f_t,v_t,\eta_t)}(f_t)=(v_t,\eta_t)$ to get $\deriv{S_{(f_t,v_t,\eta_t)}}{f_t}(\dot{f_t}) = (\dot{v_t},\dot{\eta_t}),$ such that
	$$\Norm{\dot{v_t},\dot{\eta_t}} = \Norm{\deriv{S_{(f_t,v_t,\eta_t)}}{f_t}(\dot{f_t})} \leq  \Norm{\deriv{S_{(f_t,v_t,\eta_t)}}{f_t}}\; \Norm{\dot{f_t}}=\kappa(f_t,v_t,\eta_t)\; \Norm{\dot{f_t}}.$$
Using \cref{condition_for_eigenpairs} (1) shows then the third assertion.

A straightforward calculations shows the fourth assertion so that it remains to prove (5). First, observe that $\hatmu_{\mathrm{rel}}(f,v,\eta)<\infty$ if and only if $\hatmu_{\mathrm{rel}}(s^{d-1}f,v,s\eta)<\infty$. From \cref{jacobian2} we have
\begin{align}\nonumber
\deriv{\cF_{s^{d-1}f}}{(v,s\eta)}
&= \begin{bmatrix}s^{d-1} \deriv{f}{v}-s^{d-1}\eta^{d-1}I_n,& -(d-1)s^{d-2}\eta^{d-2}w\end{bmatrix}\\
&=\deriv{\cF_{f}}{(v,\eta)}\,\begin{bmatrix}
s^{d-1}I_{n}&0\\
0&s^{d-2}
\end{bmatrix}.\label{hallo123}
\end{align}
Using that
\begin{equation*}\label{eq_muhat}
\deriv{\cF_{f}}{(v,\eta)} \restr{\begin{bmatrix}
s^{d-1}I_{n}&0\\
0&s^{d-2}
\end{bmatrix}}{v^\perp\times\HC} =\restr{\deriv{\cF_{f}}{(v,\eta)}}{v^\perp\times\HC}
\restr{\begin{bmatrix}
s^{d-1}I_{n}&0\\
0&s^{d-2}
\end{bmatrix}}{v^\perp\times\HC}
\end{equation*}
we obtain with \cref{hallo123}
 \begin{align*}
\hatmu_{\mathrm{rel}}(s^{d-1}f,v,s\eta)=&\Norm{\begin{bmatrix}
\Norm{s}^{d-1}\Norm{f}I_{n}&0\\
0&\Norm{s}^{d-2}\Norm{f}^\frac{d-2}{d-1}
\end{bmatrix}
\restr{\deriv{\cF_{s^{d-1}f}}{(ws,\eta)}}{v^\perp\times\HC}^{-1}}\\
=&\Norm{\begin{bmatrix}
\Norm{f}I_{n}&0\\
0&\Norm{f}^\frac{d-2}{d-1}
\end{bmatrix}
\restr{\deriv{\cF_{f}}{(v,\eta)}}{v^\perp\times\HC}^{-1}}\;
= \;\hatmu_{\mathrm{rel}}(f,v,\eta).
\end{align*}
This finishes the proof.
\end{proof}
\begin{rem}\label{rem2}
The proof above shows why the condition number $\mu(f,v,\eta)$ does not scale properly with $f$. In fact, we cannot adapt equation \cref{eq_muhat} to $\mu(f,v,\eta)$, because in general we have
\begin{equation*}
\deriv{\cF_{f}}{(v,\eta)} \restr{\begin{bmatrix}
s^{d-1}I_{n}&0\\
0&s^{d-2}
\end{bmatrix}}{(v,\eta)^\perp} \neq \restr{\deriv{\cF_{f}}{(v,\eta)}}{(v,\eta)^\perp}
\restr{\begin{bmatrix}
s^{d-1}I_{n}&0\\
0&s^{d-2}
\end{bmatrix}}{(v,\eta)^\perp}.
\end{equation*}
\end{rem}
\section{The adaptive linear homotopy method}\label{sec:main_thm}
We have now discussed the geometry of the h-eigenpair problem and introduced the two condition numbers. It is time to define our algorithm.
\subsection{Newton method and approximate eigenpairs}
Let $f\in\polspace$. The Newton operator for the h-eigenpair problem with respect to $f$ is the usual Newton operator associated to the polynomial system $\cF_f:\HC^n\times\HC\to \HC^n$, that is
	\begin{align*}
	&\mathrm{Newton}_f: \quad\cP \to \cP\\
	&(v,\eta)\mapsto\begin{cases}(v,\eta)-\big(\deriv{\cF_f}{(v,\eta)}|_{(v,\eta)^\perp}\big)^{-1} \cF_f(v,\eta),&\text{if } \deriv{\cF_f}{(v,\eta)}|_{(v,\eta)^\perp} \text{ invertible} \\ (v,\eta),&\text{else}.\end{cases}
	\end{align*}
The operator $\mathrm{Newton}_f$ is a well defined rational map and its image is contained in $\cP$. The following definition was already made in the introduction. It is \cite[Section 14.1, Definition~1]{BSS} specialized to our scenario.
\begin{dfn}\label{def_approx_eigenpair}
Let $(f,v,\eta)\in\hV$. We say that $(w,\xi)\in\cP$ is an \emph{approximate eigenpair} of $f$ with associated h-eigenpair $(v,\eta)$, if for all $i\geq 0$, we have
 $$\forall i\geq 0: \; d_\HP((w_i,\xi_i),(v,\eta))\leq \frac{1}{2^{2^i-1}} d_\HP((w,\xi),(v,\eta)),$$
where $(w_0,\xi_0)=(w,\xi)$ and $(w_i,\xi_i)=\mathrm{Newton}_f(w_{i-1},\xi_{i-1}), i\geq 1$.
\end{dfn}
For the following theorem we need the functions $\delta(r)$ and $u(r)$, which are used to indicate the size of basins of attraction of the Newton method. They are defined as in \cite[p. 316]{condition}:
\begin{equation}\label{u_and_delta}
\Psi_\delta(u):=(1+\cos\delta)(1-u)^2-1,\;\delta(r):=\min\limits_{\delta>0} \set{\sin\delta =r \delta},\; u(r):=\min\limits_{u>0}\set{ 2u=r\Psi_{\delta(r)}(u)},
\end{equation}
for $u,\delta\in\HR$. The following definition is \cite[Definition 16.35]{condition}, the only difference being the notation $\gamma(f,v,\eta)$, which in that reference would be denoted $\gamma(\cF_f,(v,\eta))$:
\begin{equation}\label{gamma_def}\gamma(f,v,\eta):=\Norm{v,\eta}\;\max\limits_{k\geq 2} \Norm{\frac{1}{k!}\; \big(\deriv{\cF_f}{(v,\eta)}|_{(v,\eta)^\perp}\big)^{-1} \; \derivk{\cF_f}{(v,\eta)}}^\frac{1}{k-1}\end{equation}
(the norm is the norm for multilinear operators as defined in \cref{spectral_norm}).
The following theorem is \cite[Theorem 16.38]{condition} specialized to our scenario.
\begin{thm}\label{gamma_thm}
Let $(f,v,\eta)\in \hW$ and $(w,\xi)\in\cP$ be such that there exists some $\frac{2}{\pi} \leq r < 1$ with $d_\HP((v,\eta),(w,\xi)) \leq \delta(r)$ and $d_\HP((v,\eta),(w,\xi)) \gamma(f,v,\eta)\leq u(r),$ where $\delta(r)$ and $u(r)$ are as in~\cref{u_and_delta}. Then $(w,\xi)$ is an approximate eigenpair of $(v,\eta)$.
\end{thm}
It turns out that, if $\Norm{f}=1$, then $\gamma(f,v,\eta)$ can be bounded from above by the condition number, that we introduced in \cref{mu_def}. The following is a version of the higher derivative estimate from \cite[Theorem 16.1]{condition}. Its proof is postponed to \cref{condition_number_thm_proof}.
\begin{thm}\label{Theorem_4.3}
For $(f,v,\eta)\in\hV_\HS$ we have $\gamma(f,v,\eta)\leq \mu(f,v,\eta) d^2 \sqrt{2n} .$
\end{thm}
At this point it should not be surprising anymore that the requirement $\Norm{f}=1$ is crucial.

\subsection{The algorithm EALH}
We can now state the algorithm EALH (eigenpair adaptive linear homotopy). This algorithm lays the basis for two other subsequent algorithms that are randomized versions of EALH. Because EALH needs an initial solution $(g,v,\eta)$ as input and we want to get rid of this requirement, we will later choose the initial solution randomly. Let
\begin{equation}\label{Theta}\Theta(\varepsilon):= 1-(1-\varepsilon)^{-2}+\cos(\tfrac{\varepsilon}{4})-\varepsilon.\end{equation}
The algorithm EALH is given as follows.

\medskip
\begin{algorithm}[H]
  \caption{Adaptive linear homotopy method for eigenpairs (EALH)\label{ALH}}
\SetAlgoLined
\KwIn{$f\in \HS(\cH^n)$ and $(g,v,\eta)\in \hV_\HS$, $g\neq \pm f$.}
\KwOut{An approximate eigenpair $(w,\xi)\in \cP$ of $f$.}
Initialize: $\tau\leftarrow 0$\;
Initialize: $q\leftarrow g$, $(w,\xi)\leftarrow(v,\eta)$\;
Initialize: $\varepsilon \leftarrow 0.04$, $\chi\leftarrow 2\Theta(\varepsilon)-1$, $\alpha\leftarrow d_\HS(f,g)$\;
\While{$\tau <1$}{
$\Delta\, \tau \leftarrow \chi \,\varepsilon (1-\varepsilon)^4 \Theta(\varepsilon) / (4\alpha d^{2} \sqrt{n}\, \mu(q,w,\xi)^2)$\;
$\tau\leftarrow\min\set{1,\tau+\Delta\,\tau}$\;
$q\leftarrow$ point on the geodesic path in $\HS(\cH^n)$ from $f$ to $g$ with $d_\HS(q,g)\leftarrow\tau\alpha$\;
$(w,\xi) \leftarrow \mathrm{Newton}_q(w,\xi)$\;
}
\Postcondition The algorithm halts, if $E_{f,g}$ does not intersect $\bSigma$\;
\end{algorithm}
\medskip
\begin{rem}\label{postconditions}
\cref{prop_eigendiscriminant} tells us that $\bSigma$ is of real codimension at least 2. This shows that for almost all $f$ and $(g,v,\eta)$ the postconditions of are fulfilled.
\end{rem}
We denote by $K(f,(g,v,\eta))$ the number of iterations that EALH performs on inputs $f\in\polspace$, $(g,v,\eta)\in \hV_\HS$. The proof of the following analysis is postponed to \cref{ALH-proof}.
\begin{thm}[Analysis of EALH]\label{Theorem_4.4}
If $E_{f,g}\cap\bSigma=\emptyset$, then algorithm EALH terminates, the output $(w,\xi)$ is an approximate eigenpair of $f$ and we have
$$K(f,(g,v,\eta))\leq 246\;d^2\sqrt{n}\; d_\HS(f,g)\;\int_0^1\mu(q_\tau,v_\tau,\eta_\tau)^2 \d \tau,$$
where $\cset{(q_\tau,v_\tau,\eta_\tau)}{0\leq \tau\leq 1}$ denotes the lifting of $E_{f,g}$ at $(g,v,\eta)$.
\end{thm}
In the following we denote
\begin{equation}\label{notation_C}\cC(f,(g,v,\eta)) :=\int_0^1\mu(q_\tau,v_\tau,\eta_\tau)^2 \d \tau,\end{equation}
where $(q_\tau,v_\tau,\eta_\tau)$ is defined as in \cref{Theorem_4.4}.

\begin{rem}\label{approx_rem}
Algorithm EALH not only finds approximations of h-eigenpairs, but also approximations of eigenvectors. To see this, let $f\in\HS(\cH^n)$ be the input of algorithm EALH and suppose that we have found $(w,\xi)\in \HP(\HC^n\times\HC)$ with $$\delta:=d_\HP((v,\eta),(w,\xi)) < \frac{\pi}{16}$$
for some h-eigenpair $(v,\eta)$ of $f$. Let $(v,\eta),(w,\xi)\in\myspaceS$ also denote representatives with $\delta= d_\HS((v,\eta),(w,\xi))$.  By \cref{useful_lemma2} we have $\norm{\eta}\leq 1$. Using this and \cref{real_geometry_prop} we may deduce the inequalities $\delta\geq \sqrt{8}^{\,-1}\Norm{v-w,\eta-\xi} \geq\sqrt{8}^{\,-1} \Norm{v-w}$.
Since $v,w$ are points in the sphere $\HS(\HC^n)$, we have $\Norm{v-w}\geq \sin d_\HS(v,w)\geq \sin d_\HP(v,w)$. Using that for all $0\leq \phi\leq \frac{\pi}{2}$ we have $\sin\phi \geq \frac{2}{\pi}\phi$, we finally obtain
	$$d_\HP((v,\eta),(w,\xi))=\delta \geq \frac{1}{\pi\sqrt{2}}\; d_\HP(v,w).$$
In other words, if $(w,\xi)$ approximates $(v,\eta)$, then $w$ approximates $v$.
\end{rem}
\subsection{The algorithm LVEALH}\label{sec:LVEALH}
We now want to modify the algorithm EALH, such that the initial system together with one of its h-eigenpairs is chosen randomly. For this we need to define a probability distribution on $\hV_\HS$.
\begin{enumerate}
\item Choose $g\in\HS(\cH^n)$ uniformly at random.
\item Choose $(v,\eta)\in \cP$ uniformly from the $\cD(n,d)$ many eigenpairs of $g$.
\end{enumerate}
Compare \cite{distr}, where we proceeded similarly.
We denote the distribution that this procedure implies on the space of h-eigenpairs by $\rho$. The algorithm Las Vegas EALH (LVEALH) is as follows.

\medskip
\begin{algorithm}[H]
  \caption{Las Vegas EALH (LVEALH)}
\SetAlgoLined
\KwIn{$f\in \HS(\HC^n)$.}
\KwOut{An approximate eigenpair $(w,\xi)\in \cP$ of $f$.}
Choose $(g,v,\eta)\in\hV$ with density $\rho$\;
Start algorithm EALH with inputs $(g,v,\eta)$ and $f$.\;
Set $(w,\xi) \leftarrow$ output of EALH.\;
\Postcondition The algorithm halts, if $E_{f,g}$ does not intersect $\bSigma$\;
\end{algorithm}
\medskip
%
%
%
A little care should be taken here. We have not given an algorithm to draw from $\rho$. And, unfortunately, we are not able to provide such an algorithm. The reason why we still describe LVEALH is that we can provide an average analysis for it and this will be relevant for the analysis of the subsequent true randomized algorithm LVEALHWS; see \cref{Theorem_4.8}.

The proof of the following theorem is postponed to \cref{sec:avg_anal}.
\begin{thm}[Average analysis of LVEALH]\label{Theorem_4.6}
We have
$$\mean\limits_{f \sim\mathrm{Unif}\;\HS(\cH^n)}\; \mean\limits_{(g,v,\eta)\sim\rho} \; \cC(f,(g,v,\eta)) <   \frac{40\pi\,nN}{d}$$
where $\cC$ is as in  \cref{notation_C} and where $N=\dim_\HC(\polspace)$.
\end{thm}
\subsection{The algorithm LVEALHWS}
We now describe an algorithm that defines another probability density function on~$\hV_\HS$, which we denote by $\rho^\star$. The inspiration for this algorithm is \cite[Section 10.1]{Armentano2015a}. It roughly works as follows. Beltr\'an and Pardo \cite{Beltran2011} gave an algorithm to sample a system $\Hf$ of $n-1$ polynomials of degree $d$ in $n$ variables and a zero uniform at random from the $d^{n-1}$ many zeros of $\Hf$. I.e.; Beltr\'an-Pardo randomization yields  $\Hf\sim N(\cH^{n-1})$ and $v\in\HP(\HC^n)$ with $\Hf(v)=0$. To get from this a polynomial system together with an h-eigenpair we sample one further single polynomial $\Hf_0\in\cH$ and some~$\eta$ with $\Hf_0(v)=\eta^{d-1}$, so that, by construction, we get
$$\begin{bmatrix} \Hf_0(v)\\\Hf(v)\end{bmatrix} = \begin{bmatrix} \eta^{d-1}\\0\end{bmatrix} = \eta^{d-1} e_1.$$
If we sample~$U\in\cU(n)$ under the constraint that $Ue_1=v$, where $e_1:=(1,0,\ldots,0)$ and put $f:=U[\Hf_0, \Hf]$, we have, as desired,
$$f(v):=U\begin{bmatrix}\Hf_0(v)\\\Hf(v)\end{bmatrix} =\eta^{d-1} Ue_1=\eta^{d-1}v.$$
The discussion in \cite{distr} made us realize how one must sample the eigenvalue $\eta$: Draw $r>0$ with density $e^{-r}$, draw $\phi\in [0,2\pi)$ uniform at random and then put $\eta:=r^{\frac{1}{2(d-1)}} \exp(i\phi)$. We give a precise description of the density of $\eta$ in \cref{beta_density} below. The explicit algorithm is as follows.

\medskip
\begin{algorithm}[H]
  \caption[Sampling method for the starting system for EALH]{Draw-from-$\rho^\star$.}
\SetAlgoLined
\KwIn{-.}
\KwOut{$(f,v,\eta)\in\hV$.}
Use Beltr\'an-Pardo to draw $\Hf\sim N(\cH^{n-1})$ and $v\in\HP(\HC^n)$ from the $d^{n-1}$ many zeros of $\Hf$\;
Draw $a \sim N(v^\perp)$ and $h\sim N(R(v))$, where $R(v)=\cset{h\in\polspace}{h(v)=0, \deriv{h}{v}=0}$ is as in \cref{se:geo-framework}\;
Draw $U\in\cU(n)$, such that $Ue_1=v$, uniformly at random\;
Draw $r$ with density $e^{-r}\;\mathbf{1}_{\set{r\geq 0}}(r)$, choose $\phi\in[0,2\pi)$ uniformly, put $\eta:=r^\frac{1}{2(d-1)} \exp(i\phi)$\;
Define $\restr{I}{v^\perp}$ as the restriction of the identity to $v^\perp$\;
\uIf{$\Norm{\restr{\deriv{\Hf}{v}}{v^\perp}+\eta^{d-1}\restr{I}{v^\perp}}_F \leq \Norm{\restr{\deriv{\Hf}{v}}{v^\perp}}_F$\;}
{Put $\Hf_0:=\eta^{d-1} \;\langle X,v\rangle^d + \sqrt{d}\;\langle X,v\rangle^{d-1} a^T X + h$\;}
\Else{Delete $\eta,\phi$ and $r$ and go to 3\;}
Put $f:=U \begin{bmatrix}  \Hf_0\\ \Hf\end{bmatrix}$\;
{\bf Return} $(f,v,\eta)\in \hV$\;
\end{algorithm}
\medskip

The advantage of $\rho^\star$ when compared to $\rho$ is that we have the next proposition.
\begin{prop}\label{Theorem_4.7}
We assume to have a random number generator that can draw von $N(\HC)$ in~$\cO(1)$. We count draws as arithmetic operations. The algorithm Draw-from-$\rho^\star$ can be implemented so its expected number of arithmetic operations is $\cO(n^3+dnN)$, where $N=\dim_\HC(\polspace)$.
\end{prop}
The proof of the proposition is postponed to \cref{sec:Theorem_4.7}. Generating a starting system for the algorithm EALH with Draw-from-$\rho^\star$ yields the algorithm we call LVEALH with sampling.

\medskip
\begin{algorithm}[H]
  \caption{Las Vegas EALH with sampling (LVEALHWS).}
\SetAlgoLined
\KwIn{$f\in \HS(\HC^n)$.}
\KwOut{An approximate eigenpair $(w,\xi)\in\cP$ of $f$.}
Run Draw-from-$\rho^\star$\;
Set $(g,v,\eta)\leftarrow $ output of Draw-from-$\rho^\star$\;
Put $\tilde g=g/\Norm{g}$ and $\tilde \eta= \eta/\Norm{g}^\frac{1}{d-1}$\;
Start algorithm EALH with inputs $(\tilde g,v,\tilde \eta)$ and $f$\;
Set $(w,\xi) \leftarrow$ output of EALH\;
\Postcondition The algorithm halts, if $E_{f,g}$ does not intersect $\bSigma$.
\end{algorithm}
\medskip

We can use the average analysis of algorithm LVEALH given in \cref{Theorem_4.6} to analyze LVEALHWS. The proof of the next proposition is postponed to \cref{sec:rho_star}.
\begin{prop}\label{Theorem_4.8}
We have
\begin{align*}
&\mean\limits_{f \sim \mathrm{Unif}\,\HS(\polspace)} \mean\limits_{(g,v,\eta)\sim \rho^\star}
\cC (\tilde g,v,\tilde \eta)
\leq 10\sqrt{\pi}\,n\;\mean\limits_{f\sim \mathrm{Unif}\,\HS(\polspace)}\mean\limits_{(g,v,\eta)\sim \rho} \cC(f,(g,v,\eta)),
\end{align*}
where $(\tilde g, v, \tilde \eta)$ is the scaled h-eigenpair given by $\tilde g=g/\Norm{g}$ and $\tilde \eta= \eta/\Norm{g}^\frac{1}{d-1}$.
\end{prop}
Now we have gathered all the ingredients to prove \cref{main_thm}.
\begin{proof}[Proof \cref{main_thm}]
We prove that algorithm LVEALHWS has the properties stated. First, note that from \cref{postconditions} we get that algorithm LVEALHWS terminates almost surely. \cref{Theorem_4.7} states that expected number of arithmetic operations of algorithm Draw-from-$\rho^\star$ is~$\cO(n^3+dnN)$. The cost of every iteration in algorithm EALH is dominated by the costs of evaluating $\mathrm{Newton}_f(v,\eta)$, which by~\cite[Proposition 16.32]{condition} is $\cO(N)$. From \cref{Theorem_4.4} we get that the expected number of iterations algorithm EALH is at most
	\begin{align*}
	\mean\limits_{f \sim \mathrm{Unif}\,\HS(\polspace)}\mean\limits_{(g,v,\eta)\sim \rho^\star} K(\tilde g,v,\tilde \eta)
	\leq &246\;d^2\sqrt{n}\; d_\HS(f,g)\, \mean\limits_{f \sim \mathrm{Unif}\,\HS(\polspace)}\mean\limits_{(g,v,\eta)\sim \rho^\star} \cC (\tilde g,v,\tilde \eta
	\end{align*}
Combining \cref{Theorem_4.8} with \cref{Theorem_4.6} we see that
	$$\mean\limits_{f \sim \mathrm{Unif}\,\HS(\polspace)}\mean\limits_{(g,v,\eta)\sim \rho^\star} \cC (\tilde g,v,\tilde \eta) \leq  \frac{400\sqrt{\pi}^3\,n^2N}{d}, \quad\text{ where } \tilde g=\frac{g}{\Norm{g}}, \;\tilde \eta=\frac{\eta}{\Norm{g}^\frac{1}{d-1}}$$
and hence, $$\mean\limits_{f \sim \mathrm{Unif}\,\HS(\polspace)}\mean\limits_{(g,v,\eta)\sim \rho^\star} K(\tilde g,v,\tilde \eta)\leq 98400 d \sqrt{\pi}^3\,n^\frac{5}{2}\,N\,d_\HS(f,g).$$
Using that $d_\HS(f,g)\leq \frac{\pi}{2}$ we see that the expected number of iterations of EALH is $\cO(dn^\frac{5}{2}N)$ and so the expected number of arithmetic operations of EALH is $\cO(dn^\frac{5}{2}N^2)$. Altogether we have shown that the expected number of arithmetic operations of LVEALHWS is $\cO(n^3+dn^\frac{5}{2}N^2)$.
\end{proof}
\section{Auxiliary results}
In this section we give a number of auxiliary results that are needed to prove the theorems and proposition from the preceding section.
\subsection{Integration on the solution manifold}
Recall from \cref{projections} the projections $\hatpi_1:\hV\to \cH^n $ and $\hatpi_2:\hV\to \myspaceS$. We define the \emph{average} of a measurable map $\Xi:\hatV\to \HR$ as
\begin{equation}\label{part2.2:f_av_dfn2}
\widehat{\Xi}_\mathrm{\;av}(f):=\frac{1}{2\pi\cD(n,d)}\; \int_{(f,v,\eta)\in \hatpi^{-1}(f)} \widehat{\Xi}(f,v,\eta) \;\d(f,v,\eta).
\end{equation}
The following technical result will be needed in various occasions throughout the paper.
\begin{lemma}\label{part2.2:integration}
Suppose that $\widehat{\Xi}:\hatV\to\HR$ is a measurable and unitarily invariant function; that is, for all $U\in\cU(n)$ we have $\widehat{\Xi}(U.(f,v,\eta))=\widehat{\Xi}(f,v,\eta).$ Let $I_{n-1}$ denote the $(n-1)\times(n-1)$ identity matrix. Then,
\begin{equation*}
\mean\limits_{f\sim N(\polspace)}  \widehat{\Xi}_{\mathrm{av}}(f)=\frac{\mathrm{vol}\;\HS(\HC^n)}{2\pi^{n+1}\cD(n,d)} \int_{\eta\in\HC} (d-1)^2 \widehat{E}(\eta) \norm{\eta}^{2(d-2)} e^{-\norm{\eta}^{2(d-1)}} \,\d \eta
\end{equation*}
where
	\[\widehat{E}(\eta):= \mean\limits_{A,a,h} \widehat{\Xi}(f,e_1,\eta) \norm{\det(\sqrt{d}A-\eta^{d-1}I_{n-1})}^2,\]
  and where $A\sim N(\HC^{(n-1)\times(n-1)})$, $a\sim N(\HC^{n-1})$, $h\sim N(R(e_1))$ are independent, and where $R(e_1) = \cset{h\in\polspace}{h(e_1)=0, \deriv{h}{e_1}=0}$ and
  $$
  f= \eta^{d-1}X_1^d e_1 + \sqrt{d}X_1^{d-1}\begin{bmatrix} a^T\\ A\end{bmatrix} (X_2,\ldots,X_n)^T + h.$$
\end{lemma}
\begin{proof}
  Recall from \cref{psi_h_eigenpairs} the map $\psi: \hatV \to V,\; (f,v,\eta)\mapsto (f,v,\eta^{d-1})$. Using \cite[Lemma 3.10]{distr} and making a change of variables through $\psi$ yields the assertion.
\end{proof}
Similar to \cref{part2.2:f_av_dfn2} for a measurable map $\widehat{\Xi}: \hV \to \HR$ we write
\begin{equation}\label{part2.2:f_av_dfn}
\Xi_\mathrm{\;av}(f):=\frac{1}{\cD(n,d)}\; \int_{(f,v,\eta)\in \pi^{-1}(f)} \Xi(f,v,\eta) \;\d(f,v,\eta).
\end{equation}
The meaning of $\Xi_\mathrm{\;av}$ is explained in the following lemma.
\begin{lemma}\label{forward_distribution}
Let $\rho$ denote the density of the probability distribution that is obtained by choosing $f\sim \mathrm{Unif}\,\HS(\cH^n)$ and then choosing $(v,\eta)$ uniformly at random from the eigenpairs of $f$ (as in the beginning of \cref{sec:LVEALH}). Then
  $\mean_{(f,v,\eta)\sim \rho} \Xi(f,v,\eta) = \mean_{f\sim \mathrm{Unif}\,\HS(\cH^n)} \Xi_\mathrm{\;av}(f).$
\end{lemma}
\begin{proof}
The fiber $
   \pi_1^{-1}(f)=\set{(f,v,\eta)\in  \hV_\HS}$
over $f\not\in \Sigma$ consists of $\cD(n,d)$ points. Hence the density of the uniform distribution on that fiber equals $\cD(n,d)^{-1}$. Moreover, $\mathrm{Unif}\,\HS(\cH^n)$ is the forward distribution of $\rho$ under the projection $\hV_\HS \to \polspace$. this concludes the proof.
\end{proof}
\subsection{The condition number has small expectation}
This section is crucial for the proof of \cref{Theorem_4.6} presented in \cref{sec:avg_anal}.

\cref{Theorem_4.4} shows that the average number of steps of algorithm LVEALH is closely connected to the expectation of the condition number $\mu(f,v,\eta)$. By \cref{condition_for_eigenpairs} the expectation of $\mu(f,v,\eta)$ is bounded by the expectation of~$\hatmu(f,v,\eta)$. We prefer working with $\hatmu(f,v,\eta)$ because of the possibility of making a relative condition number out of it; see \cref{nu_def}.As in~\cref{part2.2:f_av_dfn2} we write
\begin{equation}\label{hatmu_av}(\hatmu^2)_{\mathrm{av}}(f):= \frac{1}{2\pi\cD(n,d)} \int_{\substack{(v,\eta)\in\myspaceS:\\ f(v)=\eta^{d-1}v}} \,\hatmu(f,v,\eta)^2\, \d(v,\eta)\end{equation}
\begin{prop}\label{mu_expectation0}
Let $\mathrm{Unif}\,\HS(\polspace)$ denote the uniform distribution on the unit sphere in $\polspace$. We have that
$$\mean\limits_{f\sim\mathrm{Unif}\,\HS(\polspace)}   (\hatmu^2)_{\mathrm{av}}(f) < \frac{80\,nN}{d} .$$
\end{prop}
The first task on the way to prove \cref{mu_expectation0} is to use  \cite[Corollary 2.23]{condition} to replace $\mathrm{Unif}\,\HS(\polspace)$ by the standard normal distribution $N(\polspace)$. The advantage of the latter is the independence of the coefficients of~$f\sim N(\polspace)$. Recall from \cref{nu_def} that we have put
\begin{equation}\label{nu_def2}
\hatmu_{\mathrm{rel}}(f,v,\eta):=
\Norm{\left[\begin{smallmatrix} \Norm{f} I_{n}&0\\0&\Norm{f}^\frac{d-2}{d-1}\end{smallmatrix}\right] \big(\restr{\deriv{\cF_f}{(v,\eta)}}{v^\perp\times \HC}\big)^{-1}},
\end{equation}
if $\restr{\deriv{\cF_f}{(v,\eta)}}{v^\perp\times \HC}$ is invertible and $\hatmu_{\mathrm{rel}}(f,v,\eta):=\infty$, otherwise.  By construction, for $\Norm{f}=1$ the relative and the absolute condition number coincide. This implies
  \begin{equation}\label{abc}
  \mean\limits_{q\sim\mathrm{Unif}\;\HS(\polspace)}   (\hatmu^2)_{\mathrm{av}}(q) = \mean\limits_{q\sim\mathrm{Unif}\;\HS(\polspace)}   (\hatmu_\mathrm{rel}^2)_{\mathrm{av}}(q)
  \end{equation}
For general $f$ we have the following inequality.
\begin{lemma}\label{mutilde}
For all $f\in\polspace$ we have $$(\hatmu_\mathrm{rel}^2)_{\mathrm{av}}(f)  \leq \max\set{\Norm{f}^2,\Norm{f}^\frac{2(d-2)}{d-1}} (\hatmu_{\mathrm{rel}}^2)_{\mathrm{av}}(f) .$$
\end{lemma}
\begin{proof}
Use \cref{nu_def2} the submultiplicativity of the spectral norm.
\end{proof}
From \cref{2dmu} (5) we see that $(\hatmu_{\mathrm{rel}}^2)_{\mathrm{av}}(f)$ is scale invariant. We further get from \cref{2dmu}~(4), that $(\hatmu_{\mathrm{rel}}^2)_{\mathrm{av}}(f)$ is unitarily invariant. By \cite[Corollary 2.23]{condition} we can write
\begin{equation}\label{abc2}
\mean\limits_{q\sim\mathrm{Unif}\;\HS(\polspace)}   (\hatmu_{\mathrm{rel}}^2)_{\mathrm{av}}(q)=\mean\limits_{q\sim N_{\HC}^{< \sqrt{2N}}(\polspace)}  (\hatmu_{\mathrm{rel}}^2)_{\mathrm{av}}(q),
\end{equation}
where $N_{\HC}^{< \sqrt{2N}}(\polspace)$ is the \emph{truncated normal distribution} \cite[(17.25)]{condition}. Using \cref{mutilde} we get
	\begin{equation}\label{abc3}\mean\limits_{q\sim N_{\sqrt{2N}}(\polspace)}  (\hatmu_{\mathrm{rel}}^2)_{\mathrm{av}}(q) \leq 2N \mean\limits_{q\sim N_{\HC}^{< \sqrt{2N}}(\polspace)} (\hatmu^2)_{\mathrm{av}}(q) \leq 4N \mean\limits_{q\sim N(\polspace)} (\hatmu^2)_{\mathrm{av}}(q);
  \end{equation}
the second inequality by \cite[Lemma 17.25]{condition}. \cref{mu_expectation0} is now obtained by combining \cref{abc}, \cref{abc2} and \cref{abc3} with the following result.
\begin{prop}\label{halloxyz} We have
$$\mean\limits_{q\sim N(\polspace)}  (\hatmu^2)_{\mathrm{av}}(q)\leq \frac{20n}{d}.$$
\end{prop}
\begin{proof}
In the following we ease notation by writing for the average condition number
  $$\hatmu_{\mathrm{av}}^2(q):=(\hatmu^2)_{\mathrm{av}}(q).$$
By \cref{part2.2:integration} we have
	\begin{equation}\label{xyzabc}\mean\limits_{q\sim N(\polspace)}  \hatmu_{\mathrm{av}}^2(q)=\frac{\mathrm{vol}\;\HS(\HC^n)}{2\pi^{n+1}\cD(n,d)} \int_{\eta\in\HC} (d-1)^2\; \widehat{E}(\eta)\; \norm{\eta}^{2(d-2)}\; e^{-\norm{\eta}^{2(d-1)}} \d \eta,\end{equation}
where
	\begin{equation}\label{gammaeq100}
	\widehat{E}(\eta):=E(\eta^{d-1})= \mean\limits_{A,a,h} \hatmu(q,e_1,\eta)^2 \norm{\det(\sqrt{d}A-\eta^{d-1}I_{n-1})}^2
	\end{equation}
and where $A\sim N(\HC^{(n-1)\times(n-1)})$, $a\sim N(\HC^{n-1})$, $h\sim N(R(e_1)^n)$ are independent, and where
\begin{equation}\label{q_identity}
q= \eta^{d-1}X_1^d e_1 + \sqrt{d}X_1^{d-1}\begin{bmatrix} a^T\\ A\end{bmatrix} (X_2,\ldots,X_n)^T + h.\end{equation}
\vspace{-1cm}
\begin{claim} We have
	\begin{align*}
	&\widehat{E}(\eta)(d-1)^2 \norm{\eta}^{2(d-2)}
	\leq n!d^{n-1} \big(1+(d-1)\norm{\eta}^{2(d-2)}\big) \sum_{k=0}^{n-1} \frac{1}{k!} \left(\frac{\norm{\eta}^{d-1}}{d}\right)^{2k}.
	\end{align*}
\end{claim}
If the claim where true, we would have
\begin{equation*}
\mean\hatmu_{\mathrm{av}}^2(q) \leq \frac{n!d^{n-1}\mathrm{vol}\;\HS(\HC^n)}{2\pi^{n+1}\cD} \int_{\eta\in \HC} (1+(d-1)\norm{\eta}^{2(d-2)}) e^{-\norm{\eta}^{2(d-1)}} \sum_{k=0}^{n-1} \frac{\norm{ \eta}^{2(d-1)k}}{k! d^k}\,\d \eta,
\end{equation*}
where $\cD:=\cD(n,d)=d^n-1$. Observe that the integrand is independent of the argument of $\eta$. Substituting $r:=\norm{\eta}$, so that $\d\eta = r\,\d r\, \d \theta$, shows
	\begin{equation*}
	\mean\limits_{q}\mu_{\mathrm{av}}^2(q)\leq \frac{(n-1)!nd^{n-1}\mathrm{vol}\,\HS(\HC^n)}{\pi^{n}\cD} \int_{r\geq 0} (r+(d-1)r^{2(d-2)+1}) e^{-r^{2(d-1)}} \sum_{k=0}^{n-1} \frac{r^{2(d-1)k}}{k! d^k}\d r.
	\end{equation*}
Making the change of variables $t:=r^{2(d-1)}$, interchanging summation and integration and using the fact that $\mathrm{vol}\;\HS(\HC^n) = 2\pi^n/(n-1)!$ yields
	\begin{align}
	\mean\limits_{q\sim N(0,\frac{1}{2}I)}\mu_{\mathrm{av}}^2(q)
	\leq\; &\frac{ nd^{n-1}}{\cD(d-1)} \sum_{k=0}^{n-1}\;\int_{t\geq 0} \left(t^{\frac{1}{d-1}-1}+(d-1)\right)  \frac{t^{k}e^ {-t}}{k! d^k}\;\d t\nonumber\\
	\;=\;&\frac{ nd^{n-1}}{\cD(d-1)} \sum_{k=0}^{n-1}\frac{\Gamma\big(k+\frac{1}{d-1}\big) + (d-1)\, \Gamma\big(k+1\big)}{d^k k!}\nonumber\\
\;=\;&\frac{ nd^{n-1}}{\cD(d-1)}\left( \sum_{k=0}^{n-1}\frac{\Gamma\big(k+\frac{1}{d-1}\big)}{d^k k!}+ (d-1)\sum_{k=0}^{n-1}\frac{1}{d^k}\right)\label{gammaeq0}
\end{align}
For $0<\varepsilon<1$ and $k\geq 2$ we have $\Gamma(k+\varepsilon)\leq \Gamma(k+1)=k!$ and hence
	\begin{align}
	\sum_{k=0}^{n-1}\frac{\Gamma\big(k+\frac{1}{d-1}\big)}{d^k k!} &\leq \;\Gamma\left(\frac{1}{d-1}\right) +\frac{\Gamma\big(1+\frac{1}{d-1}\big)}{d} + \sum_{k=2}^n\frac{1}{d^k}\nonumber\end{align}
Since $d\geq 2$ this is less or equal to
\begin{align*}
\Gamma\left(\frac{1}{d-1}\right) +\frac{\Gamma\big(1+\frac{1}{d-1}\big)}{d} + \sum_{k=2}^n\frac{1}{2^k}\;
	&\leq\;\Gamma\left(\frac{1}{d-1}\right) +\frac{\Gamma\big(1+\frac{1}{d-1}\big)}{d} +\frac{1}{2}\nonumber\\
	&\leq\;  (d-1)\frac{\sqrt{\pi}}{2} +\frac{\sqrt{\pi}}{2d} + \frac{1}{2}\; \leq \;3d;
\end{align*}
the last inequality by \cref{gamma_lemma}. Using that $d\geq 2$ we also have $$ (d-1)\sum_{k=0}^{n-1}\frac{1}{d^k} \leq d \sum_{k=0}^{n-1}\frac{1}{2^k} \leq 2d.$$ We have now found upper bounds for both summands in \cref{gammaeq0}. They imply that
	$$\mean\limits_{q\sim N(0,\frac{1}{2}I)}\mu_{\mathrm{av}}^2(q)
	\leq \frac{ 5nd^{n}}{\cD(d-1)} =5n\, \frac{d^{n}}{(d^n-1)(d-1)}\, \stackrel{d\geq 2}{\leq}\, \frac{20n}{d}.$$
This shows the assertion of \cref{halloxyz}. It remains to prove the claim.

\textit{Proof of the claim.}
The following proof follows ideas that we have found in \cite[Section~7]{Armentano2015a}. Recall from \cref{q_identity} the functional dependence of $q$ on $A,a,h$ and note that, by \cref{jacobian2},
\begin{align*}
  \deriv{\cF_q}{(e_1,\eta)}
  =\begin{bmatrix}
  (d-2)\eta^{d-1} & a^T & -(d-1)\eta^{d-2} \\
  0 & A-\eta^{d-1}I_{n-1} & 0\end{bmatrix}
\end{align*}
so that we have
\begin{equation}\label{mu_identification}
\hatmu(q,e_1,\eta)=
\Norm{\restr{\begin{bmatrix}
(d-2)\eta^{d-1} & \sqrt{d}a^T & -(d-1)\eta^{d-2} \\
0 & \sqrt{d}A-\eta^{d-1}I_{n-1} & 0\end{bmatrix}}{e_1^\perp\times\HC}^{-1}}^2.
\end{equation}
By \cref{gammaeq100} and \cref{mu_identification}, we now have
	\begin{equation*}
\widehat{E}(\eta)=\mean\limits_{A,a,h} \Norm{\restr{\begin{bmatrix}
   (d-2)\eta^{d-1} & \sqrt{d}a^T & -(d-1)\eta^{d-2} \\
   0 & \sqrt{d}A-\eta^{d-1}I_{n-1} & 0\end{bmatrix}}{e_1^\perp\times\HC}^{-1}}^2 \norm{\det(\sqrt{d}A-\eta^{d-1}I)}^2
	\end{equation*}
Let us put $b:=\sqrt{d}a$ and $B:=\sqrt{d}A-\eta^{d-1}I$ and
	\begin{equation}\label{derivative_expansion2}
	S :=\begin{bmatrix}   \sqrt{d} a^T & -(d-1)\eta^{d-2} \\ \sqrt{d}A -\eta^{d-1}I & 0\end{bmatrix} =\begin{bmatrix}   b^T & -(d-1)\eta^{d-2} \\ B & 0\end{bmatrix}.
	\end{equation}
Note that $\det(S)=-(d-1)\eta^{d-2}\det(B)$, so that
\begin{equation*}
  \widehat{E}(\eta) \;(d-1)^2 \norm{\eta}^{2(d-2)} = \mean\limits_{A,a,h} \; \Norm{S^{-1}}^2\norm{\det S}^2.
\end{equation*}
We use $\Norm{S^{-1}}^2\leq \Norm{S^{-1}}^2_F$, where $\Norm{\cdot}^2_F$ is the Frobenius norm. Moreover, $S$ is independent of $h$ so we may omit it in the expectation. This yields
\begin{equation}
	\label{z3}\widehat{E}(\eta) \;(d-1)^2 \norm{\eta}^{2(d-2)} \leq \mean\limits_{A,a} \; \Norm{S^{-1}}_F^2\norm{\det S}^2.
	\end{equation}
We enumerate the entries of $S$ with indices $0\leq i,j\leq n-1$, the entries of $B$ are enumerated with indices $1\leq i,j\leq n-1$.
Let $S^{i,j}\in\HC^{(n-1)\times(n-1)}$, $0\leq i,j\leq n-1$, denote the matrix that is obtained by removing from $S$ the $i$-th row and the $j$-th column and define $B^{i,j},1\leq i,j\leq n-1$, correspondingly. By Cramer's rule we have $$\Norm{S^{-1}}_F^2\norm{\det S}^2 = \sum_{0\leq i,j\leq n-1} \norm{\det S^{i,j}}^2.$$
We deduce from \cref{derivative_expansion2} that
\begin{equation*}
\norm{\det S^{i,j}}^2 =
\begin{cases}
0, &\text{if } i=0,\, j <n-1\\
\norm{\det B}^2, &\text{if } i=0,\,j=n-1\\
(d-1)^2\eta^{2(d-2)} \norm{\det B^{i,j}}^2, &\text{if } i>0,\,j<n-1\\
 \norm{\det B(i:b)}^2, &\text{if } i>0,\, j=n-1
\end{cases},
\end{equation*}
where $B(i:b)$ is the matrix that is obtained from $B$ by replacing the $i$-th row of $B$ with $b$. We have $\mathrm{Var}(B_{i,j})= d\;\mathrm{Var}(A_{i,j})=d$ for $1\leq i,j\leq n-1$, so that
\begin{equation*}
d\sum_{j=1}^{n-1}  \mean\norm{\det B^{i,j}}^2=\sum_{j=1}^{n-1}  \mathrm{Var}(B_{i,j}) \mean\norm{\det B^{i,j}}^2.
\end{equation*}
From \cref{equation_random_matrix} we get for fixed $1\leq i \leq n-1$
\begin{equation}\label{r5}
\sum_{j=1}^{n-1}  \mathrm{Var}(B_{i,j}) \mean\norm{\det B^{i,j}}^2 \leq \mean \norm{\det B}^2.
\end{equation}
We moreover have $\mean b =0$ and for $1\leq j\leq n-1$ we have $\mathrm{Var}(b_j)= d$. Using \cref{equation_random_matrix} again, we obtain for $1\leq i \leq n-1$
\begin{align*}
\mean \norm{\det B(i:b)}^2 &= \mean \norm{\det B(i:0)}^2+\sum_{j=1}^{n-1} \mathrm{Var}(b_j)\mean\norm{\det B^{i,j}}^2
=  d \sum_{j=1}^{n-1} \mean\norm{\det B^{i,j}}^2.
\end{align*}
Combining this with \cref{r5} we get
$ \mean \norm{\det B(i:b)}^2\leq\mean\norm{\det B}^2.$
We conclude that
	\begin{align*}
	\mean \Norm{S^{-1}}_F^2\norm{\det S}^2
	\leq &\mean \norm{\det B}^2 + \frac{(n-1)(d-1)^2\norm{\eta}^{2(d-2)}}{d}\;\mean\norm{\det B}^2 + (n-1)\mean\norm{\det B}^2.
\end{align*}
The right-hand side of that inequality we bound by $n\left(1+(d-1)\eta^{2(d-2)}\right) \mean \norm{\det B}^2.$
We have
\begin{align*}
\mean \norm{\det B}^2 &= \mean \norm{\det(\sqrt{d}A-\eta^{d-1} I_{n-1})}^2
= d^{n-1}\mean \norm{\det\left(A-\frac{\eta^{d-1}}{\sqrt{d}} I_{n-1}\right)}^2
\end{align*}
and by \cref{auxiliary_lemma} (1) and (3) we have
 \begin{equation*}\mean \norm{\det\Big(A-\frac{\eta^{d-1}}{\sqrt{d}} I_{n-1}\Big)}^2 =(n-1)! \sum_{k=0}^{n-1} \frac{1}{k!}\, \left(\frac{\norm{\eta}^{d-1}}{d}\right)^{2k}.
 \end{equation*}
Combining everything we get
$$\mean \Norm{S^{-1}}_F^2\norm{\det S}^2\leq  n! d^{n-1}\left(1+(d-1)\eta^{2(d-2)}\right) \sum_{k=0}^{n-1} \frac{1}{k!}\, \left(\frac{\norm{\eta}^{d-1}}{d}\right)^{2k}.$$
Plugging this into \cref{z3} implies the claim.
\end{proof}
\subsection{Trigonometry}
The following proposition basically says that for pairs of points $(v_1,\eta_2)$, $(v_2,\eta_2)$ in the grey area of \cref{fig1}, that are close to each other, the points $v_1,v_2$ are also close to each other.
\begin{prop}\label{real_geometry_prop}
Let $m=m_1+m_2$, and $x:=(x_1,x_2)\in(\HR^{m_1}\backslash\set{0})\times\HR^{m_2}$ be a point, such that $\Norm{x_2} \leq \Norm{x_1}$. Let $y:=(y_1,y_2)\in (\HR^{m_1}\times\HR^{m_2})\backslash\set{0}$ and put $\delta:=d_\HS(x,y)$
\begin{enumerate}
\item If $\delta < \pi/4$, then $y_1\neq 0$.
\item Suppose that $\delta < \frac{\pi}{16}$ and $x_1,y_1\in\HS(\HR^{m_1})$. Then $\Norm{x-y}\leq  \delta\sqrt{8}.$
\end{enumerate}
\end{prop}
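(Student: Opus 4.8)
The plan is to treat the two parts separately: part (1) by a short argument by contradiction that exploits $\Norm{x_2}\le\Norm{x_1}$, and part (2) by a trigonometric estimate after normalising $x$ and $y$.

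\emph{Part (1).} Since $\delta=d_\HS(x,y)<\pi/4$ we have $\cos\delta=\frac{x^Ty}{\Norm{x}\,\Norm{y}}>\frac{1}{\sqrt2}$. I would argue by contradiction: suppose $y_1=0$. Then $x^Ty=x_2^Ty_2$, so Cauchy--Schwarz gives $|x^Ty|\le\Norm{x_2}\,\Norm{y_2}=\Norm{x_2}\,\Norm{y}$, whence $\cos\delta\le\Norm{x_2}/\Norm{x}$. But $\Norm{x}^2=\Norm{x_1}^2+\Norm{x_2}^2\ge2\Norm{x_2}^2$ by hypothesis, so $\Norm{x_2}/\Norm{x}\le\frac{1}{\sqrt2}$, contradicting $\cos\delta>\frac{1}{\sqrt2}$. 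Hence $y_1\neq0$.

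\emph{Part (2).} The plan is to pass to the unit vectors $u:=x/\Norm{x}$ and $v:=y/\Norm{y}$, which satisfy $u^Tv=\cos\delta$, and to set $a:=\Norm{u_1}=1/\Norm{x}$ and $b:=\Norm{v_1}=1/\Norm{y}$ (here I use $\Norm{x_1}=\Norm{y_1}=1$). From $\Norm{x_2}\le\Norm{x_1}$, i.e.\ $\Norm{u_2}\le\Norm{u_1}$, together with $\Norm{u_1}^2+\Norm{u_2}^2=1$, one gets $a\in[1/\sqrt2,\,1]$, and clearly $b\in(0,1]$. The law of cosines then gives
\[
\Norm{x-y}^2=\Norm{x}^2+\Norm{y}^2-2\Norm{x}\Norm{y}\cos\delta=\frac{a^2+b^2-2ab\cos\delta}{a^2b^2},
\]
and I would bound this by showing the numerator is $\le\sin^2\delta$ (up to a negligible lower-order term) and the denominator is $>1/8$, so that $\Norm{x-y}^2\le8\sin^2\delta\le8\delta^2$. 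For the numerator, Cauchy--Schwarz applied to $\cos\delta=u_1^Tv_1+u_2^Tv_2$ (using $\Norm{u_2}^2=1-a^2$, $\Norm{v_2}^2=1-b^2$) gives $\cos\delta-ab\le\sqrt{(1-a^2)(1-b^2)}$; when $\cos\delta\ge ab$, squaring and simplifying yields exactly $a^2+b^2-2ab\cos\delta\le\sin^2\delta$, and in the remaining case $\cos\delta<ab$ one instead has $a,b\in(\cos\delta,1]$ and bounds the whole ratio directly (the resulting bound is then even smaller). For the denominator, the quadratic inequality $a^2+b^2-2ab\cos\delta\le\sin^2\delta$ forces $b\ge a\cos\delta-\sqrt{1-a^2}\,\sin\delta$; multiplying by $a$ and minimising over $a\in[1/\sqrt2,1]$, where the expression is increasing, gives $ab\ge\frac{1}{2}(\cos\delta-\sin\delta)=\frac{1}{\sqrt2}\cos\!\big(\delta+\frac{\pi}{4}\big)$, which for $\delta<\pi/16<\pi/12$ exceeds $\frac{1}{2\sqrt2}$, so $a^2b^2>1/8$. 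Assembling these inequalities finishes the proof; the hypothesis $\delta<\pi/16$ enters only through this last step.

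The step I expect to be the main obstacle is getting the constant $\sqrt8$ exactly right. Both the numerator and the denominator estimates leave some slack individually, but one must keep $\Norm{x-y}^2$ in the ratio form $\frac{a^2+b^2-2ab\cos\delta}{a^2b^2}$ rather than expanding it as $(\Norm{x}-\Norm{y})^2+2\Norm{x}\Norm{y}(1-\cos\delta)$ and bounding the two summands separately; the latter route loses the cross-dependence between $\Norm{x}-\Norm{y}$ and $\delta$ and overshoots $8\delta^2$. Once the reduction to the scalars $a,b$ and the inequality $a^2+b^2-2ab\cos\delta\le\sin^2\delta$ are in place, the rest is a routine (if slightly fiddly) verification of numerical inequalities for $\delta\in[0,\pi/16]$.
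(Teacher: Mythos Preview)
Your proof is correct and takes a genuinely different, more elementary route than the paper.

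For part~(1), the paper parametrises $\HS(\HR^m)$ via $(u,v,\phi)\mapsto(\cos\phi\,u,\sin\phi\,v)$ with $u\in\HS(\HR^{m_1})$, $v\in\HS(\HR^{m_2})$, and shows $\delta\ge|\phi_x-\phi_y|$, forcing $\phi_y<\pi/2$. Your Cauchy--Schwarz contradiction is shorter and equivalent in spirit.

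For part~(2), the paper keeps the $(\phi_x,\phi_y)$ parametrisation, writes $\Norm{x-y}^2$ as an explicit function $\ell(\phi_x,\delta)$ (with $\phi_y=\phi_x+\delta$), and then appeals to two \emph{computer-based calculations}: one to show $\partial\ell/\partial\phi_x>0$ on the relevant range, reducing to $\phi_x=\pi/4$, and a second to verify $8\delta^2-\ell(\pi/4,\delta)\ge0$ for $|\delta|\le\pi/16$. Your argument replaces both machine checks by the single algebraic identity $a^2+b^2-2ab\cos\delta\le\sin^2\delta$ (a direct consequence of Cauchy--Schwarz on the unit vectors $u,v$ when $\cos\delta\ge ab$), together with the easy lower bound $ab\ge\tfrac{1}{\sqrt2}\cos(\delta+\pi/4)>\tfrac{1}{2\sqrt2}$ for $\delta<\pi/12$. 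The residual case $\cos\delta<ab$ forces $a,b\in(\cos\delta,1]$, and a convexity argument on $(1/a,1/b)\mapsto\Norm{x-y}^2$ over that small square gives a bound well below $8\delta^2$; you might spell this out in one line rather than leaving it implicit.

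What your approach buys is a fully hand-checkable proof with the exact constant $\sqrt8$; what the paper's approach buys is a more mechanical reduction at the cost of relying on numerical verification. Your observation that one must keep the ratio form $\tfrac{a^2+b^2-2ab\cos\delta}{a^2b^2}$ rather than splitting into $(\Norm{x}-\Norm{y})^2+2\Norm{x}\Norm{y}(1-\cos\delta)$ is exactly the point: the paper effectively does the same by tracking $\phi_x,\phi_y$ jointly rather than separately.
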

\begin{proof}
Let us first prove the first part. Without restriction we may assume that $\Norm{x}=\Norm{y}=1$. Consider the map
	\begin{align*}
	\Psi: \HS(\HR^{m_1})\times \HS(\HR^{m_2}) \times [0,\pi/2]&\to \HS(\HR^{m_1+m_2}),\;		(u,w,\phi)\mapsto \left(\cos(\phi)u,\sin(\phi)v\right)
	\end{align*}
One easily shows that $\Psi$ is surjective. Let $\Psi(u_x,w_x,\phi_x)=x$ and $\Psi(u_y,w_y,\phi_y)=y$. Then,
	\begin{align*}\cos(\delta) &= \cos(\phi_x)\cos(\phi_y)\langle u_x,u_y\rangle + \sin(\phi_x)\sin(\phi_y)\langle w_x,w_y\rangle \\
	&\leq \cos(\phi_x)\cos(\phi_y)
	 + \sin(\phi_x)\sin(\phi_y) = \cos(\phi_x-\phi_y)
	\end{align*}
and hence $\delta\geq \norm{\phi_x-\phi_y}$. From our assumption $\Norm{x_2}\leq \Norm{x_1}$ follows $0\leq \phi_x\leq \pi/4$, which implies $0\leq \phi_y \leq \delta+\phi_x< \pi/2$. This implies $\Norm{y_1}=\cos(\phi_y)\neq 0$. The first assertion follows from this.

We now prove the second claim. Write  $\Psi(u_x,w_x,\phi_x)=\Norm{x}^{-1} x$ and we write  $\Psi(u_y,w_y,\phi_y)=\Norm{y}^{-1}y$. By assumption, we have $\cos(\phi_x)\neq 0$ and, by the first part, we have $\cos(\phi_y)\neq 0$. Moreover, $\Norm{x_1}=\Norm{y_1}=1$ implies $\Norm{x}^2  = 1 + \tan^2(\phi_x),$ and $\Norm{y}^2  = 1 + \tan^2(\phi_y)$. The law of cosines yields $$\Norm{x-y}^2 = \Norm{x}^2 + \Norm{y}^2 - 2\Norm{x}\Norm{y} \cos(\delta).$$ From this we obtain
	$$\Norm{x-y}^2 = 2  + \tan^2\phi_x + \tan^2\phi_y -2 \sqrt{(1+\tan^2\phi_x)(1+\tan^2\phi_y)} \cos(\delta).$$
Writing $\phi_y = \phi_x+\delta$ we obtain $\Norm{x-y}^2$ as a function of $\phi_x$ and $\delta$, which we denote by
$$L:[0,\frac{\pi}{4}]\times [-\frac{\pi}{16},\frac{\pi}{16}] \to \HR_{\geq 0},\: (\phi_x,\delta)\mapsto \Norm{x-y}^2.$$
A computer based calculation reveals that $\frac{\d L}{\d \phi_x} >0$. This implies
	$$\Norm{x-y}^2 \leq L(\tfrac{\pi}{4},\delta) = 3 + \tan(\pi/4+\delta)^2 -2 \sqrt{2(1+\tan(\pi/4+\delta)^2)} \cos(\delta).$$
Another computer based calculation shows $L(\tfrac{\pi}{4},0)=0$ and that $0$ is a global minimum of the function $\delta\mapsto 8\delta^2 - L(\tfrac{\pi}{4},\delta)$.
	\end{proof}
\subsection{Higher derivative estimate}\label{condition_number_thm_proof}
In this section we prove \cref{Theorem_4.3}. We will need the following lemma.
\begin{lemma}\label{condition_number_thm_lemma}
For $d\geq 2$ and $2\leq k\leq d$ we have that $$\frac{(d-1)!}{k!(d-k)!}\leq \left(\frac{d}{2}\right)^{k-1}.$$
\end{lemma}
\begin{proof}
Combine $k!\geq 2^{k-1}$ with $\frac{(d-1)!}{(d-k)!} \leq d^{k-1}$.
\end{proof}
The following is similar to the proof of \cite[Theorem 16.1]{condition}.
\begin{proof}[Proof of \cref{Theorem_4.3}]
We choose a representative $(v,\eta)\in\HS(\HC^n)\times\HC$, so that, by assumption, $(f,v,\eta)\in\hatV_\HS$. From \cref{useful_lemma2} we get $\Norm{v,\eta}\leq \sqrt{2}$. Moreover, for any $2\leq k\leq d$, using the submultiplicativity of the spectral norm, we have
 \begin{align}
 \Norm{\frac{1}{k!} \big(\restr{\deriv{\cF_f}{(v,\eta)}}{(v,\eta)^\perp}\big)^{-1} \cdot \derivk{\cF_f}{(v,\eta)}}
 &\leq\frac{1}{k!}\; \Norm{\restr{\big(\deriv{\cF_f}{(v,\eta)}}{(v,\eta)^\perp}\big)^{-1} } \Norm{ \derivk{\cF_f}{(v,\eta)}}\nonumber\\
 &= \frac{1}{k!}\;\mu(f,v,\eta) \Norm{ \derivk{\cF_f}{(v,\eta)}}.\label{gamma_to_condition0}
 \end{align}
 By the triangle inequality we have
	\begin{equation}
	\Norm{ \derivk{\cF_f}{(v,\eta)}} \leq \Norm{ \derivk{f}{v}} + \Norm{\derivk{\ell^{d-1}X}{(v,\eta)}}.\label{gamma_to_condition1}
	\end{equation}
Using $\Norm{f}=\Norm{v}=~1$ we get from \cite[Prop. 16.48]{condition}
\begin{equation}\label{gamma_to_condition2}
\Norm{\derivk{f}{v}} \leq \frac{d!}{(d-k)!}
\end{equation}
One easily sees that $$\Norm{\derivk{\ell^{d-1}X}{(v,\eta)}} \leq \Big(\sum_{i=1}^n \Norm{\derivk{\ell^{d-1}X_1}{(v,\eta)}}^2\Big)^\frac{1}{2}.$$ For $w_1,\ldots,w_k\in\HC^n $ let $w_i^j$ be the $j$-th entry of $w_i$ and denote partial derivatives by $\frac{\d}{\d X_i}$. Then
\begin{align*}
	\Norm{\derivk{\ell^{d-1}X_i}{(v,\eta)}}&= \max\limits_{w_1,\ldots,w_k\in\HS(\HC^n)} \;\norm{\sum_{1\leq i_1,\ldots,i_k\leq n} \frac{\d^k (\ell^{d-1}X_i)}{\d X_{i_1}\ldots\d X_{i_k}}(v,\eta) \prod_{j=1}^k w_j^{i_j}}\\
	&\leq \sum_{1\leq i_1,\ldots,i_k\leq n} \norm{\frac{\d^k (\ell^{d-1}X_i)}{\d X_{i_1}\ldots\d X_{i_k}}(v,\eta)}
	\end{align*}
The only $k$-th order derivatives of $\ell^{d-1}X_i$, that are non-zero, are $\ell^{d-1-k} X_i~\prod_{i=0}^{k-1}(d-1-i)$ and $ \ell^{d-k}\prod_{i=0}^{k-2} (d-1-i)$. This implies
	\begin{align*}
	\Norm{\derivk{\ell^{d-1}X}{(v,\eta)}} &\leq \frac{ (d-1)!}{(d-1-k)!} \; \norm{\eta}^{d-1-k} \norm{v_i} +  k \; \frac{(d-1)!}{(d-k)!} \; \norm{\eta}^{d-k}\leq \frac{ d!}{(d-k)!};
	\end{align*}
where we have used from \cref{useful_lemma2} that $\norm{\eta}\leq\Norm{v}=1$. This shows $\Norm{\derivk{\ell^{d-1}X}{(v,\eta)}} \leq \sqrt{n} \;\frac{ d!}{(d-k)!}$. Plugging this and \cref{gamma_to_condition2} into \cref{gamma_to_condition1} we get
$$\Norm{ \derivk{\cF_f}{(v,\eta)}} \leq (\sqrt{n}+1)\;\frac{d!}{(d-k)!}.$$
Using \cref{gamma_to_condition0} we obtain
	 \begin{align*}
 \Norm{v,\eta}^{k-1}\; \Norm{ \big(\deriv{\cF_f}{(v,\eta)}|_{(v,\eta)^\perp}\big)^{-1} \; \frac{\derivk{\cF_f}{(v,\eta)}}{k!}}
 &\leq  \sqrt{2}^{k-1}\;\mu(f,v,\eta)\; \frac{(\sqrt{n}+1)\;d!}{(d-k)!\;k!}\\
 	 &\leq  \sqrt{2n}^{k-1}\; \mu(f,v,\eta) \; \frac{2\;d!}{(d-k)!k!}\\
 	  &= \sqrt{2n}^{k-1}\; 2d\; \mu(f,v,\eta) \; \frac{(d-1)!}{(d-k)!k!}.
 \end{align*}
By \cref{condition_number_thm_lemma} we have $\frac{(d-1)!}{k!(d-k)!}\leq \left(\frac{d}{2}\right)^{k-1}$. Furthermore, by \cref{2dmu} (2) we have $1\leq 2d\,\mu(f,v,\eta)$, so that $2d\,\mu(f,v,\eta)\leq (2d\,\mu(f,v,\eta))^{k-1}$. Hence
$$\Norm{v,\eta}\; \Norm{\frac{1}{k!} \big(\deriv{\cF_f}{(v,\eta)}|_{(v,\eta)^\perp}\big)^{-1} \derivk{\cF_f}{(v,\eta)}}^\frac{1}{k-1}
 \leq \mu(f,v,\eta)\; d^2 \,\sqrt{2n},$$
from which the claim follows.
\end{proof}
\subsection{A Lipschitz estimate for the condition number}\label{lipschitz_sec}
We will have to quantify the behavior of the condition number $\mu(f,v,\eta)$ under small pertubations of both $f$ and $(v,\eta)$. In what follows we define the functions $\vartheta(\varepsilon):=1-(1-\varepsilon)^{-2}+\cos(\tfrac{\varepsilon}{4})$ and, as in \cref{Theta}, $\Theta(\varepsilon)= \vartheta(\varepsilon)-\varepsilon .$ Observe that
\begin{equation}\label{theta_ineq}
\forall \,0\leq\varepsilon\leq \frac{1}{5}:\; 0<\Theta(\varepsilon)\leq 1\quad\text{and}\quad \varepsilon<\vartheta(\varepsilon).
\end{equation}
This section is heavily inspired by \cite[Section 16.8]{condition}. The main results here are \cref{lipschitz1} and \cref{lipschitz3}
In order to prove them we will need the following lemma, which is similiar to \cite[Lemma~16.41]{condition}. Recall from \cref{gamma_def} the definition
$$\gamma(f,v,\eta):=\Norm{v,\eta}\;\max\limits_{k\geq 2} \Norm{\frac{1}{k!}\; \big(\deriv{\cF_f}{(v,\eta)}|_{(v,\eta)^\perp}\big)^{-1} \; \derivk{\cF_f}{(v,\eta)}}^\frac{1}{k-1}.$$
 and from \cref{u_and_delta} the definition $\Psi_\delta(u)=(1+\cos\delta)(1-u)^2-1,\; u,\delta\in\HR.$
\begin{lemma}\label{lipschitz0}
Let $(f,v,\eta)\in\hatW_\HS$ and let $(w,\xi)\in\myspaceS$. Put
$$\delta:=d_\HS((w,\xi),(v,\eta)), \quad \text{and}\quad u:=\delta\gamma(f,v,\eta)\sqrt{8}.$$
If $\delta<\frac{\pi}{16}$ and $ u <1$, then
	$$\Norm{ \big(\deriv{\cF_f}{(v,\eta)}|_{(v,\eta)^\perp}\big)^{-1}\; \deriv{\cF_f}{(w,\xi)}|_{(w,\xi)^\perp}}\leq  \frac{1}{(1-u)^2}.$$
If further $\Psi_\delta(u)>0$, then $\deriv{\cF_f}{(w,\xi)}|_{(w,\xi)^\perp}$ is invertible and we have
	$$\Norm{\deriv{\cF_f}{(w,\xi)}|_{(w,\xi)^\perp}^{-1} \;  \deriv{\cF_f}{(v,\eta)}|_{(v,\eta)^\perp}}\leq  \frac{(1-u)^2}{\Psi_\delta(u)}.$$
\end{lemma}
\begin{proof}
Recall that $\cF_f$ is a map $\HC^n\times \HC\to \HC^n$. We identify $\HC^{n+1}\cong \HC^n\times \HC$ and we denote by $\cL(\HC^{n+1},\HC^n)$ the vector space of $\HC$-linear maps $\HC^{n+1}\to\HC^n$, compare \cite[(15.2)~and~Lemma 15.9]{condition}. For any $f\in\cH$ we define
	$$\deriv{\cF_f}{}: \HC^{n+1} \to \cL(\HC^{n+1},\HC^n),\; (w,\xi)\mapsto \deriv{\cF_f}{(w,\xi)},$$
and write  $$\mathrm{D}^2_{(w,\xi)}\cF_f :=\deriv{(\deriv{\cF_f}{})}{(w,\xi)} \in \cL(\HC^{n+1},\cL(\HC^{n+1},\HC^n)).$$ Inductively we define $\derivk{\cF_f}{(w,\xi)}$. By construction, for fixed $k$ the map $\derivk{\cF_f}{(w,\xi)}$ is a \emph{multilinear} map $(\HC^{n+1})^{\times (k-1)} \to \HC^n$. We abbreviate
$$\derivk{\cF_f}{(w,\xi)} x^{k-1}:=\derivk{\cF_f}{(w,\xi)}(x,\ldots,x).$$
The Taylor expansion of $\deriv{\cF_f}{(w,\xi)}|_{(w,\xi)^\perp}$ around $(v,\eta)$ can then be expressed as
	$$\deriv{\cF_f}{(w,\xi)}|_{(w,\xi)^\perp} = \sum_{k=1}^d \frac{1}{(k-1)!}\;  \derivk{\cF_f}{(v,\eta)}|_{(w,\xi)^\perp} (w-v,\xi-\eta)^{k-1},$$
see also \cite[Lemma 16.41]{condition}. Then $(\deriv{\cF_f}{(v,\eta)}|_{(v,\eta)^\perp})^{-1} \;  \deriv{\cF_f}{(w,\xi)}|_{(w,\xi)^\perp} = P + B,$
where,
	\begin{align}P&=  \big(\deriv{\cF_f}{(v,\eta)}|_{(v,\eta)^\perp}\big)^{-1} \;  \deriv{\cF_f}{(v,\eta)}|_{(w,\xi)^\perp}\\
B&= \sum_{k=2}^d \frac{1}{(k-1)!} \big(\deriv{\cF_f}{(v,\eta)}|_{(v,\eta)^\perp}\big)^{-1} \;  \derivk{\cF_f}{(v,\eta)}|_{(w,\xi)^\perp} (w-v,\xi-\eta)^{k-1}.\label{B}
\end{align}
By \cref{projection_lemma} we have that $\Norm{P}\leq 1$. If in \cref{real_geometry_prop} we put $x=(v,\eta),y=(w,\xi)$, we obtain $\Norm{w-v,\xi-\eta} \leq \delta\sqrt{8}$.  Combining with \cref{B} we obtain
\begin{align*}
\Norm{B}&\leq\sum_{k=1}^d  k \Norm{\frac{1}{k!}\, \big(\deriv{\cF_f}{(v,\eta)}|_{(v,\eta)^\perp}\big)^{-1} \;  \derivk{\cF_f}{(v,\eta)}(w-v,\xi-\eta)^{k-1}}\\
&\leq \sum_{k=1}^d  k \Norm{\frac{1}{k!}\, \big(\deriv{\cF_f}{(v,\eta)}|_{(v,\eta)^\perp}\big)^{-1} \;  \derivk{\cF_f}{(v,\eta)}}\,\Norm{w-v,\xi-\eta}^{k-1}\\
&\leq \sum_{k=1}^d  k \Norm{v,\eta}^\frac{1}{k-1}\Norm{\frac{1}{k!}\big(\deriv{\cF_f}{(v,\eta)}|_{(v,\eta)^\perp}\big)^{-1} \derivk{\cF_f}{(v,\eta)}}\Norm{w-v,\xi-\eta}^{k-1}\\
&\leq \sum_{k=2}^d  k\left(\gamma(f,v,\eta)\,\delta\,\sqrt{8}\right)^{k-1} =   \sum_{k=2}^d  ku^{k-1}\leq (1- u)^{-2}-1;
\end{align*}
the first inequality by the triangle inequality, the second by \cref{spectral_norm} and the third because $\Norm{v}=1$. This implies
$$\Norm{\big(\deriv{\cF_f}{(v,\eta)}|_{(v,\eta)^\perp}\big)^{-1} \;  \deriv{\cF_f}{(w,\xi)}|_{(w,\xi)^\perp}} = \Norm{P+B} \leq \Norm{P}+\Norm{B} \leq (1-u)^{-2},$$
which is the first assertion.

It remains prove the second assertion. \cref{projection_lemma} tells us that $P$ is invertible and that $\Norm{P^ {-1}}\leq (\cos \delta)^{-1}.$ By our assumption $\Psi_\delta(u) > 0$ we have
$$\Norm{P^{-1}}\Norm{B} \leq (\cos\delta)^{-1}\; \left((1-u)^{-2}-1\right) < 1.$$
From \cite[Lemma 15.7]{condition} we get that $P+B$ is invertible and that
	$$\Norm{(P+B)^{-1}}\leq \frac{\Norm{P^{-1}}}{1-\Norm{B}\Norm{P^{-1}}} \leq \frac{(\cos \delta)^{-1}}{1- (\cos\delta)^{-1}\left((1-u)^{-2}-1\right)}.$$
One shows that the right hand side equals $ (1-u)^2/\Psi_\delta(u)$. This finishes the proof.
\end{proof}
The next proposition is of most importance in this section.
\begin{prop}\label{lipschitz1}
\begin{enumerate}
\item
Let $f,g\in\HS(\polspace)$ and $(v,\eta)\in \cP$. If
$$d\, \mu(f,v,\eta)\;d_\HS(f,g)\leq \varepsilon < 1,$$
then
$$\mu(g,v,\eta)\leq \frac{1}{1-\varepsilon}\; \mu(f,v,\eta).$$
\item
Let $(f,v,\eta)\in\hW_\HS$ and $(w,\xi)\in\cP$. If
$$4\;d^2 \sqrt{n}\; \mu(f,v,\eta)\; d_\HP((w,\xi),(v,\eta))\leq \varepsilon \leq \frac{1}{4},$$
then
$$(1-\varepsilon)^2\mu(f,v,\eta)\leq \mu(f,w,\xi)\leq \frac{1}{\vartheta(\varepsilon)}\; \mu(f,v,\eta).$$
\end{enumerate}
\end{prop}
\begin{proof}
We prove the first assertion. Let $(v,\eta)\in \HS(\HC^n)\times \HC$ be a representative for $(v,\eta)\in\cP$. By assumption we have $$d\, \mu(f,v,\eta) \,d_\HS(f,g)\leq \varepsilon < 1.$$ This implies that $\mu(f,v,\eta)<\infty$, which means that $\deriv{\cF_f}{(v,\eta)}|_{(v,\eta)^\perp}$ is invertible. We denote $A:= \deriv{\cF_f}{(wv,\eta)}|_{(v,\eta)^\perp}$, so that $\mu(f,v,\eta) = \Norm{A^{-1}}$.
Futher, we write $$\Delta:= [\deriv{(g-f)}{w},0]|_{(v,\eta)^\perp},$$ so $A + \Delta = \deriv{g}{(v,\eta)}|_{(v,\eta)^\perp}$. We have
	$$\Norm{\Delta}\leq \Norm{[\deriv{(g-f)}{v},0]}= \Norm{\deriv{(g-f)}{v}}\leq d  \Norm{g-f};$$
the last inequality by \cite[Lemma 16.46]{condition}. Since $\Norm{f}=\Norm{g}=1$, we have that $\Norm{f-g}\leq d_\HS(f,g)$. Therefore,
	$$\Norm{A^{-1}}\Norm{\Delta} \leq d\Norm{A^{-1}} \Norm{g-f}\leq d\;\mu(f,v,\eta)\; d_\HS(f,g) \leq \varepsilon < 1.$$
We can apply \cite[Lemma 15.7]{condition} to deduce that $A+\Delta$ is invertible and that
	\begin{equation*}
	\mu(g,v,\eta) = \Norm{(A+\Delta)^{-1}}
	\leq \frac{\Norm{A^{-1}}}{1-\Norm{\Delta}\Norm{A^ {-1}}}
	\leq \frac{1}{1-\varepsilon}\;\mu(f,v,\eta).
	\end{equation*}
This proves the first assertion.

To prove the second assertion for the h-eigenpairs $(v,\eta),(w,\xi)\in\cP$ we choose representatives $(v,\eta),(w,\xi)\in\HS(\HC^n)\times \HC$, such that $$\delta:=d_\HP((v,\eta),(\xi,\eta))=d_\HS((v,\eta),(\xi,\eta)).$$
Put $u:=\delta\gamma(f,v,\eta)\sqrt{8}$.  By \cref{Theorem_4.3} and by our assumption we have
	$$u\leq 4  \delta d^2\sqrt{n}  \,\mu(f,v,\eta)\leq \varepsilon\leq \frac{1}{4} <1.$$
Using that  $d\geq 2$ and from \cref{2dmu} (2) that $2d\mu(f,v,\eta)\geq 1$ we obtain $\delta\leq \varepsilon/4<\frac{\pi}{16}.$
Recall that $\Psi_\delta(u)/(1-u)^2 = 1+\cos\delta - (1-u)^{-2}.$
The right hand side is decreasing in $\delta$ and in $u$. This implies
$$\frac{\Psi_\delta(u)}{(1-u)^2}=1+\cos\delta - (1-u)^{-2} \geq 1+\cos (\varepsilon/4) - (1-\varepsilon)^{-2}=\vartheta(\varepsilon).$$ We get $$\frac{\Psi_\delta(u)}{(1-u)^2}\geq \vartheta(\varepsilon) > 0,$$ by \cref{theta_ineq}. Summarizing, we have  $\delta<\frac{\pi}{16}$, $u <1$ and $\Psi_\delta(u)>0$. We can therefore apply \cref{lipschitz0} to deduce that $\mu(f,w,\xi)<\infty$ and, using the submultiplicativity of the spectral norm, that
	\begin{align*}
	\mu(f,w,\xi)&= \Norm{\deriv{\cF_f}{(w,\xi)}_{(w,\xi)^\perp}^{-1}\; \deriv{\cF_f}{(v,\eta)}_{(v,\eta)^\perp}\; \deriv{\cF_f}{(v,\eta)}_{(v,\eta)^\perp}^{-1}}\\
	&\leq  \Norm{\deriv{\cF_f}{(w,\xi)}_{(w,\xi)^\perp}^{-1}\; \deriv{\cF_f}{(v,\eta)}_{(v,\eta)^\perp}}\; \Norm{\deriv{\cF_f}{(v,\eta)}_{(v,\eta)^\perp}^{-1}}\\
&\leq \frac{(1-u)^2}{\Psi_\delta(u)}\; \mu(f,v,\eta)\\ &\leq   \frac{1}{\vartheta(\varepsilon)}\; \mu(f,v,\eta);
	\end{align*}
the second-to-last inequality by \cref{lipschitz0}. This proves the first inequality.

Similiary we obtain
	\begin{align*}
	\mu(f,v,\eta)&= \Norm{\deriv{\cF_f}{(v,\eta)}_{(v,\eta)^\perp}^{-1}\; \deriv{\cF_f}{(w,\xi)}_{(w,\xi)^\perp}\; \deriv{\cF_f}{(w,\xi)}_{(w,\xi)^\perp}^{-1}}\\
&\leq \frac{1}{(1-u)^2}\; \mu(f,w,\xi)\\
 &\leq \frac{1}{(1-\varepsilon)^2}\; \mu(f,w,\xi).
	\end{align*}
This finishes the proof.
\end{proof}
Next we combine the statements from \cref{lipschitz1}.
\begin{cor}\label{lipschitz3}
Let $(f_1,v_1,\eta_1),(f_2,v_2,\eta_2)\in\hW_\HS$. If
	$$d\, \mu(f_1,v_1,\eta_1)\; \max\set{ d_\HS(f_1,f_2),  4\; d\sqrt{n}\; d_\HP((v_1,\eta_1),(v_2,\eta_2)) } \leq \varepsilon \leq \frac{1}{5},$$
then
	$$ \Theta(\varepsilon) \;\mu(f_1,v_1,\eta_1)\leq \mu(f_2,v_2,\eta_2) \leq \frac{1}{\Theta(\varepsilon)} \;\mu(f_1,v_1,\eta_1).$$
\end{cor}
\begin{proof}
We may apply \cref{lipschitz1}(2) to obtain
	\begin{equation}\label{lipschitz3.1}
	\mu(f_1,v_2,\eta_2)\leq \frac{1}{\vartheta(\varepsilon)} \; \mu(f_1,v_1,\eta_1).
	\end{equation}
This yields $d \, \mu(f_1,v_2,\eta_2)\;  d_\HS(f,g) \leq \varepsilon/\vartheta(\varepsilon)=:\varepsilon' < 1,$ by \cref{theta_ineq}. Applying \cref{lipschitz1}(1) yields
	\begin{equation}\label{lipschitz3.2}
	\mu(f_2,v_2,\eta_2) \leq \frac{1}{1-\varepsilon'} \;  \mu(f_1,v_2,\eta_2)
	\end{equation}
Combining \cref{lipschitz3.1} and \cref{lipschitz3.2} gives
	$$\mu(f_2,v_2,\eta_2) \leq \frac{1}{(1-\varepsilon')\vartheta(\varepsilon)} \; \mu(f_1,v_1,\eta_1) = \frac{1}{\vartheta(\varepsilon)-\varepsilon} \; \mu(f_1,v_1,\eta_1)$$
This yields the right inequality. If $\mu(f_1,v_1,\eta_1)\leq \mu(f_2,v_2,\eta_2)$, the left inequality is trivial. If $\mu(f_2,v_2,\eta_2)\leq \mu(f_1,v_1,\eta_1)$, then
	\begin{align*}
	&d \; \mu(f_2,v_2,\eta_2)\;   \max\set{ d_\HS(f_1,f_2),  4\; d\sqrt{n}\; d_\HP((v_1,\eta_1),(v_2,\eta_2)) }\\
	 \leq \;&d \; \mu(f_1,v_1,\eta_1)\; \max\set{ d_\HS(f_1,f_2),  4\; d\sqrt{n}\; d_\HP((v_1,\eta_1),(v_2,\eta_2)) }\\
   \leq\;&\;  \varepsilon .
	\end{align*}
and we may interchange the roles of $(f_1,v_1,\eta_1)$ and $(f_2,v_2,\eta_2)$ to conclude.
\end{proof}
\section{Complexity analysis}
In this section we prove the previously stated complexity analyses for the four algorithms EALH, LVEALH and LVEALHWS and Draw-from-$\rho^\star$; that is, we will prove \cref{Theorem_4.4}, \cref{Theorem_4.6}, \cref{Theorem_4.7} and \cref{Theorem_4.8}.
\subsection{Complexity of algorithm \textsc{E-ALH}}
\label{ALH-proof}
Recall from \cref{Theta} that we have defined the function $\Theta(\varepsilon)= 1-(1-\varepsilon)^{-2}+\cos(\tfrac{\varepsilon}{4})-\varepsilon.$
We need the following lemma to prove \cref{Theorem_4.4}.
\begin{lemma}\label{ALH-lemma}
Let $(q,v,\eta),(q',w,\xi)\in \hW_\HS$, $q\neq -q'$, such that the geodesic path $E\subset \HS(\cH^n)$ connecting $q$ and $q'$ does not intersect the extended eigendiscriminant variety $\cE$ (see \cref{prop_eigendiscriminant}). Let $\varepsilon < \frac{1}{5}$ and $\chi < 1$.  Then,
$$d_\HS(q,q') \leq  \frac{ \varepsilon\chi\Theta(\varepsilon)}{4 d^{2} \sqrt{n} \mu(q,v,\eta)^2}$$
implies that
 $$d_\HP((v,\eta),(w,\xi)) \leq  \frac{\varepsilon\chi}{4d^{2} \sqrt{n}  \mu(q,v,\eta)}.$$
\end{lemma}
\begin{proof}
Let $E=\cset{q_\tau}{0\leq \tau \leq 1}.$ If $E$ does not cross $\cE$, then, there exists a unique lifted path $L=\cset{(q_\tau,v_\tau,\eta_\tau)}{0\leq \tau \leq 1}\subset \hatV$. \cref{useful_lemma2} assures that all h-eigenpairs in that path are indeed elements in $\cP$, so that the path is a subset of $\hatV$ (it could be a priori that one of the h-eigenpairs is continuated towards the trivial solution $[0:1]$).

Suppose now that the assertion were false. Then there exists some $0\leq \tau^* < 1$, such that
	$$d_\HP((v,\eta),(v_{\tau^*},\eta_{\tau^*}))=\int_0^{\tau^*} \Norm{\dot{v}_\tau,\dot{\eta}_\tau}\d\tau  =  \frac{\varepsilon\chi}{4d^{2} \sqrt{n} \mu(q,v,\eta)}.$$
Let $0\leq \tau\leq \tau^*$. From \cref{2dmu} (3) we obtain $\Norm{\dot{v}_\tau,\dot{\eta}_\tau} \leq \mu(q_\tau,v_\tau,\eta_\tau) \Norm{\dot{q}_\tau}$ and hence
	$$ \frac{\varepsilon\chi}{4d^{2} \sqrt{n} \mu(q,v,\eta)}\leq \int_0^{\tau^*} \mu(q_\tau,v_\tau,\eta_\tau) \Norm{\dot{q}_\tau}\d\tau  .$$
Using $d_\HP((v,\eta),(v_\tau,\eta_\tau))\leq d_\HP((v,\eta),(v_{\tau^*},\eta_{\tau^*}))$ and $d_\HS(q,q_\tau)\leq d_\HS(q,q')$,
 that $2d\mu(q,v,\eta)\geq 1$ (see \cref{2dmu} (2)) and $\Theta(\varepsilon)\leq 1$ (see \cref{theta_ineq}) we have, due to our assumption,
	$$d\, \mu(q,v,\eta)\; \max\set{ d_\HS(q,q_\tau),  4\; d\sqrt{n}\; d_\HP((v,\eta),(v_\tau,\eta_\tau)) } \leq \varepsilon \leq \frac{1}{5}.$$
\cref{lipschitz3} implies that $\mu(q_\tau,v_\tau,\eta_\tau)\leq \Theta(\varepsilon)^{-1} \mu(q,v,\eta)$. Hence,
	\begin{equation*}
	\int_{0}^{\tau^*}\Norm{(\dot{v}_\tau,\dot{\eta}_\tau)} \d \tau\leq \frac{\mu(q,v,\eta)}{\Theta(\varepsilon)} \int_{0}^{\tau^*}  \Norm{\dot{q}_\tau} \d \tau
	<   \frac{\mu(q,v,\eta)}{\Theta(\varepsilon)} d_\HS(q,q'),
	\end{equation*}
contradicting our assumption.
\end{proof}
\begin{proof}[Proof of \cref{Theorem_4.4}]
The proof is adapted from \cite[p. 335--338]{condition}.

Let $E:=E_{f,g}=\cset{q_\tau}{0\leq \tau\leq 1}\subset \HS(\cH^n)$ be the geodesic path connecting $f$ and~$g$ and let $$L:=L_{f,g}=\cset{(q_\tau,v_\tau,\eta_\tau)}{0\leq \tau\leq 1}$$
be the lifting of $E$ at $(g,v,\eta)$. As in the proof of \cref{ALH-lemma} we argue that, if $E\cap\bSigma=\emptyset$, then $L\subset\hV$ (this means the lifted path does not contain the trivial solution $[0:1]$). Let
  $$K:=K(f,(g,v,\eta))$$
denote the number of iterations of algorithm EALH and let
	$$0=\tau_0 < \tau_1 <\ldots < \tau_K =1,\quad (w,\xi)=(w_0,\xi_0),\ldots,(w_K,\xi_K).$$
be the sequences of $\tau$'s and $(w,\xi)$'s generated by EALH, respectively. Further, for $0\leq i\leq K$ we denote $q_i:=q_{\tau_i}$ and $(v_i,\eta_i):=(v_{\tau_i},\eta_{\tau_i})$. We put
	\begin{equation*}
	\varepsilon:=0.04,\quad \Theta:=\Theta(\varepsilon)\approx 0.87488,\quad \chi:=2\Theta-1 \approx 0.74976 < 1.
	\end{equation*}
\begin{claim}
For all $i\in\set{0,\ldots,K}$ the following holds.
	\begin{equation}\label{ALH1.1}
	4d^2\sqrt{n} \mu(q_i,v_i,\eta_i)\, d_\HP((w_i,\xi_i),(v_i,\eta_i)) \leq \varepsilon
	\end{equation}
\end{claim}
\noindent\textit{Proof of the Claim} We will proceed by induction. By construction \cref{ALH1.1} holds for $i=0$. Let us assume that \cref{ALH1.1} holds for all $i\leq j$ for some fixed $j$. From \cref{lipschitz1} (2) we get that
	\begin{equation}\label{ALH1.11}
	(1-\varepsilon)^2 \mu(q_j,v_j,\eta_j)\leq \mu(q_j,w_j,\xi_j).
	\end{equation}
The stepsize of the algorithm EALH is defined such that
	$$d_\HS(q_j,q_{j+1}) \leq  \frac{\varepsilon(1-\varepsilon)^4  \chi \Theta}{4\; d^{2} \sqrt{n}\; \mu(q_j,w_j,\xi_j)^2}.$$
Using \cref{ALH1.11} we get $d_\HS(q_j,q_{j+1}) \leq \frac{\varepsilon \chi  \Theta}{4\; d^{2} \sqrt{n}\; \mu(q_j,v_j,\eta_j)^2}$ from this. In consequence, \cref{ALH-lemma} implies \begin{equation}\label{hallohallo}
d_\HP((v_j,\eta_j),(v_{j+1},\eta_{j+1})) \leq \frac{\varepsilon\chi}{4d^{2} \sqrt{n}\mu(q_j,v_j,\eta_j)}.
\end{equation}
Using that $2d\;\mu(q_j,v_j,\eta_j)\geq 1$, $d\geq 2$, $\Theta<1$ and $\chi<1$ we get
	\begin{equation*}
	d\,\mu(q_j,v_j,\eta_j) \, \max\set{ d_\HS(q_j,q_{j+1}),4\;d\sqrt{n}\; d_\HP((v_j,\eta_j),(v_{j+1},\eta_{j+1}))} \leq \varepsilon,
	\end{equation*}
so that \cref{lipschitz3} implies
	\begin{equation}\label{ALH1.5}
	 \Theta\; \mu(q_j,v_j,\eta_j)\leq\mu(q_{j+1},v_{j+1},\eta_{j+1}) \leq \frac{1}{\Theta}\; \mu(q_j,v_j,\eta_j).
	\end{equation}
We use the triangle inequality to get
	$$
	d_\HP((w_j,\xi_j),(v_{j+1},\eta_{j+1}))
	\leq \;d_\HP((w_j,\xi_j),(v_j,\eta_j))+d_\HP((v_j,\eta_j),(v_{j+1},\eta_{j+1})).$$
Applying \cref{hallohallo} to this inequality and our assumption, that \cref{ALH1.1} holds for $i$, yields
  \begin{align} 	d_\HP((w_j,\xi_j),(v_{j+1},\eta_{j+1}))\leq \;\frac{\varepsilon(1+\chi)}{4\;d^2\sqrt{n}\; \mu(q_j,v_j,\eta_j)}
  = \frac{\varepsilon \Theta}{2\;d^2\sqrt{n}\; \mu(q_j,v_j,\eta_j)},\label{ALH1.6}
	\end{align}
so that
	\begin{align*}
	d^2\sqrt{2n}\,  \mu(q_{j+1},v_{j+1},\eta_{j+1}) d_\HP((w_j,\xi_j),(v_{j+1},\eta_{j+1})&)
	 \leq \frac{\varepsilon \Theta}{\sqrt{2}}\, \frac{\mu(q_{j+1},v_{j+1},\eta_{j+1})}{\mu(q_j,v_j,\eta_j)}\stackrel{\cref{ALH1.5}}{\leq}  \frac{\varepsilon}{\sqrt{2}};
	\end{align*}
Together with \cref{Theorem_4.3} this implies $\gamma(q_{j+1},v_{j+1},\eta_{j+1}) < 0.1$. Using that $d\geq 2$, from \cref{2dmu} (2) that $2d\,\mu(q_j,v_j,\eta_j)\geq 1$ and $\Theta< 1$ we further get   from \cref{ALH1.6} that
$$d_\HP((w_j,\xi_j),(v_{j+1},\eta_{j+1})) \leq \frac{\varepsilon}{2} =0.02.$$
Recall from \cref{u_and_delta} the definitions of $u(r)$ and $\delta(r)$. If we put $r=0.999933...$, we have $u(r)> 0.1$ and $\delta(r)= 0.02$. Using \cref{def_approx_eigenpair} and \cref{gamma_thm} we see that $(w_j,\xi_j)$ is an approximate eigenpair of $q_{j+1}$ with associated eigenpair $(v_{j+1},\eta_{j+1})$. Applying Newton's method with respect to $q_{j+1}$ to $(w_j,\xi_j)$ halves the distance to $(v_{j+1},\eta_{j+1})$, so
	\begin{align*}
	d_\HP((w_{j+1},\xi_{j+1}),(v_{j+1},\eta_{j+1})) &\hspace{0.15cm}\stackrel{\text{\cref{gamma_thm}}}{\leq} \frac{1}{2} \; d_\HP((w_{j},\xi_{j}),(v_{j+1},\eta_{j+1}))\\
	 &\hspace{0.7cm}\stackrel{\cref{ALH1.6}}{\leq}\frac{\varepsilon\,\Theta }{4\;d^2\sqrt{n}\, \mu(q_j,v_j,\eta_j)}\\
	 &\hspace{0.7cm}\stackrel{\cref{ALH1.5}}{\leq}\frac{\varepsilon }{4\,d^2\sqrt{n}\, \mu(q_{j+1},v_{j+1},\eta_{j+1})}
	 \end{align*}
which is \cref{ALH1.1} for $i=j+1$. The correctness of EALH follows from this.

It remains to estimate $K$, the number of iterations.
In the same way we deduced \cref{ALH1.5}, we can deduce that for any $0\leq i\leq K-1$ and $\tau_i\leq\tau\leq \tau_{i+1}$ we have $\Theta\; \mu(q_i,v_i,\eta_i)\leq\mu(q_{\tau},v_{\tau},\eta_{\tau})$. Hence
	\begin{equation*}
	\int_{\tau_i}^{\tau_{i+1}} \mu(q_\tau,v_\tau,\eta_\tau)^2\d \tau \geq   \int_{\tau_i}^{\tau_{i+1}} \Theta^2 \mu(q_i,v_i,\eta_i)^2 \d \tau = \Theta^2 \;\mu(q_i,v_i,\eta_i)^2\; (\tau_{i+1}-\tau_i)
	\end{equation*}
As the stepsize we have put $$(\tau_{i+1}-\tau_i) = \frac{\chi \,\varepsilon (1-\varepsilon)^4 \Theta(\varepsilon)}{ 4\alpha d^{2} \sqrt{n}\, \mu(q_i,w_i,\xi_i)^2}, \quad \text{where } \alpha=d_\HS(f,g).$$
Furthermore, \cref{ALH1.1} and \cref{lipschitz1} (2) imply that for any $0\leq i\leq K$ we have $$\vartheta(\varepsilon)\; \mu(q_i,w_i,\xi_i)\leq  \mu(q_i,v_i,\eta_i),$$
where $\vartheta(\varepsilon):=1-(1-\varepsilon)^{-2}+\cos(\tfrac{\varepsilon}{4})$. Thus
	$$\int_{\tau_i}^{\tau_{i+1}} \mu(q_\tau,v_\tau,\eta_\tau)^2\d \tau \geq  \frac{\Theta^2(1-\varepsilon)^4\varepsilon\vartheta(\varepsilon)^2\chi}{ 4 \; d^{2} \sqrt{n}\;d_\HS(f,g)},$$
which implies
	$$\int_{0}^{1} \mu(q_\tau,v_\tau,\eta_\tau)^2\d \tau \geq   K\; \frac{\Theta^2(1-\varepsilon)^4\varepsilon\vartheta(\varepsilon)^2\chi}{ 4\;  d^{2}\sqrt{n}\;d_\HS(f,g)},$$
For $\varepsilon=0.04$ we have $$\frac{1}{4}\Theta^2(1-\varepsilon)^4\varepsilon\vartheta(\varepsilon)^2\chi \geq  \frac{1}{246}$$ and therefore
	$$K\leq 246 \;d^2\sqrt{n}\;d_\HS(f,g)\; \int_{0}^{1} \mu(q_\tau,v_\tau,\eta_\tau)^2\d \tau $$
	as claimed.
\end{proof}
\subsection{Average analysis of algorithm LVEALH}\label{sec:avg_anal}
In this section we prove \cref{Theorem_4.6}. The proof will be rather short as most of the work has already been done in previous sections. Recall from \cref{notation_C} the notation
\begin{equation*}\cC(f,(g,v,\eta)) =\int_0^1\mu(q_\tau,v_\tau,\eta_\tau)^2 \d \tau,
\end{equation*}
Let $$\cC(f):=\mean_{(g,v,\eta)\sim\rho} \cC(f,(g,v,\eta))$$ and define, as in \cref{part2.2:f_av_dfn},
 	$$(\mu^2)_{\mathrm{av}}(f) := \frac{1}{\cD(n,d)} \int_{(f,v,\eta)\in \pi^{-1}(f)} \mu(f,v,\eta)^2\, \d(f,v,\eta);$$
Since the density $\rho$ is defined such that $g\sim N(\cH^n)$ and $(v,\eta)$ is chosen uniformly at random from the $\cD(n,d)$ many h-eigenpairs of $g$, we have
	\begin{equation*}
	\mean\limits_{f \sim\text{Unif}\;\HS(\cH^n)} \cC(f) =\mean\limits_{f \sim\text{Unif}\;\HS(\cH^n)}  \mean\limits_{g\sim N(\cH^n)} \int_{q\in E} \mu^2_{\mathrm{\;av}}(q)\,\d q,
	\end{equation*}
where $E\subset \HS(\cH^n)$ denotes the geodesic path between $f$ and $\Norm{g}^{-1}g$. Clearly, the integral is independent of the norm of $g$, so we may as well take the expectation over $g\sim\mathrm{Unif}\;\HS(\polspace)$. Furthermore, \cref{condition_for_eigenpairs} (3) implies that for all $q\in\cH^n$ we have $\mu^2_{\mathrm{\;av}}(q) \leq \hatmu^2_{\mathrm{\;av}}(q) $ and hence
 \begin{equation}\label{r6}
	\mean\limits_{f \sim\text{Unif}\;\HS(\cH^n)} \cC(f) \leq \mean\limits_{f,g \sim\text{Unif}\;\HS(\cH^n)}  \int_{q\in E} \hatmu^2_{\mathrm{\;av}}(q)\,\d q.
	\end{equation}
 The following is \cite[Eq. (4.3)]{Beltran2011} specialized to our scenario.
\begin{lemma}\label{beltran_pardo_lemma}
Let $\HS:=\HS(\polspace)$. For any measurable function $\phi:\HS\to\HR$ we have
$$\mean\limits_{f,g \sim\mathrm{Unif}(\HS)}\; \int_{h\in E} \phi(h)\, \d q  = \frac{\pi}{2} \mean\limits_{h\sim\mathrm{Unif}(\HS)} \;\phi(h),$$
where $E\subset\HS(\cH^n)$ denotes the geodesic path between $f$ and $g$.
\end{lemma}
Combining \cref{r6} with \cref{beltran_pardo_lemma} we get that
	\begin{equation*}
	\mean_{f \sim\mathrm{Unif}\;\HS(\cH^n)} \cC(f) \leq \frac{\pi}{2}\mean_{q \sim\text{Unif}\;\HS(\cH^n)}   \hatmu^2_{\mathrm{\;av}}(q)\leq \frac{40\pi\,nN}{d},
\end{equation*}
the last inequality by \cref{mu_expectation0}. Thus, \cref{Theorem_4.6} is proved.

\subsection{Analysis of the sampling method}
\label{sec:Theorem_4.7}
In this section we prove \cref{Theorem_4.7}. We show that Draw-from-$\rho^\star$ on the average needs $\cO(n^3+dnN)$ arithmetic operations (including drawing from $N(\polspace$)), where $N:=\dim_\HC(\polspace)$. "Average", because the success of the \textbf{if}-clause in line~9 of the algorithm is a random variable.

Step 3. of algorithm Draw-from-$\rho^\star$  can be done via Beltran-Pardo randomization \cite[Section~17.6]{condition}. By \cite[Proposition 17.21]{condition} this randomization can be implemented with $\cO(N)$ draws from $N(\HC)$ and $\cO(dnN)$ arithmetic operations.

For Step $5.$ draw $B\sim N(\HC^{(n-1)\times(n-1)})$. Let $Q$ be the $Q$-factor in the QR-decomposition of $B$. Then we put $U'=\mathrm{diag}(r_{ii}/\norm{r_{ii}})\,Q$, where the $r_{ii}$ are the diagonal elements of the $R$ factor in the QR decomposition of $B$. This algorithm yields $U'\sim \cU(n-1)$ uniformly at random (see, e.g., \cite[Sec. 10.1]{Armentano2015a}). Compute $U''\in\cU(n)$ with $U''e_1=v$, for instance via Gram-Schmidt algorithm. Putting $$U=\begin{bmatrix} v, & U'' \begin{bmatrix} 0\\U'\end{bmatrix}\end{bmatrix}$$ we have $U\in\cU(n)$, such that $Ue_1=v$, uniformly at random. The dominating complexity is the $QR$ decomposition, which requires $\cO(n^3)$ arithmetic operations. Summarizing we can implement Step $5.$ with  $\cO(n^3)$ arithmetic operations.

Given $v\in\HP(\HC^n)$ we can draw $a \sim N(v^\perp)$ by drawing $a'\sim N(\HC^{n-1})$ and then put $a=U(0,a')^T$, where $U$ is as above. This requires $\cO(n^2)$ arithmetic operations. An implementation for drawing $h\sim N(R(v))$ is  given in \cite[Algorithm 17.7]{condition}. Its number of arithmetic operations is $\cO(N)$. Hence, step 4. can be implemented using $\cO(N)$ operations.

For drawing $r$ with density $e^{-r}\;\mathbf{1}_{\set{r\geq 0}}(r)$, one can draw $z\sim N(\HC)$ and then put~$r=\Norm{z}^2$.

Altogether this shows that steps $3.$-$7$. require $\cO(n^3+dnN)$ arithmetic operations.

The number of times we have to execute steps $9.$-$14$. is imposed by the {\bf if}-clause in step~9. Let $E$ denote the expected number of iterations  and let \begin{equation}\label{C_from_if_statement}C:=\Prob\limits_{(\Hf,v),\eta}\set{\Norm{\restr{\deriv{\Hf}{v}}{v^\perp}+\eta^{d-1}\restr{I}{v^\perp}}_F \leq \Norm{\restr{\deriv{\Hf}{v}}{v^\perp}}_F}\end{equation}
be the probability of the {\bf if}-clause being true. For any $k\geq 1$ we have
	$$\Prob\set{ \text{The {\bf if}-statement is first true in the $k$-th iteration}} = (1-C)^{k-1}C,$$
which implies $E= \sum_{k=1}^\infty k (1-C)^{k-1}C = C^{-1}$. In \cref{C} below we show $C\geq (5\sqrt{\pi}\;n )^{-1}$. Observe that every iteration requires $\cO(1)$ many draws from~$N(\HC^n)$ and the computation of a derivative and its Frobenius norm, which can be done with $\cO(N)$ many arithmetic operations.

Summarizing, our implemention of algorithm Draw-from-$\rho^\star$ requires an expected number of $\cO(n^3+dnN)$ arithmetic operations.
\subsubsection{The probability of the if-clause being true.}\label{sec:rho_star}
Let $\Omega\subset \cH^{n-1}\times \HP(\HC^n)\times\HC\times \cU(n)\times \HC^n \times
 \cH$ be the probability space
\begin{align*}
	\Omega :=& \Big\{(\Hf,v,\eta,U,a,h)   \mid
	\Hf(v)=0,\; Ue_1=v,\;a\in v^\perp,\; h\in R(v)\Big\},
	\end{align*}
where the random variables $\Hf,v,\eta,U,a,h$ have the distribution that is induced by the algorith Draw-from-$\rho^\star$; that is,
$$(\Hf,v)\sim \rho_{\mathrm{BP}}, U\sim \mathrm{Unif} \cset{U\in\cU(n)}{Ue_1=v}, a\sim N(v^\perp), h\sim N(R(v)), \eta\sim\beta.$$
Here $\rho_{\mathrm{BP}}$ is the distribution from Beltr\'an Pardon randomization \cite{Beltran2011} and $\beta$ is the distribution of~$\eta$ as described in \cref{beta_density} below. Consider the subset
	\begin{equation*}
	\Omega^* = \cset{(\Hf,v,\eta,U,a,h)\in \Omega}{\Norm{\restr{\deriv{\Hf}{v}}{v^\perp} + \eta^{d-1}\restr{I}{v^\perp}}_F \leq \Norm{\restr{\deriv{\Hf}{v}}{v^\perp}}_F} \label{omega_star}
\end{equation*}
with $\restr{I}{v^\perp}$ being restriction of the identity to $v^\perp$. The $C$ from \cref{C_from_if_statement} is then given as $C=\Prob(\Omega^*).$ The plan is now to compute first the density of $\eta$ and to compute $\Prob(\Omega^*)$ afterwards.
\begin{lemma}\label{beta_density}
If we choose $r$ with density $e^{-r}\;\mathbf{1}_{\set{r\geq 0}}(r)$ and $\phi\in[0,2\pi)$ uniformly at random, putting $\eta:=r^\frac{1}{2(d-1)} \exp(i\phi)$ defines a random variable with density
	$$\beta(\eta):= \frac{(d-1)}{\pi}\;\norm{\eta}^{2(d-2)}\;\exp(-\norm{\eta}^{2(d-1)}).$$
\end{lemma}
\begin{proof}
Put $s:=r^\frac{1}{2(d-1)}$, such that we have $2(d-1)s^{2(d-2)+1}\d s = \d r$ and
$$\exp(-r)\;\mathbf{1}_{\set{r\geq 0}}(r) \d r = 2(d-1)s^{2(d-2)+1}\;\exp(-s^{2(d-1)}) \;\mathbf{1}_{\set{s\geq 0}}(s) \d s.$$
We have defined the argument of the random variable $\eta$ as being uniform distributed in $[0,2\pi)$. Changing from polar to euclidean coordinates $\eta := s \exp(i\phi)$ we get the density of $\eta$ given by $$\frac{1}{\pi}\;(d-1)\norm{\eta}^{2(d-2)}\;\exp(-\norm{\eta}^{2(d-1)}) \;\d \eta,$$
which shows the assertion.
\end{proof}
We can now prove the asserted bound on $C$.
\begin{lemma}\label{C} We have $$C=\Prob(\Omega^*)\geq  \frac{1}{5\sqrt{\pi}\,n}.$$
  \end{lemma}
  \begin{proof}
In the proof of \cite[Lemma 17.18]{condition} it is shown that the density $\rho_\mathrm{PB}$ is invariant under unitary transformations. Moreover, by \cite[Lemma 17.18 for linear systems]{condition}  and \cite[Lemma~17.19]{condition} the push-forward distribution of $\rho_\mathrm{PB}$ under the map $(\Hf,v)\mapsto \sqrt{d}^{\;-1}\deriv{\Hf}{v} \in\HC^{(n-1)\times n}$ is the complex standard normal distribution on $\HC^{(n-1)\times n}$. Writing
  $A:=\eta^{d-1}\restr{I}{e_1^\perp}.$
we have that
\begin{align*}
\Prob(\Omega^*)
&=\mean\limits_{\eta}\;\Prob\limits_{(\Hf,v)}\set{\Norm{\restr{\deriv{\Hf}{e_1}}{e_1^\perp} + A}_F \leq \Norm{\restr{\deriv{\Hf}{e_1}}{e_1^\perp}}_F}\\
&=\mean\limits_{\eta}\;\Prob\limits_{Z \sim N(\HC^{(n-1)\times (n-1)})}\set{\Norm{Z - \sqrt{d}^{\,-1}A}_F \leq \Norm{Z}_F},
\end{align*}
Note that $\Norm{A}_F\geq \lVert\sqrt{d}^{\,-1} A\rVert_F.$
From \cref{norm_inequality_gaussian} (1) get
\begin{align*}
\Prob(\Omega^*)\geq \mean\limits_{\eta}\;\Prob\limits_{Z}\set{\Norm{Z +A}_F \leq \Norm{Z}_F}.
\end{align*}
Then, using that $\Norm{A}_F= \sqrt{n-1}\, \lvert\eta\rvert^{d-1}$ and \cref{norm_inequality_gaussian} (2) we get
  	\begin{align*}
  	\Prob(\Omega^*)\geq \mean\limits_{\eta}\; \frac{1}{\sqrt{\pi}} \frac{2\exp\big(\frac{-(n-1)\norm{\eta}^{2(d-1)}}{2}\big)}{\sqrt{n-1}\norm{\eta}^{d-1} + \sqrt{(n-1)\norm{\eta}^{2(d-1)}+8}}.
  \end{align*}
Put $r:=\norm{\eta}^{2(d-1)}$. By definition $r$ has density $e^{-r}\;\mathbf{1}_{\set{r\geq 0}}(r)$. This shows
  \begin{align*}
  	\Prob(\Omega^*)\geq  \frac{2}{\sqrt{\pi}}\int_{r\geq 0} \frac{1}{\sqrt{(n-1)r} + \sqrt{(n-1)r+8}} \;\exp\big(-r\big(\tfrac{(n-1)}{2}+1\big)\big) \d r.
  	\end{align*}
Making a change of variables $t:=(n-1)r$ the latter expression becomes
  	\begin{align*}
  	 &\frac{2}{\sqrt{\pi}\,(n-1)}\int_{t\geq 0} \frac{1}{\sqrt{t} + \sqrt{t+8}} \;\exp\big(-t(\tfrac{1}{2}+\tfrac{1}{n-1})\big)\; \d t
  	 \end{align*}
Using $n\geq 2$ we have $\tfrac{1}{2}+\tfrac{1}{n-1}\leq \tfrac{3}{2}$, so that
$$	\Prob(\Omega^*)\geq \frac{2}{\sqrt{\pi}(n-1)}\int_{t\geq 0} \frac{1}{\sqrt{t} + \sqrt{t+8}} \;\exp\big(-\tfrac{3}{2}t\big)\; \d t.$$
A computer based calculation shows $\int_{t\geq 0} \frac{1}{\sqrt{t} + \sqrt{t+8}} \;\exp\left(-\frac{3}{2} t\right)\; \d t \geq \frac{1}{10}$. This finishes the proof.
  \end{proof}
\subsection{Average analysis of algorithm LVEALHWS}
In this section we prove \cref{Theorem_4.8}. To this end, let
\begin{equation}\label{phi_for_prop}
\phi: \hV\to \HR,\; (g,v,\eta)\mapsto \mean\limits_{f\sim\mathrm{Unif}\;\HS(\polspace)}\,\cC \,\left(f,(\Norm{g}^{-1} g,v,\Norm{g}^\frac{-1}{d-1}\eta)\right).
\end{equation}
By \cref{2dmu} (4), $\phi$ is unitarily invariant, that is,  $\phi(g,v,\eta)=\phi(U.(g,v,\eta)).$ for all $U\in\cU(n)$. Further, by definition $\phi$ is scale invariant: for all $s\in\HC\backslash\set{0}$ we have $\phi(g,v,\eta)=\phi(s^{d-1}g,v,s\eta)$.
\begin{proof}[Proof of \cref{Theorem_4.8}]
Apply \cref{hallo1000} below to the function $\phi$ from \cref{phi_for_prop}.
\end{proof}
The proof of the following proposition is adapted from \cite[Section 10.2]{Armentano2015a}.
\begin{prop}\label{hallo1000}
Let $\Xi: \hV \to \HR$ be a measurable, unitarily invariant, scale invariant nonnegative function. Then $$\mean\limits_{(f,v,\eta)\sim\rho^\star} \Xi(f,v,\eta) \leq 10\sqrt{\pi}\,n\,\mean\limits_{(f,v,\eta)\sim\rho} \Xi(f,v,\eta).$$
\end{prop}
\begin{proof}
While the proof itself is very technical, its underlying idea is explained quickly: Recall from \cref{omega_star} the definition of $\Omega^*$. The idea is to show that on $\Omega^*$ we have the inequality $\rho^\star\leq \frac{\rho}{\Prob(\Omega^*)}$, so that we can have the following bound for a random variable $X$ on $\Omega$:
	$$\mean_{X\sim \rho^\star}[X] \leq \int_{\Omega^*}  X\,\frac{\rho}{\Prob(\Omega^*)}\; \d \Omega^* \leq \frac{1}{\Prob(\Omega^*)}\int_{\Omega}  X\,\rho\; \d \Omega=\frac{\mean\limits_{X\sim\rho}[X]}{\Prob(\Omega^*)}. $$
\cref{hallo1000} is then implied using \cref{C}. Denote the characteristic function of~$\Omega^*$ by
$$\mathbf{1}_{\Omega^*}:=\mathbf{1}_{\Omega^*}(\Hf,v,\eta).$$
As before we put~$C:=\Prob(\Omega^*)$. By construction, the density~$\rho^\star$ is the density of the \emph{conditional distribution on $\Omega^*$}, associated to the probability measure ${\Prob}^*(Y) = C^{-1}\, \mean_{(\Hf,v),a,h,U,\eta}\; \mathbf{1}_Y$ for measurable $Y\subset \Omega^*$.
We therefore have
	\begin{equation}\label{rhostar1.1a}
	\mean\limits_{(f,v,\eta)\sim\rho^\star} \Xi(f,v,\eta) = \frac{1}{C} \mean\limits_{(\Hf,v),a,h,U,\eta} \Xi(f,v,\eta)\;\mathbf{1}_{\Omega^*},
	\end{equation}
where (recall that $R(v):=\cset{h\in\cH_{n,d}}{h(v)=0,\deriv{h}{v}=0}$)
$$(\Hf,v)\sim \rho_{\mathrm{BP}}, U\sim \mathrm{Unif} \cset{U\in\cU(n)}{Ue_1=v}, a\sim N(v^\perp), h\sim N(R(v)), \eta\sim\beta.$$
By assumption $\Xi$ is unitarily invariant and it is easily seen that~$\mathbf{1}_{\Omega^*}$ is unitarily invariant, too. The function $\Xi$ is nonnegative and therefore by Tonelli's theorem the expectation in \cref{rhostar1.1a} is independent of the order in which we integrate, so that
\begin{equation}\label{rhostar1.1}
\mean\limits_{(f,v,\eta)\sim\rho^\star} \Xi(f,v,\eta) = \frac{1}{C} \mean\limits_{U,a,h,\eta}\;\mean\limits_{(\Hf,v)\sim \rho_{\mathrm{PB}}} \Xi(f,v,\eta)\;\mathbf{1}_{\Omega^*},
\end{equation}
For a moment we consider the inner expectation. Let $\cZ(v):=\cset{B\in\HC^{(n-1)\times n}}{Bv =0}$
and put $B:=\sqrt{d}^{\, -1}\deriv{\Hf}{v}$. Note that, since $\Hf(v)=0$, we have $B\in \cZ(v)$. Decomposing $\cH^{n-1}$ orthogonally as in the preliminary section \cref{sec:Preliminaries}, we see that $\Hf$ is determined by $B,v$ and $k\in R(v)^{n-1}$. By \cite[Algorithm 17.6]{condition} we have $k\sim N(R(v)^{n-1})$. Moreover, by \cite[Lemma 17.19]{condition} the pair $(B,v)$ follows the so called \emph{standard distribution} on the linear solution manifold $$W=\cset{(B,v)\in\HC^{(n-1)\times n} \times \HP(\HC^n)}{B\in\cZ(v)}.$$
The equation before \cite[(17.20)]{condition} says that the standard distribution  has the density
  $$\rho_{\mathrm{st}}(B,v)= \varphi(B) \,\mathrm{NJ}p_1(B,v),$$
where $\varphi(B)$ is the density of $N(\HC^{(n-1)\times n})$, $p_1$ is the projection onto the first factor and $\mathrm{NJ}p_1(B,v)$ is the \emph{normal jacobian} of $p_1$ at $(B,v)$; see, e.g.,  \cite[Section 17.3]{condition}.

By the foregoing, for fixed $U,a,h,\eta$ we have
$$\mean\limits_{(\Hf,v)\sim \rho_{\mathrm{BP}}} \Xi(f,v,\eta)\;\mathbf{1}_{\Omega^*} = \int_{(B,v)\in W}  \mean_{k}\;\Xi(f,v,\eta)\,\mathbf{1}_{\Omega^*} \, \varphi(B)\,\mathrm{NJ}p_1(B,v)\, \d(B,v),$$
where $k\sim N(R(v)^{n-1})$. We use the coarea formula (e.g. \cite[Theorem 17.8]{condition}) on the projection $p_2:W\to \HP(\HC^n)$ to deduce that $\mean_{(\Hf,v)} \Xi(f,v,\eta)\;\mathbf{1}_{\Omega^*}$ is equal to
\begin{align*}
  \int_{v\in \HP(\HC^n)} \bigg(\int_{(B,v) \in p_2^{-1}(v)}  \; \mean_{k}\; \Xi(f,v,\eta)\,\mathbf{1}_{\Omega^*}\, \varphi(B)\,\frac{\mathrm{NJ}p_1(B,v)}{\mathrm{NJ}p_2(B,v)}\,\d p_2^{-1}(v) \bigg) \d v.
\end{align*}
We use the characterization from \cite[Lemma 17.14]{condition}:
$\mathrm{NJ}p_1(B,v)= \det(BB^*)\,\mathrm{NJ}p_2(B,v).$ Note that $(B,v)\in p_2^{-1}(v)$ is equivalent to $B\in \cZ(v)$, and that for all matrices $B \in \cZ(v)$ we have $\det(BB^*)=\det (\restr{B}{v^\perp})^2$. Interchanging integration over $k$ and $B$ we see that
	\begin{align*}
 \mean_{(\Hf,v)} \Xi(f,v,\eta)\;\mathbf{1}_{\Omega^*} = \int_{v\in\HS(\HC^n)}\; \mean_{k}\; \Xi(f,v,\eta)\,\mathbf{1}_{\Omega^*} \Big(\int_{B\in\cZ(v)}\,\det ( \restr{B}{v^\perp})^2 \varphi(B)\d B\Big)\,\d v.
	\end{align*}
Taking expectation over all random variables, by \cref{rhostar1.1a}, we therefore have
\begin{align*}
&\mean\limits_{(f,v,\eta)\sim\rho^\star} \Xi(f,v,\eta)
=  \frac{1}{C}\mean_{U,a,h,\eta}  \int_{v\in\HS(\HC^n)}\; \mean_{k}\; \Xi(f,v,\eta)\,\mathbf{1}_{\Omega^*}\; \Big(\int_{B\in\cZ(v)}\,\det ( \restr{B}{v^\perp})^2 \varphi(B)\d B\Big)\,\d v.
\end{align*}
By Tonelli's theore the latter is equal to
\begin{align}\label{hallo5}
\frac{1}{C}\int_{v\in\HS(\HC^n)}\;\mean_{U,a,h,\eta,k}  \Xi(f,v,\eta)\,\mathbf{1}_{\Omega^*}\; \Big(\int_{B\in\cZ(v)}\,\det ( \restr{B}{v^\perp})^2 \varphi(B)\d B\Big)\,\d v.
\end{align}
To keep track, recall that we have
  $$f= U\begin{bmatrix}\Hf_0\\ \Hf\end{bmatrix}, \text{ with } \begin{bmatrix}\Hf_0\\ \Hf\end{bmatrix}=\eta^{d-1}X_1^de_1 + X_1^{d-1}  \sqrt{d}\begin{bmatrix} a^T\\ B \end{bmatrix} (X_2,\ldots,X_n)^T + \begin{bmatrix} h\\ k \end{bmatrix}.$$
By unitary invariance, the integrand in \cref{hallo5} is invariant of $v$ and $U$. In that integral we may replace $v$ by $e_1$ and integrate over $U$ and $v$ so that
\begin{align*}
&\mean\limits_{(f,v,\eta)\sim\rho^\star} \Xi(f,v,\eta)
=  \frac{\mathrm{vol}\, \HP(\HC^n)}{C}\mean_{a,h,\eta,k}  \Xi(f,e_1,\eta)\,\mathbf{1}_{\Omega^*}\; \Big(\int_{B\in\cZ(e_1)}\,\det ( \restr{B}{v^\perp})^2 \varphi(B)\d B\Big),
\end{align*}
where the random variables are $a\sim N(\set{0}\times\HC^{n-1}),\; h\sim R(e_1),\; k\sim R(e_1)^{n-1}$ and $\eta\sim \beta.$ Observe that $\cZ(e_1) = \cset{B\in \HC^{(n-1)\times n}}{Be_1 = 0}$ is equal to $\cset{[0,M]}{M\in\HC^{(n-1)\times (n-1)}}$ and that $\restr{[0,M]}{e_1^\perp}=M$.  Moreover, note that on $\cZ(e_1)$ we have $$\varphi([0,M])=\varphi_{\HC^{n-1}}(0)\varphi_{\HC^{(n-1)\times (n-1)}}(M)=\pi^{-(n-1)} \varphi_{\HC^{(n-1)\times (n-1)}}(M).$$
Altogether, this shows that
$$\int_{B\in\cZ(e_1)}\,\det ( \restr{B}{v^\perp})^2 \varphi(B)\d B = \frac{1}{\pi^{n-1}}\,\mean\limits_{M\sim N(\HC^{(n-1)\times (n-1)})} \det(M)^2,$$
so that
\begin{equation}
	\mean\limits_{(f,v,\eta)\sim\rho^\star} \Xi(f,v,\eta)=\frac{\mathrm{vol}\,\HP(\HC^n)}{\pi^{n-1}\;C}\;\mean\limits_{a,h,\eta,M,k} (\det M)^2\; \Xi(f,e_1,\eta)\mathbf{1}_{\Omega^*},\label{rhostar3}
	\end{equation}
where
$k\sim N(R(e_1)^{n-1}), M\sim N(\HC^{(n-1)\times (n-1)}), a\sim N(\HC^{n-1}), h\sim N(R(e_1))$ and $\eta\sim \beta$.
Let us fix all the random variables but $M$ now, and write the expectation over $M$ as the integral
\begin{equation*}\mean\limits_{M} \;(\det M)^2\; \Xi(f,e_1,\eta)\mathbf{1}_{\Omega^*}=\frac{1}{\pi^{(n-1)^2}}\int_M (\det M)^2\; \Xi(f,e_1,\eta)\mathbf{1}_{\Omega^*} \varphi(M) \d M,\end{equation*}
where $\varphi$ is the density of $N(\HC^{(n-1)\times (n-1)})$. Recall that $[0,M]=\sqrt{d}^{-1}\deriv{\Hf}{e_1}$ and that
$$\mathbf{1}_{\Omega^*}(\Hf,e_1,\eta)=1,  \text{ if and only if } \Norm{\restr{\deriv{\Hf}{e_1}}{e_1^\perp}+ \eta^{d-1}\restr{I}{e_1^\perp}} \leq \Norm{\restr{\deriv{\Hf}{e_1}}{e_1^\perp}}.$$
Note that $\restr{I}{e_1^\perp}$ is the $(n-1)\times(n-1)$ identity matrix $I_{(n-1)\times (n-1)}$, which---abusing notation---we also abbreviate by $I$. Setting $A:= M + \sqrt{d}^{-1}\eta^{d-1}I$ we get $\Norm{A}_F^2 = \Vert M+\sqrt{d}^{\;-1}\;\eta^{d-1} I \Vert_F^2$ and
\begin{align}\nonumber
\mathbf{1}_{\Omega^*}(M,\eta)=1&\stackrel{\text{by definition}}{\Longleftrightarrow}  \Norm{M+\sqrt{d}^{\;-1}\;\eta^{d-1} I }_F^2 \leq \Norm{M}_F^2\\
&\hspace{0.4cm}\Longleftrightarrow  \quad \exp(-\Norm{M}_F^2) \leq \exp(-\Norm{A}_F^2).\label{important_inequality}
\end{align}
Using also that $\mathbf{1}_{\Omega^*}(M,\eta)\leq 1$, we get
	\begin{align}
	&\mean\limits_{M}(\det M)^2 \Xi(f,e_1,\eta)\mathbf{1}_{\Omega^*}
	\leq \mean\limits_{A}\;(\det (A-\sqrt{d}^{-1}\eta^{d-1}I))^2 \Xi(f,e_1,\eta),\label{important_inequality2}
\end{align}
where $A\sim N(\HC^{(n-1)\times (n-1)})$. Plugging this into \cref{rhostar3} yields
\begin{align}\nonumber
	\mean\limits_{(f,v,\eta)\sim\rho^\star} \Xi(f,v,\eta)
  \leq\;&\frac{\mathrm{vol} \,\HP(\HC^n)}{\pi^{n-1}C}\; \mean\limits_{a,h,\eta,A}\;(\det (A-\sqrt{d}^{-1}\eta^{d-1}I))^2\; \Xi(f,e_1,\eta)\\
  \leq\;&\frac{\mathrm{vol} \,\HS(\HC^n)}{2\pi^{n}d^{n-1}\,C}\; \mean\limits_{a,h,\eta,k,A}\;(\det (\sqrt{d}A-\eta^{d-1}I))^2\; \Xi(f,e_1,\eta),\label{rhostar5}
\end{align}
where we have used $2\pi\,\mathrm{vol}\, \HP(\HC^n)=\mathrm{vol}\,\HS(\HC^n).$
We use \cref{beta_density} to write the integral over $\eta$ explicitly, so that \cref{rhostar5} becomes
  \begin{align}
    &\mean\limits_{(f,v,\eta)\sim\rho^\star} \Xi(f,v,\eta)\leq\frac{\mathrm{vol} \,\HS(\HC^n)}{2\pi^{n}d^{n-1}C} \;\int_{\eta\in\HC} \widehat{E}(\eta) \; \frac{(d-1)}{\pi}\; \norm{\eta}^{2(d-2)}\; e^{-\norm{\eta}^{2(d-1)}} \d \eta;\label{xxxxxxxxx}
\end{align}
where $$\widehat{E}(\eta):=\mean\limits_{a,h,k,A} \;(\det (\sqrt{d}A-\eta^{d-1}I))^2\; \Xi(f,e_1,\eta)$$
with
  $$A \sim N(\HC^{(n-1)\times (n-1)}),\; a\sim N(\HC^{n-1}),\; \begin{bmatrix} h\\ k \end{bmatrix}\sim N(R(e_1)^{n}).$$
Denote by $\widehat{\Xi}:\hatV \to \HR$ the map that that satisfies $\widehat{\Xi} = \Xi \circ \Pi$, where $\Pi:\hatV \to \hV$ is the canonical projection.
By \cref{part2.2:integration} we can identify
\begin{equation}
\frac{\mathrm{vol}\;\HS(\HC^n)}{2\pi^{n+1}\,\cD(n,d)} \int_{\eta\in\HC}(d-1)^2\widehat{E}(\eta) \;  \norm{\eta}^{2(d-2)}\; e^{-\norm{\eta}^{2(d-1)}} \d \eta=\mean\limits_{f\sim N(\polspace)}\widehat{\Xi}_\mathrm{av}(f)\label{hallo12345}
\end{equation}
where, as in \cref{part2.2:f_av_dfn2},
  \begin{equation}\label{myspaceSintegral} \widehat{\Xi}_\mathrm{av}(f)=\frac{1}{2\pi\cD(n,d)}\; \int_{(f,v,\eta)\in \hatpi^{-1}(f)} \widehat{\Xi}(f,v,\eta) \;\d(f,v,\eta).\end{equation}
If we replace in \cref{myspaceSintegral} the integration  over $\myspaceS$ by integration over $\cP$ we get
 $$\widehat{\Xi}_\mathrm{av}(f)=\frac{1}{\cD(n,d)}\; \int_{(f,v,\eta)\in \pi^{-1}(f)} \Xi(f,v,\eta) \;\d(f,v,\eta) ,$$
In \cref{part2.2:f_av_dfn} we denoted the last integral by $ \Xi_\mathrm{av}(f)$.
This we plug into \cref{hallo12345} and then apply the resulting equation to \cref{xxxxxxxxx} and get
$$\mean\limits_{(f,v,\eta)\sim\rho^\star} \Xi(f,v,\eta)\leq \frac{\cD(n,d)}{d^{n-1}(d-1)C} \mean\limits_{f\sim N(\polspace)}\Xi_\mathrm{av}(f).$$
Note that, since $d\geq 2$, we have
$$\frac{\cD(n,d)}{d^{n-1}(d-1)} = \frac{d^n-1}{d^{n-1}(d-1)} =\sum_{i=0}^{n-1}\frac{1}{d^i} \leq \sum_{i=0}^{\infty}\frac{1}{2^i} = 2 .$$
Moreover, by \cref{C}, $C\geq (5\sqrt{\pi}\,n)^{-1}$. Since $\Xi(f,v,\eta)$ is invariant under scaling of $f$, also $\Xi_\mathrm{av}$ is scale invariant. This implies
  $\mean_{f\sim N(\cH^n)}\Xi_\mathrm{av}(f)=\mean_{f\sim \mathrm{Unif}\,(\cH^n)}\Xi_\mathrm{av}(f)$; see, e.g.,  \cite[Corollary~2.23]{condition}.
By \cref{forward_distribution}  we have $\mean_{f\sim \mathrm{Unif}\,(\cH^n)}\Xi_\mathrm{av}(f) =  \mean_{(f,v,\eta)\sim \rho} \Xi(f,v,\eta)$, which finally yields
$$\mean\limits_{(f,v,\eta)\sim\rho^\star} \Xi(f,v,\eta)\leq 10\sqrt{\pi}\, n \;\mean\limits_{(f,v,\eta)\sim \rho} \Xi(f,v,\eta).$$
This finishes the proof.
\end{proof}
\section{Proof of Proposition \ref{Proposition_1.4}}\label{sec_proof_ill_posed}
By \cite[Proposition 16.10]{condition} the condition number of polynomial equation solving is given by
  $$\mu(\cF_f,(v,\eta))= \Norm{\cF_f}\,\Norm{(v,\eta)}^{d-1}\,\Norm{\big(\restr{\deriv{\cF_f}{(v,\eta)}}{(v,\eta)^\perp}\big)^{-1}}.$$
In \cref{2dmu} (1) we have shown that
   $$\Norm{\big(\restr{\deriv{\cF_f}{(v,\eta)}}{z_t^\perp}\big)^{-1}} \geq \frac{1}{d\,\left(\Norm{f}+\max\set{\norm{\eta}^{d-1},\norm{\eta}^{d-2}}\right)}$$
  so that
  \begin{equation}\label{part2.2:steps_ALH}
  \mu(\cF_f,(v,\eta))\geq  \frac{\Norm{\cF_f}\,\Norm{(v,\eta)}^{d-1}}{d\,\left(\Norm{f}+\max\set{\norm{\eta}^{d-1},\norm{\eta}^{d-2}}\right)}.
  \end{equation}
The norm is $\cF_f$ is given by
   $\Norm{\cF_{f}}^2=\Norm{f(X)-\ell^{d-1} X}^2 = \Norm{f}^2+\Norm{\ell^{d-1}X}^2 = \Norm{f}^2 + \frac{n}{d}.$
Moreover, since $f(v)=\eta^{d-1} v$ we have
  \begin{equation}\label{hallo8}\Norm{\eta}^{2(d-1)}\Norm{v}^2 = \Norm{f(v)}^2 \leq \Norm{f}^2 \Norm{v}^{2d};\end{equation}
the inequality by \cite[Lemma 16.5]{condition}. Because $\Norm{v}=1$, we have $\Norm{\eta}^{d-1}\leq \Norm{f}$. This implies
   \begin{equation}\label{part2.2:steps_ALH8}
   \frac{\Norm{f}^2 + \frac{n}{d}}{\left(\Norm{f}+\max\set{\norm{\eta}^{d-1},\norm{\eta}^{d-2}}\right)^2} \geq \frac{\Norm{f}^2}{\left(\Norm{f}+\max\set{\norm{\eta}^{d-1},\norm{\eta}^{d-2}}\right)^2}\geq \frac{1}{4}
   \end{equation}
and hence
$$\frac{\Norm{\cF_f}}{d\,\left(\Norm{f}+\max\set{\norm{\eta}^{d-1},\norm{\eta}^{d-2}}\right)}\geq \frac{1}{2d}.$$
Plugging into \cref{part2.2:steps_ALH} yields
\begin{equation}\label{hallo77}
   \mu(\cF_f,(v,\eta))\geq  \frac{\Norm{(v,\eta)}^{d-1}}{2d}.
 \end{equation}
By assumption, $(v,\eta)$ is chosen uniformly at random from the $\cD(n,d)$ many h-eigenpairs of~$f$. This implies that $\lambda:=\eta^{d-1}$ is an eigenvalue of~$f$ that is chosen uniformly from all of its eigenpairs. Theorem 1.1 from \cite{distr} implies that
$$\Prob\set{\norm{\lambda} >1}
  = \frac{d}{d^n-1} \sum_{k=1}^n d^{n-k} \Prob\limits_{X\sim \chi^2_{2k}}\set{X>2}.$$
From this we deduce
\begin{align*}\Prob\set{\norm{\eta}^2 > 1}&=\Prob\set{\norm{\eta}^{2(d-1)} > 1}
  =\Prob\set{\norm{\lambda}^2 >1}
  = \frac{d}{d^n-1} \sum_{k=1}^n d^{n-k} \Prob\limits_{X\sim \chi^2_{2k}}\set{X>2},
\end{align*}
Using $\Prob_{X\sim \chi^2_{2k}}\set{ X>2} \geq \Prob_{X\sim \chi^2_{2}}\set{ X>2} = e^{-1}$
we obtain $\Prob\set{\norm{\eta}^2\geq 1}>e^{-1}$ and, consequently,
\begin{align*}
\Prob\set{\Norm{(v,\eta)}^{2(d-1)}> 2^{d-1}}
=\Prob\set{(1+\norm{\eta}^2)^{d-1}>2^{d-1}}
=\Prob\set{\norm{\eta}^{2}>1}
> e^{-1} .
\end{align*}
Plugging this into \cref{hallo77} shows
$
   \Prob\set{\mu(\cF_f,(v,\eta))\geq  d^{-1}\sqrt{2}^{d-3}} > e^{-1} \approx 0.368.
$
 This finishes the proof. \qed
{\bibliographystyle{plain}
\bibliography{literature}}
\end{document}